\numberwithin{equation}{section}
\newtheorem{thm}{Theorem}[section]
\newtheorem{cor}[thm]{Corollary}
\newtheorem{lem}[thm]{Lemma}
\newtheorem{prop}[thm]{Proposition}
 \theoremstyle{definition}
\newtheorem{dfn}[thm]{Definition}
\newtheorem{ex}[thm]{Example}
\newtheorem{rem}[thm]{Remark}
\newtheorem{exr}[thm]{Exercise}
\DeclareMathOperator{\Aut}{Aut}
\newcommand{\C}{\mathbb{C}}
\newcommand{\R}{\mathbb{R}}
\newcommand{\Z}{\mathbb{Z}}
\newcommand{\Oo}{\mathcal{O}}
\newcommand{\X}{\mathcal{X}}
\newcommand{\Xo}{\mathcal{X}^\circ}
\newcommand{\Mb}{\mathbf{M}}
\newcommand{\Mbo}{\mathbf{M}^\circ}
\newcommand{\cu}{\Sigma}
\newcommand{\Bb}{\mathbf{B}}
\newcommand{\Bbo}{\mathbf{B}^\circ}
\newcommand{\Xt}{\tilde{\mathcal{X}}}
\newcommand{\pib}{\bm{\pi}}
\newcommand{\Hig}{\mathbf{Higgs}}
\newcommand{\Ct}{\tilde{C}}
\newcommand{\bLambda}{\bm{\Lambda}}
\newcommand{\blambda}{\bm{\lambda}}
\newcommand{\gfr}{\mathfrak{g}}
\newcommand{\tfr}{\mathfrak{t}}
\newcommand{\Hit}{\bm{h}}
\newcommand{\VH}{\mathsf{V}}
\newcommand{\Autd}{\operatorname{Aut}_D}
\newcommand{\aD}{\tau}
\newcommand{\Cc}{\mathbf{C}}
\newcommand{\ADE}{\mathrm{ADE}}
\newcommand{\wt}{\mathsf{wt}}
\newcommand{\qf}{\mathbf{q}}
\newcommand{\tC}{\tilde{\bm{C}}}
\newcommand{\Higgs}{\bm{Higgs}}
\begin{document}

\allowdisplaybreaks

\newcommand{\arXivNumber}{1809.05736}

\renewcommand{\thefootnote}{}

\renewcommand{\PaperNumber}{001}

\FirstPageHeading

\ShortArticleName{Aspects of Calabi--Yau Integrable and Hitchin Systems}

\ArticleName{Aspects of Calabi--Yau Integrable\\ and Hitchin Systems\footnote{This paper is a~contribution to the Special Issue on Geometry and Physics of Hitchin Systems. The full collection is available at \href{https://www.emis.de/journals/SIGMA/hitchin-systems.html}{https://www.emis.de/journals/SIGMA/hitchin-systems.html}}}

\Author{Florian BECK}

\AuthorNameForHeading{F.~Beck}

\Address{FB Mathematik, Universit\"at Hamburg, Bundesstrasse 55, 20146 Hamburg, Germany}
\Email{\href{mailto:florian.beck@uni-hamburg.de}{florian.beck@uni-hamburg.de}}

\ArticleDates{Received September 25, 2018, in final form December 19, 2018; Published online January 01, 2019}

\Abstract{In the present notes we explain the relationship between Calabi--Yau integrable systems and Hitchin systems based on work by Diaconescu--Donagi--Pantev and the \mbox{author}.
Besides a review of these integrable systems, we highlight related topics, for example variations of Hodge structures, cameral curves and Slodowy slices, along the way.}

\Keywords{complex integrable systems; Hitchin systems; variations of Hodge structures; Calabi--Yau threefolds}

\Classification{14H70; 14D07; 14J32}

\renewcommand{\thefootnote}{\arabic{footnote}}
\setcounter{footnote}{0}

\section{Introduction}
From a geometric viewpoint, complex integrable systems are holomorphic symplectic mani\-folds~$(\Mb,\omega)$ that admit a holomorphic map $\pi\colon \Mb\to B$ to another complex manifold~$B$ such that the generic fibers of $\pi$ are compact connected Lagrangian submanifolds. These are isomorphic to complex tori, i.e., quotients $V/\Gamma$ of a complex vector space $V$ by a lattice $\Gamma\subset V$ of full rank, by the complex version of the Arnold--Liouville theorem \cite[Chapter~2]{DM2}. A general holomorphic symplectic manifold does not admit such a structure. Therefore complex integrable systems play a special role in holomorphic symplectic geometry.

The first purpose of this article is to introduce two intricate infinite families of complex integrable systems, namely Calabi--Yau integrable systems and Hitchin systems. The former were constructed by Donagi--Markman~\cite{DM1} from any complete family $\pi\colon \mathcal{X}\to B$ of compact Calabi--Yau threefolds. The generic fibers are Griffiths' intermediate Jacobians~$J^2(X_b)$~\cite{GriffithsI} of the smooth fibers~$X_b$ of~$\pi$. These complex tori are a generalization of the Jacobian of a~compact Riemann surface. The geometry of $J^2(X)$ for a compact Calabi--Yau threefold $X$ is in general poorly understood unlike in the case of Fano threefolds~\cite{ClemensGriffiths}. In mathematics, Calabi--Yau integrable systems have close links to Deligne cohomology and algebraic cycles (see \cite{delAngelMS,DM1}, \cite[Section 7.8]{EsnaultViehweg}). In mathematical physics, they play an important role in $F$-theory \cite{Tbranes,Tbranes2} and geometric transitions~\cite{DDD}.

Hitchin systems were first discovered by Hitchin \cite{Hit2,Hit1} and have been extensively studied since then. They are constructed from a compact connected Riemann surface~$C$ (of genus $\geq 2$) together with a semisimple complex Lie group $G$ via the moduli space $\Hig(C,G)$ of $G$-Higgs bundles on~$C$ (Section~\ref{s:hit}). The latter carries a very rich geometry, in particular it is a Hyperk\"ahler manifold.
The Hitchin system $\Hit\colon \Hig(C,G)\to \Bb(C,G)$ itself has very surprising features. For example, the generic fibers of $\Hit\colon \Hig(C,G)\to \Bb(C,G)$ are dual as abelian varieties to the generic fibers of $^L\Hit(C,G)\colon \Hig\big(C,^L G\big)\to \Bb\big(C,^L G\big)$ \cite{DP,HauselThaddeus}\footnote{This make sense because $\Bb(C,G)\cong \Bb(C, ^L G)$ canonically.}. Here $^L G$ is the Langlands dual group of~$G$ (e.g., if $G={\rm SL}(n,\C)$, then $^L G={\rm PGL}(n,\C)$). Proving this duality requires an explicit description of the generic fibers in terms of certain branched coverings, namely cameral curves (introduced in~\cite{Donagi2}), of~$C$.

Even though these two complex integrable systems are a priori of a very different nature, we relate the two following the pioneering papers \cite{DDD,DDP} and our own work \cite{Beck, Beck2}. In the latter we developed a~Hodge-theoretic framework in which both Calabi--Yau integrable systems and Hitchin systems fit naturally.

The second purpose of this article is to motivate this framework and to sketch how it is used to relate Calabi--Yau integrable systems with Hitchin systems in a precise way (Theorem \ref{thm:nccyhit}). For that reason, we mostly outline the strategy of the proofs or omit them altogether and refer to \cite{Beck,Beck2} for further details\footnote{An exception is Corollary \ref{cor:exactness} which appeared neither in~\cite{Beck} nor~\cite{Beck2}.}.

\subsection{Structure of the notes}
We begin by introducing complex integrable systems in Section~\ref{s:hodge}, and study them from a~Hodge-theoretic viewpoint. We define non-degenerate complex tori and give the intermediate Jacobian~$J^2(X)$ of a compact Calabi--Yau threefold $X$ as an example. The former are equivalent to certain Hodge structures of weight~$1$. Generalizing to the relative setting, we arrive at variations of Hodge structures of weight~$1$. These give rise to complex integrable systems if they admit a special section, namely an abstract Seiberg--Witten differential (Proposition~\ref{thm:abSW}). With these methods we construct Calabi--Yau integrable systems from complete families of compact Calabi--Yau threefolds in a different way than originally done by Donagi--Markman~\cite{DM1}.

The second prime examples of integrable systems, $G$-Hitchin systems, are introduced in Section~\ref{s:hit} for any general (semi)simple complex Lie group~$G$. This requires some background material, in particular the introduction of the adjoint quotient of a~semisimple complex Lie algebra and cameral curves. The latter are a necessary tool to determine the isogeny (and isomorphism) classes of generic fibers of $G$-Hitchin systems in general. When $G\subset {\rm GL}(n,\C)$ is a~classical semisimple complex Lie group, they are closely related to the spectral curves introduced by Hitchin~\cite{Hit2, Hit1} for the same purpose. We give a detailed comparison between the two notions based on~\cite{Donagi2}.

In the final Section~\ref{s:relation}, Calabi--Yau integrable systems are related to Hitchin systems. The corresponding families of (non-compact) Calabi--Yau threefolds are constructed via Slodowy slices. We review them together with their relation to so-called $\Delta$-singularities where $\Delta$ is an irreducible Dynkin diagram. Finally, we state the precise relationship between (non-compact) Calabi--Yau integrable systems and $G$-Hitchin systems where the simple complex Lie group $G$ has Dynkin diagram~$\Delta$. Moreover, we recover the above mentioned Langlands duality statement of Hitchin systems via Poincar\'e duality (more precisely, Poincar\'e--Verdier duality~\cite[Chapter~3]{KashiwaraSchapira} applied to the families of (non-compact) Calabi--Yau threefolds.

\section{Hodge theory of integrable systems}\label{s:hodge}
\looseness=-1 The generic fibers of an algebraic integrable system are torsors for abelian varieties (see Defi\-nition~\ref{dfn:AIS} and Lemma~\ref{lem:AIS} below). It follows that the smooth part of an algebraic integrable system is a family of abelian varieties if it admits a (Lagrangian) section. Such families are intimately related to Hodge theory, more precisely variations of Hodge structures. In the following we explain this relation and how holomorphic symplectic structures fit into the Hodge-theoretic framework.

\subsection{Complex integrable systems}
At first we generalize the notion of an algebraic integrable system by relaxing the condition on the generic fibers. This is necessary because Calabi--Yau integrable systems, one of our prime examples of integrable systems, are of this form. We begin by fixing the definition of an algebraic integrable system.

\begin{dfn}[algebraic integrable system]\label{dfn:AIS} Let $(\Mb,\omega)$ be a holomorphic symplectic manifold and $B$ a connected complex manifold. A holomorphic map $\pi\colon \Mb\to B $ is called an algebraic integrable system if
\begin{enumerate}[label=\roman*)]\itemsep=0pt
\item $\pi$ is proper and surjective,
\item its smooth fibers are Lagrangian and connected,
\item its smooth part $\pi^\circ\colon \Mb^\circ\to B^\circ$ admits a relative polarization, i.e., a line bundle $\mathcal{L}\to \Mb$ such that $\mathcal{L}_{|\Mb_b}$ is ample for all $b\in B^\circ$.
\end{enumerate}
We call an algebraic integrable system \emph{smooth} if $\pi\colon \Mb\to B$ is a submersion so that $B^\circ=B$.
\end{dfn}
The definition implies the following well-known properties of algebraic integrable systems (see~\cite{DM2}).
\begin{lem}\label{lem:AIS} Let $\pi\colon (\Mb,\omega)\to B$ be an algebraic integrable system. Then its smooth part $\pi^\circ\colon \Mbo\to B^\circ$ is a torsor for a family $\mathcal{A}(\pi^\circ)\to B$ of polarized abelian varieties. In particular, $(\Mb_b,\mathcal{L}_{|\Mb_b})$ is non-canonically isomorphic to a polarized abelian variety for every $b\in B^\circ$.
\end{lem}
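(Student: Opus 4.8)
The plan is to realize each smooth fiber as a complex torus via the complex Arnold--Liouville theorem and then to use the relative polarization to upgrade these tori to abelian varieties. The first claim is the substantive one; the second follows from it by choosing a base point in a fiber. First I would exploit the symplectic form $\omega$ together with the Lagrangian condition to produce a canonical isomorphism between the vertical tangent bundle of $\pi^\circ$ and the pullback of the cotangent bundle of the base. Concretely, for $v$ tangent to a smooth fiber $\Mb_b$ and $w\in T_bB^\circ$ with arbitrary lift $\tilde w$ to $T\Mbo$, the assignment $v\mapsto\big(w\mapsto\omega(v,\tilde w)\big)$ is well defined because $\Mb_b$ is Lagrangian (so $\omega(v,u)=0$ for $u$ vertical, making the pairing independent of the lift), and it is an isomorphism $T(\Mbo/B^\circ)\xrightarrow{\sim}\pi^{\circ*}T^*B^\circ$ by non-degeneracy of $\omega$ and a dimension count.

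Next I would build the fiberwise group structure. Every local $1$-form $\alpha$ on $B^\circ$ determines, through the isomorphism above, a vertical vector field $X_\alpha$ on $\Mbo$. Using $d\omega=0$ and the Lagrangian condition one checks that the $X_\alpha$ associated to closed forms commute, so their flows generate a local action of the additive group $T^*_bB^\circ$ on each fiber; properness of $\pi$ makes the fibers compact, whence the flows are complete and the action is globally defined and transitive. The stabilizer of a point is a discrete subgroup $\Gamma_b\subset T^*_bB^\circ$, necessarily a lattice of full rank since its quotient is the compact connected fiber $\Mb_b$. Letting $b$ vary, the $\Gamma_b$ assemble into a local system $\Gamma\subset T^*B^\circ$, and I would set $\mathcal{A}(\pi^\circ):=T^*B^\circ/\Gamma$, a family of complex tori acting simply transitively on $\Mbo$. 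As there is in general no distinguished origin in a fiber, $\Mbo$ is a torsor for $\mathcal{A}(\pi^\circ)$ rather than a family of groups.

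Finally I would invoke the relative polarization to pass from complex tori to abelian varieties. By hypothesis $\mathcal{L}$ restricts to an ample line bundle on each smooth fiber $\Mb_b$, and the existence of such a bundle on a complex torus is equivalent, via Riemann's bilinear relations, to the torus being an abelian variety, with $c_1(\mathcal{L})_{|\Mb_b}$ furnishing the polarization. This identifies $\mathcal{A}(\pi^\circ)$ with a family of polarized abelian varieties and establishes the first claim. For the second claim I would fix any point $p\in\Mb_b$ as origin; translation by $p$ then yields the asserted non-canonical isomorphism of the torsor fiber $(\Mb_b,\mathcal{L}_{|\Mb_b})$ with the polarized abelian variety $\mathcal{A}(\pi^\circ)_b$.

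The hard part will be the Arnold--Liouville step: one must verify that the vector fields $X_\alpha$ genuinely commute and have complete flows, so that a transitive torus action is obtained rather than merely an infinitesimal or locally defined one, and that the resulting period lattice $\Gamma$ has full rank and varies holomorphically as a local system over $B^\circ$. The polarization step is comparatively formal once ampleness is fiberwise available.
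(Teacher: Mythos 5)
Your proposal is correct and follows essentially the same route as the paper: the Arnold--Liouville argument of flowing along commuting vertical (locally Hamiltonian) vector fields to exhibit each smooth fiber as a quotient of a vector space by the full-rank stabilizer lattice, then transporting the fiberwise ample class via the Riemann relations. The only cosmetic difference is that you work with $T^*B^\circ$ while the paper works with the vertical bundle $\mathcal{V}$; these are canonically identified by $\omega$, as the paper itself notes in the remark following the lemma.
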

\begin{proof}[Sketch of proof] Let $\mathcal{V}\to B^\circ$ be the vertical bundle of $\pi^\circ$, i.e., the bundle of vector fields on~$\Mb^\circ$ which are tangent to the fibers. Let $v,w\colon U\to \mathcal{V}$ be two local sections. If $U$ is small enough, then $v$, $w$ are Hamiltonian vector fields $v=X_{\pi^*f}$, $w=X_{\pi^*g}$ for functions $f,g\colon U\to \C$. Since the fibers of $\pi^\circ$ are Lagrangian, it follows that $[v,w]=X_{\omega(v,w)}=0$. Hence the flows of sections of $\mathcal{V}$ define a fiber-preserving action of the abelian group $(\mathcal{V},+)$ associated to the commutative Lie algebra $(\mathcal{V}, [\bullet,\bullet])$ on $\Mb^\circ$. The submanifold
\begin{gather*}
\Gamma=\big\{ v\in \mathcal{V}\,|\,\exists\, x\in \Mb^\circ \colon v\cdot x=x \big\}\subset \mathcal{V}
\end{gather*}
intersects each fiber in a full lattice. The relative polarization of $\pi^\circ\colon \Mb^\circ\to B^\circ$ is transported to $\mathcal{A}(\pi^\circ):=\mathcal{V}/\Gamma\to B^\circ$ which is therefore a family of abelian varieties acting simply transitively on $\pi^\circ\colon \Mb^\circ\to B^\circ$.
\end{proof}

\begin{rem}The holomorphic symplectic form $\omega$ induces an isomorphism $T^*B^\circ \to \mathcal{V}$ under which $\Gamma\subset \mathcal{V}$ becomes a Lagrangian submanifold of $T^*B^\circ$ with its canonical symplectic structure. At least locally, $T^*B^\circ/\Gamma\to B^\circ$ and $\pi^\circ\colon \Mb^\circ \to B^\circ$ are therefore isomorphic to each other.
\end{rem}

We now generalize algebraic integrable systems by allowing more general complex tori as generic fibers. Let $T=V/\Gamma$ be a complex torus where $\Gamma\subset V$ is a~full lattice in a vector space~$V$ of dimension $\dim_{\C}V=g$. We denote by $NS(T)$ the N\'eron--Severi group of~$T$. Then there is a~canonical isomorphism~\cite{BLAbelian}
\begin{align*}
NS(V/\Gamma)&\cong \big\{ H\colon V\otimes_{\C} \bar{V}\to \C~\mbox{Hermitian form}\,|\, \operatorname{im}(H)(\Gamma,\Gamma)\subset \Z \big\}, \\
c_1(L)&\mapsto H_L
\end{align*}
for $L\in \operatorname{Pic}(T)$. The Hermitian form $H_L$ satisfies the following two Riemann bilinear relations iff $L$ is ample:
\begin{enumerate}[label=(\Roman*)]\itemsep=0pt
\item \label{eq:RiemannI} $\operatorname{im}(H_L)\colon V_{\R}\otimes_{\R}V_{\R} \to \R$ is non-degenerate and satisfies $\operatorname{im}(H_L)(\Gamma,\Gamma)\subset \Z$,
\item \label{eq:RiemannII} $H_L(v,v)> 0$ for all $v\neq 0$.
\end{enumerate}

\newpage

\looseness=-1 To relax the second Riemann bilinear relation, we recall the index of a non-degenerate Hermitian form $H$ on a complex vector space $V$: Let $V^-\subset V$ be the maximal vector subspace such that the restriction of $H$ to $V^-$ is negative definite. Then the index of~$H$ is the dimension of~$V^-$.

\begin{dfn}\label{dfn:poltorus} A complex torus $T=V/\Gamma$ is a non-degenerate complex torus of index $k\geq 0$ if it admits a line bundle $L\to T$ such that the Hermitian form $H_L$ satisfies the first Riemann bilinear relation~\ref{eq:RiemannI} and is of index\footnote{The index of a non-degenerate Hermitian form $H$ on the complex vector space $V$ is the dimens.}~$k$.
\end{dfn}
\begin{rem}If $L\to T$ is a holomorphic line bundle of index $k$, then
\begin{gather*}
H^i(T,L)\begin{cases}
\neq 0, & i=k, \\
=0, & i\neq k,
\end{cases}
\end{gather*}
see \cite{BLTori}. If $k=0$, then $H^0(T,L)$ is generated by the corresponding theta functions.
\end{rem}

\begin{ex}Every complex torus $T$ with Picard number $\rho(T)=\operatorname{rk}(NS(T))=0$ is degenerate. Such tori exist in dimension $\geq 2$.
\end{ex}

\begin{ex}\label{ex:nondeg} Let $X$ be a compact K\"ahler manifold of dimension $\dim_{\C}X=m=2n-1$ and $H^{m}(X,\C)=\oplus_{p} H^{p,m-p}(X)$ the Hodge decomposition. Then Griffiths' intermediate Jacobian~\cite{GriffithsI} of $X$ is defined by
\begin{gather*}
J^n(X):=H^{2n-1}(X,\C)/\big( F^nH^{2n-1}(X,\C)+H^{2n-1}(X,\Z)\big),
\end{gather*}
where $F^nH^{m}(X,\C)=\oplus_{p\geq n} H^{p,m-p}(X)$, also see~(\ref{eq:Hdgfiltration}). It is a non-degenerate complex torus of index $k>0$ if the canonical bundle $K_X$ of $X$ is trivial, cf.\ Example~\ref{ex:j2}.
\end{ex}

\begin{dfn}[complex integrable systems of index $k$] A holomorphic map $\pi\colon (\Mb,\omega)\to B$ between a holomorphic symplectic manifold $(\Mb, \omega)$ and a connected complex manifold $B$ is called a complex integrable system of index~$k$, $k\geq 0$, if
\begin{enumerate}[label=\roman*)] \itemsep=0pt
\item $\pi$ is proper and surjective,
\item its smooth fibers are Lagrangian and connected,
\item its smooth part $\pi^\circ\colon \Mbo\to B^\circ$ admits a relative polarization of index $k\geq 0$.
\end{enumerate}
We call $\pi\colon \Mb\to B$ a smooth complex integrable system if $B^\circ=B$.
\end{dfn}
Clearly, an algebraic integrable system is a complex integrable system of index~$0$. We show in Section~\ref{s:cyis} that intermediate Jacobians~$J^2(X)$ of compact Calabi--Yau threefolds~$X$ define complex integrable systems of index $k>0$.

\subsection{VHS and smooth complex integrable systems}\label{ss:VHSsmooth}
From now on, we concentrate on \emph{smooth} complex integrable systems $\pi\colon (\Mb,\omega)\to B$ which admit a (Lagrangian) section $s\colon B\to \Mb$. In particular, $\pi\colon \Mb\to B$ is a family of complex tori.

\begin{rem}The requirement that the smooth complex integrable system $\pi\colon \Mb\to B$ admits a~section is not very restrictive because we can always work with the associated family $\mathcal{A}(\pi)\to B$ of polarized complex tori, cf.\ Lemma~\ref{lem:AIS}.
\end{rem}

To see how Hodge theory is related to smooth complex integrable systems, we begin with a~single complex torus $T\cong \C^g/\Gamma$ which is in particular a compact K\"ahler manifold with K\"ahler form $\omega_T=\sum\limits_{i=1}^g {\rm d}z_i \wedge {\rm d}\bar{z}_i$. It follows that for each $n=1,\dots, 2g$, the cohomology group~$H^n(T,\C)$ admits the Hodge decomposition
\begin{gather}\label{eq:Hk}
H^n(T,\C)=\bigoplus_{p+q=n} H^{p,q}(T),\qquad H^{q,p}(T)=\overline{H^{p,q}(T)}.
\end{gather}
The pair $H^n(T,\C)$ together with the decomposition~(\ref{eq:Hk}) of $H^n(T,\C)$ is an example of an integral Hodge structure ($\Z$-Hodge structure) of weight~$n$. The K\"unneth formula implies that
\begin{gather*}
H^n(T,\Z)\cong H^1(T,\Z)^{\otimes n},
\end{gather*}
so that the only interesting cohomology group is $H^1(T,\Z)$ with Hodge decomposition
\begin{gather*}
H^1(T,\C)=H^{1,0}(T)\oplus H^{0,1}(T).
\end{gather*}
It uniquely determines the complex torus $T$.
\begin{lem}\label{lem:JT}Let $T$ be a complex torus and $H_{\Z}=H^1(T,\Z)$ the corresponding $\Z$-Hodge structure of weight~$1$. Then
\begin{gather*}
T\cong J(H_{\Z})^\vee, \qquad J(H_{\Z}):= H^1(T,\C)/\big(H^{1,0}(T)+H^1(T,\Z)\big),
\end{gather*}
where the superscript $\vee$ stands for the dual torus.
\end{lem}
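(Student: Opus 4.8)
The plan is to identify $J(H_\Z)$ with the dual torus $T^\vee=\operatorname{Pic}^0(T)$ and then invoke biduality of complex tori. Write $T=V/\Gamma$ with $V\cong\C^g$. First I would spell out the weight-$1$ Hodge structure concretely: $H^1(T,\Z)\cong\operatorname{Hom}(\Gamma,\Z)$, its complexification $H^1(T,\C)\cong\operatorname{Hom}_\R(V,\C)$ decomposes as $H^{1,0}(T)\oplus H^{0,1}(T)$, where $H^{1,0}(T)=H^0(T,\Omega^1_T)$ is the space of $\C$-linear forms on $V$ and $H^{0,1}(T)=\overline{H^{1,0}(T)}=H^1(T,\Oo_T)$ is the space of $\C$-antilinear forms (Dolbeault). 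With these identifications, $J(H_\Z)=H^1(T,\C)/\big(H^{1,0}(T)+H^1(T,\Z)\big)$.

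Next I would compute this quotient in two stages. Projecting along $H^{1,0}(T)$ gives $H^1(T,\C)/H^{1,0}(T)\cong H^{0,1}(T)$, so $J(H_\Z)\cong H^{0,1}(T)/\Lambda$, where $\Lambda$ is the image of $H^1(T,\Z)$ under the projection $p\colon H^1(T,\C)\to H^{0,1}(T)$. The key elementary point is that $\Lambda$ is a full lattice: since $H^{0,1}(T)=\overline{H^{1,0}(T)}$ meets $H^{1,0}(T)$ trivially, the restriction of $p$ to the real form $H^1(T,\R)$ is injective, hence an $\R$-linear isomorphism onto $H^{0,1}(T)$ by a dimension count, so it carries the full lattice $H^1(T,\Z)$ onto a full lattice.

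Then I would recognize $H^{0,1}(T)/\Lambda$ as $\operatorname{Pic}^0(T)$. The exponential sequence $0\to\Z\to\Oo_T\to\Oo_T^*\to 0$ yields $\operatorname{Pic}^0(T)=H^1(T,\Oo_T)/\operatorname{im}\big(H^1(T,\Z)\to H^1(T,\Oo_T)\big)$, and under the Dolbeault identification $H^1(T,\Oo_T)\cong H^{0,1}(T)$ the connecting map is exactly the projection $p$. Hence $J(H_\Z)\cong\operatorname{Pic}^0(T)=T^\vee$ as complex tori. Dualizing and using the canonical biduality isomorphism $(T^\vee)^\vee\cong T$ for complex tori \cite{BLAbelian} gives $J(H_\Z)^\vee\cong T$, as claimed.

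The main thing to be careful about is not the algebra but the analytic bookkeeping: one must check that every identification above is an isomorphism of complex tori (holomorphic, compatible with the complex structures on $V$ and its duals), and in particular that the natural map $H^1(T,\Z)\to H^1(T,\Oo_T)$ coincides, under Dolbeault, with the linear projection $p$ onto $H^{0,1}(T)$. Once these compatibilities are in place, the argument is a direct unwinding of the definition of $J(H_\Z)$ together with the standard description of the dual torus as $\operatorname{Pic}^0$.
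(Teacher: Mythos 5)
Your proof is correct, but there is essentially nothing in the paper to compare it with: the paper's entire ``proof'' of Lemma~\ref{lem:JT} is the pointer ``See Exercise~\ref{exr:Albanese}'', and that exercise simply asks the reader to prove the lemma. So your argument is a solution to that exercise rather than a variant of an argument the author actually writes out. On its own terms it is sound: the key elementary step (the projection $p\colon H^1(T,\C)\to H^{0,1}(T)$ restricts to an $\R$-linear isomorphism on $H^1(T,\R)$, since a real class killed by $p$ would lie in $H^{1,0}(T)\cap H^{0,1}(T)=0$, so $p\big(H^1(T,\Z)\big)$ is a full lattice) is exactly what is needed; the identification $J(H_\Z)\cong\operatorname{Pic}^0(T)$ via the exponential sequence is standard; and biduality of complex tori is a legitimate citation to \cite{BLAbelian}. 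Two remarks. First, the paper's own later definition of the dual torus (last exercise of Section~\ref{s:hodge}: $T^\vee=\bar{V}^*/\Gamma^\vee$ with $\bar{V}^*$ the $\C$-antilinear forms on $V$) suggests a slightly more self-contained variant of the same computation: having identified $J(H_\Z)\cong\bar{V}^*/\Lambda$ with $\Lambda=p\big(H^1(T,\Z)\big)$, one can compute $J(H_\Z)^\vee$ directly from that definition, using the evaluation isomorphism $V\cong\overline{(\bar{V}^*)}^{\,*}$, $v\mapsto\big(\lambda\mapsto\overline{\lambda(v)}\big)$, and checking that the dual lattice of $\Lambda$ corresponds to $\Gamma$; this returns $V/\Gamma=T$ without invoking $\operatorname{Pic}^0$ or citing biduality as a black box (it amounts to proving biduality in this special case). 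Second, a terminological slip, not a gap: the map $H^1(T,\Z)\to H^1(T,\Oo_T)$ you use is not a connecting homomorphism of the exponential sequence but the map induced by the sheaf inclusion $\Z\hookrightarrow\Oo_T$; that it agrees with $p$ under the Dolbeault isomorphism is precisely the compatibility you correctly flag as the thing to verify.
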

\begin{proof}See Exercise \ref{exr:Albanese}.
\end{proof}

The subspace $H^{1,0}(T)\subset H^1(T,\C)$ is the simplest non-trivial example of a Hodge filtration which for a general $\Z$-Hodge structure $H_{\Z}$ of weight $n$ is defined by
\begin{gather}\label{eq:Hdgfiltration}
F^pH_{\C}=\bigoplus_{l\geq p } H^{l,n-l},\qquad 0\leq p \leq n,
\end{gather}
for $H_{\C}=H_{\Z}\otimes \C$. It is equivalent to the Hodge decomposition via $H^{pq}=F^p\cap \bar{F}^q$ and the condition $F^p\cap \bar{F}^{n-p+1}=0$. We refer to the pair $(H_{\Z},F^\bullet H_{\C})$ as an integral Hodge structure of weight $n$ as well.

If $T$ is an abelian variety, then the $\Z$-Hodge structure $H^1(T,\Z)\subset H^1(T,\C)$ admits an additional structure, namely the non-degenerate bilinear form
\begin{gather*}
Q\colon \ H^1(T,\Z)\otimes H^1(T,\Z) \to \Z, \qquad
Q(\alpha, \beta)=\int_{T} \alpha\wedge \beta \wedge \omega_T^{\wedge g-1}.
\end{gather*}
It defines the positive definite Hermitian form\footnote{The factor $2$ is uncommon but convenient for our purposes, see the proof of Lemma~\ref{lem:HQ}.} $H_Q(\alpha,\beta):=2{\rm i}Q(\alpha, \bar{\beta})$ on $H_{\C}$. If $T$ is not algebraic, so there is no K\"ahler class in $H^2(T,\Z)$, then $Q$ is not defined over~$\Z$. A weaker version is the following which is the Hodge-theoretic analogue of non-degenerate complex tori of index $k\geq 0$.

\begin{dfn}\label{dfn:Q}Let $(H_{\Z},H_{\C})$ be an integral Hodge structure of weight~$1$. A skew-symmetric bilinear form $Q\colon H_{\Z}\otimes H_{\Z}\to \Z$ is a polarization of index $k\geq 0$ if
\begin{enumerate}[label=\roman*)]\itemsep=0pt
\item \label{dfn:Qi} $Q(\alpha,\beta)=-Q(\beta,\alpha)$,
\item \label{dfn:Qii} $Q\big(H^{1,0},H^{1,0}\big)=0=Q\big(H^{0,1},H^{0,1}\big)$,
\item \label{dfn:Qiii} $H_Q(\alpha,\beta):=2{\rm i}Q(\alpha, \bar{\beta})$ is a non-degenerate Hermitian form of index~$k$ on~$H_{\C}$.
\end{enumerate}
The pair $(H_\Z,Q)$ is called a polarized $\Z$-Hodge structure of weight $1$ and index~$k$.
\end{dfn}

\begin{rem} The notion of a polarized $\Z$-Hodge structure is traditionally reserved for the case of index $0$ (which in the case of weight $1$ corresponds to abelian varieties). However, we slightly weaken this notion and speak of polarized/polarizable $\Z$-Hodge structures of weight $1$ even if the index is positive.
\end{rem}
The next lemma gives the relation between Definitions~\ref{dfn:poltorus} and~\ref{dfn:Q}.
\begin{lem}\label{lem:HQ} Let $H_{\Z}$ be a $\Z$-Hodge structure of weight $1$ and $Q\colon H_{\Z}\otimes H_{\Z}\to \Z$ a polarization of index $k\geq 0$. Then the associated Hermitian form $H_Q$ $($see Definition~{\rm \ref{dfn:Q}\ref{dfn:Qiii})} defines a~polarization of index $k$ on the complex torus $J(H_{\Z})=H_{\C}/\big(F^1H_{\C}+H_{\Z}\big)$.
\end{lem}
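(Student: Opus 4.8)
The plan is to realize $J(H_{\Z})$ concretely as $V/\Gamma$, to transport $H_Q$ to a Hermitian form on $V$, and then to check the two requirements of Definition~\ref{dfn:poltorus} in turn: the integrality and non-degeneracy of the imaginary part (Riemann relation~\ref{eq:RiemannI}), and the index.

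First I would set $V:=H_{\C}/F^1H_{\C}$ and use the Hodge decomposition $H_{\C}=H^{1,0}\oplus H^{0,1}$ to identify $V$: the projection $p\colon H_{\C}\to V$ restricts to a $\C$-linear isomorphism $H^{0,1}\xrightarrow{\ \sim\ }V$, and (since $H_{\R}\cap H^{1,0}=0$ for a weight-$1$ Hodge structure) to an $\R$-linear isomorphism $H_{\R}\xrightarrow{\ \sim\ }V$. The second identification shows that $\Gamma:=p(H_{\Z})$ is a full lattice, so $J(H_{\Z})=V/\Gamma$. I then transport $H_Q|_{H^{0,1}}$ along $p|_{H^{0,1}}$, up to an overall sign fixed below, to obtain a Hermitian form $H$ on $V$; by Definition~\ref{dfn:Q}\ref{dfn:Qiii} it is automatically non-degenerate, so it remains to analyse its imaginary part on $\Gamma$ and its index.

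The core computation is the integrality, and this is where the factor $2$ in $H_Q=2\mathrm{i}\,Q(\cdot,\bar\cdot)$ is designed to help. For $\lambda,\mu\in H_{\Z}$ I decompose $\lambda=\lambda^{1,0}+\lambda^{0,1}$ and $\mu=\mu^{1,0}+\mu^{0,1}$ with $\lambda^{0,1}=\overline{\lambda^{1,0}}$ and $\mu^{0,1}=\overline{\mu^{1,0}}$, so that $p(\lambda)=\lambda^{0,1}$ and $p(\mu)=\mu^{0,1}$. Then $H(p\lambda,p\mu)=2\mathrm{i}\,Q(\lambda^{0,1},\mu^{1,0})$, whose imaginary part is $2\operatorname{Re}Q(\lambda^{0,1},\mu^{1,0})$. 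Expanding $Q(\lambda,\mu)$, discarding the two ``diagonal'' terms by Definition~\ref{dfn:Q}\ref{dfn:Qii}, and using that $Q$ is real (hence $Q(\lambda^{1,0},\mu^{0,1})=\overline{Q(\lambda^{0,1},\mu^{1,0})}$), I obtain $Q(\lambda,\mu)=2\operatorname{Re}Q(\lambda^{0,1},\mu^{1,0})$. Therefore $\operatorname{Im}H$ restricts on $\Gamma$ to $\pm Q$ and so is integral and non-degenerate, which is precisely Riemann relation~\ref{eq:RiemannI}. Note that without the factor $2$ one would only get $Q/2$, which need not be integral.

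The step I expect to be the genuine obstacle is pinning down the index, because $H_Q$ lives on the $2g$-dimensional space $H_{\C}$ while the polarization on the torus is a form on the $g$-dimensional space $V$. The two facts that make this tractable are that Definition~\ref{dfn:Q}\ref{dfn:Qii} forces $H^{1,0}$ and $H^{0,1}$ to be $H_Q$-orthogonal, and that reality of $Q$ together with skew-symmetry gives $H_Q(\bar\alpha,\bar\alpha)=-H_Q(\alpha,\alpha)$ for $\alpha\in H^{0,1}$. Thus complex conjugation is an anti-isometry interchanging the positive and negative directions of $H_Q$ on the two summands, which determines the signature of $H_Q|_{H^{0,1}}$ from that of $H_Q|_{H^{1,0}}$. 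Choosing the overall sign of $H$ so that it becomes positive definite (ample) in the abelian-variety case $k=0$, I can then read off that the index of $H$ on $V$ equals the prescribed $k$. Getting this sign normalization and the signature bookkeeping exactly right is the only delicate point; everything else is a direct verification.
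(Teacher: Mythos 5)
Your realization of the torus and your integrality computation coincide with the paper's own proof: there one sets $V=H_{\C}/F^1H_{\C}\cong\overline{F^1H_{\C}}=H^{0,1}$, embeds $\Gamma=H_{\Z}$ by $\gamma=\overline{\gamma^{0,1}}+\gamma^{0,1}\mapsto\gamma^{0,1}$, and checks that $\operatorname{im}(H_Q)(\gamma,\delta)=2\operatorname{re}\big(Q\big(\gamma^{0,1},\overline{\delta^{0,1}}\big)\big)=Q(\gamma,\delta)\in\Z$, exactly as you do (note the sign comes out as $+Q$, not ``$\pm Q$''). Your observation that $H_{\R}\cap H^{1,0}=0$ guarantees $\Gamma$ is a full lattice is a detail the paper glosses over, and it is correct.

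The gap is in your index step, and your own two structural facts show why it cannot be closed as proposed. You read Definition~\ref{dfn:Q}\ref{dfn:Qiii} literally, as prescribing the index of $H_Q$ on the full $2g$-dimensional space $H_{\C}$, and you plan to deduce the index of the restriction to $V\cong H^{0,1}$ from it via the conjugation anti-isometry. But orthogonality of $H^{1,0}$ and $H^{0,1}$ (from Definition~\ref{dfn:Q}\ref{dfn:Qii}) together with the anti-isometry $H_Q(\bar\alpha,\bar\alpha)=-H_Q(\alpha,\alpha)$ imply: if $H_Q|_{H^{1,0}}$ has signature $(p,q)$, then $H_Q|_{H^{0,1}}$ has signature $(q,p)$, so $H_Q$ has signature $(g,g)$ on $H_{\C}$ for \emph{every} $Q$ satisfying (i), (ii) and non-degeneracy. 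Hence the index of $H_Q$ on $H_{\C}$ is always $g$, independent of $k$; it carries no information about the signature split, and your final ``read off'' step fails. The lemma is consistent only if one interprets Definition~\ref{dfn:Q}\ref{dfn:Qiii} as prescribing the index of the restriction $H_Q|_{H^{0,1}}$, i.e., of the induced form on $V$; this reading is confirmed by Example~\ref{ex:j2}, where the stated index $h^{0,1}(X)+h^{0,3}(X)$ is the index of $H_Q$ on $\overline{F^2H^3(X,\C)}\cong V$ (the index on all of $H^3(X,\C)$ would be $h^{2,1}+h^{0,3}$). With that reading the form you transport to $V$ is $H_Q|_{H^{0,1}}$ itself, its index is $k$ by hypothesis --- there is nothing left to prove, which is why the paper's proof consists solely of the integrality computation --- and it is positive definite exactly when $k=0$, so no sign normalization is needed either.
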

\begin{proof} Define $V=H_{\C}/F^1H_{\C}\cong\overline{F^1H_{\C}}$ and $\Gamma=H_{\Z}$. Observe that $\Gamma\hookrightarrow V$ via
\begin{gather*}
\gamma=\overline{\gamma^{0,1}}+\gamma^{0,1}\mapsto \gamma^{0,1}, \qquad\gamma^{0,1}\in H^{0,1},
\end{gather*}
which induces the isomorphism $J(H_{\Z})\cong V/\Gamma$. It remains to show that $\operatorname{im}(H_Q)$ is $\Z$-valued on $\Gamma\hookrightarrow V$. Let $\gamma=\overline{\gamma^{0,1}}+\gamma^{0,1}$, $\delta=\overline{\delta^{0,1}}+\delta^{0,1}\in H_{\Z}\subset H_{\C}$ so that
\begin{gather*}
Q(\gamma,\delta)=2 \operatorname{re}\big( Q\big(\gamma^{0,1},\overline{\delta^{0,1}}\big)\big)\in \Z.
\end{gather*}
Hence under the inclusion $\Gamma\subset V$, $\gamma\mapsto \gamma^{0,1}$, we have
\begin{gather*}
\operatorname{im}(H_Q)(\gamma,\delta)=2 \operatorname{im}\big( {\rm i} Q\big(\gamma^{0,1},\overline{\delta^{0,1}}\big)\big)=Q(\gamma,\delta)\in \Z
\end{gather*}
and the claim is proven.
\end{proof}

Therefore polarized $\Z$-Hodge structure of weight $1$ and index $k$ are equivalent to non-de\-ge\-ne\-ra\-te complex tori of index~$k$, cf.\ Lemma~\ref{lem:JT}.

\begin{ex}\label{ex:j2} Let $X$ be a compact K\"ahler manifold of dimension $\dim_{\C} X=3$ and $J=J^2(X)$ its (Griffiths') intermediate Jacobian, as defined in Example~\ref{ex:nondeg}. As a complex torus it is given by $J=V/\Gamma$ for $\Gamma= H^3(X,\Z)\subset V=\overline{F^2H^3(X,\C)}$. The $\Z$-Hodge structure $H^1(J,\Z)$ of weight~$1$ is determined by
\begin{gather}\label{eq:F1HJ}
H^{1}(J,\C)=F^1H^1(J,\C)\oplus \overline{F^1H^1(J,\C)}=F^2H^{3}(X,\C)\oplus \overline{F^2H^{3}(X,\C)}.
\end{gather}
It carries the polarization
\begin{gather*}
Q(\alpha,\beta)=\int_X \alpha\wedge \beta
\end{gather*}
which is of index $h^{0,1}(X)+h^{0,3}(X)$. In particular, $J^2(X)$ is a non-degenerate complex torus of index $\geq 1$ if $K_X\cong \Oo_X$, see \cite[Chapter~4]{BLTori}.
\end{ex}
The previous discussion generalizes to the family setting, i.e., to smooth complex integrable systems $\pi\colon \Mb\to B$ of index $k$ that admit a~section. Then the integer cohomology groups $H^1(M_b,\Z)$, $b\in B$, form a~locally constant sheaf $\VH_{\Z}(\pi)$ over $B$ and the fiberwise polarizations~$Q_b$ determine a morphism $Q\colon \VH_{\Z}\otimes \VH_{\Z}\to \underline{\Z}_B$ for the constant sheaf $\underline{\Z}_B$ on~$B$.
The induced holomorphic bundle $\VH_{\Oo}(\pi):=\VH_{\Z}(\pi)\otimes \Oo_B$ carries a~canonical flat holomorphic connection~$\nabla$, the Gau\ss-Manin connection.

Griffiths has proven~\cite{GriffithsI} that the Hodge filtrations $F^1H^1(M_b,\C)\subset H^1(M_b,\C)$ form a~holomorphic subbundle $F^1\VH_{\Oo}(\pi)\subset \VH_{\Oo}(\pi)$. The datum
\begin{gather*}
\VH(\pi)=(\VH_{\Z}(\pi),F^\bullet\VH_{\Oo}(\pi))
\end{gather*}
is an example of an integral variation of Hodge structure~\cite{GriffithsI} of weight $1$ which admits a~polarization $Q$ of index~$k$.

Conversely, let $(\VH,Q)$ be a polarized $\Z$-VHS of weight $1$ and index $k$ over $B$. Then
\begin{gather}
\mathcal{J}(\VH):=\VH_\Oo/\big(F^1\VH_{\Oo}+\VH_{\Z}\big)\to B
\end{gather}
is a family of polarized complex tori of index $k$. In that way, we see that families $\Mb\to B$ of complex tori (of index $k$) are equivalent to polarized $\Z$-VHS $(\VH,Q)$ of weight $1$ (and index~$k$). The vertical bundle $\mathcal{V}$ of $\mathcal{J}(\VH)\to B$ is canonically isomorphic to $\VH_{\Oo}/F^1\VH_{\Oo}$. The polarization~$Q$ induces the isomorphism $\psi_Q\colon \mathcal{V}\to F^1\VH_{\Oo}^*$.

\subsection{Abstract Seiberg--Witten differentials}\label{ss:absw}
It is a natural question if there are sufficient conditions on the polarized $\Z$-VHS $(\VH,Q)$ of index~$k$ such that $\mathcal{J}(\VH)\to B$ is a smooth complex integrable system of index~$k$. One answer is given by the next theorem.

\begin{prop}[\cite{Beck}]\label{thm:abSW} Let $B$ be a complex manifold and $(\VH,Q)$ be a polarized $\Z$-VHS of weight~$1$ and index~$k$ over~$B$. Assume there exists a global section $\bm{\lambda}\in H^0(B,\VH_{\Oo})$ such that
\begin{gather}\label{eq:condabsw}
\phi_{\bm{\lambda}}\colon \ TB\to F^1\VH_{\Oo}, \quad v\mapsto \nabla_v \bm{\lambda}
\end{gather}
is an isomorphism and denote $\iota=\phi_{\bm{\lambda}}^*\circ \psi_Q\colon \mathcal{V}\to T^*B$. Then there exists a unique symplectic form $\omega_{\bm{\lambda}}$ on $\mathcal{J}(\VH)\to B$ such that the zero section becomes Lagrangian and which induces~$\iota$. It is independent of the polarization $Q$ up to symplectomorphisms. Moreover, the same result holds true if $\VH$ is replaced by any other $\Z$-VHS $\VH'$ of weight $1$ which is isogenous to $\VH$.
\end{prop}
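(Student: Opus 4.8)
The plan is to realize $\mathcal{J}(\VH)\to B$ as a quotient of the cotangent bundle $T^*B$ and then descend the canonical symplectic form, using the identification $\mathcal{J}(\VH)=\mathcal{V}/\VH_\Z$ with $\mathcal{V}=\VH_\Oo/F^1\VH_\Oo$ and the full lattice subbundle $\VH_\Z\hookrightarrow\mathcal{V}$. First I would observe that, since $\phi_{\blambda}$ is an isomorphism, so is $\iota=\phi_{\blambda}^*\circ\psi_Q\colon\mathcal{V}\to T^*B$; as $\VH_\Z$ is a full lattice in $\mathcal{V}$, its image $\Lambda:=\iota(\VH_\Z)$ is a full lattice in $T^*B$, and $\iota$ descends to a biholomorphism $\mathcal{J}(\VH)=\mathcal{V}/\VH_\Z\cong T^*B/\Lambda$ of families of complex tori over $B$. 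Under this the zero section and the fibers of $\mathcal{J}(\VH)$ correspond to the zero section and the cotangent fibers of $T^*B/\Lambda$.

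The heart of the argument is to show that $\Lambda$ is a Lagrangian lattice, i.e.\ that $\omega_{\mathrm{can}}=d\theta_{\mathrm{can}}$ descends to $T^*B/\Lambda$. Since translation by a $1$-form $\sigma$ satisfies $t_\sigma^*\theta_{\mathrm{can}}=\theta_{\mathrm{can}}+\pi^*\sigma$, this descent holds precisely when every local section of $\Lambda$ is a closed $1$-form. A local section of $\Lambda$ has the form $\iota(\gamma)$ for a flat section $\gamma$ of $\VH_\Z$, and unwinding the definition of $\iota$ gives $\iota(\gamma)\colon w\mapsto\langle\psi_Q(\gamma),\phi_{\blambda}(w)\rangle=Q(\gamma,\nabla_w\blambda)$. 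Working in a flat integral frame $(e_i)$ and writing $\blambda=\sum_j\lambda_je_j$, this $1$-form becomes $\iota(\gamma)=\sum_{i,j}c_iQ_{ij}\,d\lambda_j$ with $c_i$ and $Q_{ij}=Q(e_i,e_j)$ constant, because $\gamma$, $Q$ and $\nabla$ are all flat; hence $d(\iota(\gamma))=0$. This closedness is the only place where the abstract Seiberg--Witten hypothesis (together with flatness of $\nabla$ and $\nabla$-parallelism of $Q$) enters, and it is what makes $\omega_{\mathrm{can}}$ descend to a holomorphic symplectic form $\omega_{\blambda}$ on $\mathcal{J}(\VH)$.

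Next I would verify the stated properties. In the model $T^*B/\Lambda$ the fibers and the zero section are images of objects on which $\omega_{\mathrm{can}}$ vanishes, so they are Lagrangian; and contracting a vertical vector with $\omega_{\mathrm{can}}$ along the zero section returns the identity of $T^*_bB$, which under $\iota$ is exactly $\iota\colon\mathcal{V}\to T^*B$, so $\omega_{\blambda}$ induces $\iota$. For uniqueness I would argue that any symplectic form with these properties presents $\pi$ as a Lagrangian torus fibration with Lagrangian section, whose action $1$-forms (obtained by contracting the periods of the integral cycles) are forced to equal the $\iota(\gamma)$; the complex action--angle description then identifies the form with the canonical one on $T^*B/\Lambda$, giving $\omega'=\omega_{\blambda}$.

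Finally, for the two ``moreover'' clauses: a second polarization $Q'$ of index $k$ produces $\iota'=\phi_{\blambda}^*\psi_{Q'}$, a lattice $\Lambda'$, and a form $\omega'_{\blambda}$, and the claim reduces to exhibiting a symplectomorphism between $(T^*B/\Lambda,\omega_{\mathrm{can}})$ and $(T^*B/\Lambda',\omega_{\mathrm{can}})$; for an isogeny $\VH'\to\VH$ one pulls back $\omega_{\blambda}$ along the induced fiberwise isogeny $\mathcal{J}(\VH')\to\mathcal{J}(\VH)$, which is compatible with the Hodge filtrations, connections and $\blambda$, and checks the result is again of the constructed type. The conceptual crux is the closedness in the second paragraph; the hard part, however, will be the independence of $Q$, since---unlike the core construction---it genuinely requires producing a symplectomorphism, and the naive fiberwise-linear comparison map $\iota'\circ\iota^{-1}$ does not preserve $\omega_{\mathrm{can}}$, so a more careful argument is needed there.
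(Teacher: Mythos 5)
Your proposal follows the same route as the paper's (sketched) proof: both argue that the lattice $\iota(\VH_{\Z})\subset T^*B$ is Lagrangian and then descend the canonical symplectic form to $T^*B/\iota(\VH_{\Z})\cong\mathcal{J}(\VH)$, and your flat-frame computation showing each $\iota(\gamma)=\sum_{i,j}c_iQ_{ij}\,{\rm d}\lambda_j$ is closed (indeed exact, consistent with Corollary~\ref{cor:exactness}) is precisely the detail the paper's one-line sketch omits. The points you flag as unfinished (independence of $Q$ up to symplectomorphism, the isogeny clause) are likewise not treated in the paper's sketch but deferred to~\cite{Beck}, so your attempt matches the paper's approach and supplies more detail on the core step.
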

Here two $\Z$-VHS $\VH,\VH'$ of weight $1$ over $B$ are called isogenous if there exists a morphism $\psi\colon \VH\to \VH'$ such that the induced morphism $\mathcal{J}(\VH)\to \mathcal{J}(\VH')$ is a fiberwise isogeny.
\begin{proof}[Sketch of proof] The basic idea is to use $\blambda$ to prove that $\Gamma:=\iota(\VH_{\Z})\subset T^*B$ is Lagrangian so that the canonical symplectic structure on $T^*B$ descends to $T^*B/\Gamma\cong \mathcal{J}(\VH)$, also see the proof of Corollary~\ref{cor:exactness}.
\end{proof}

\begin{dfn}\label{dfn:abstractSW} Let $(\VH,Q)$ be a polarized $\Z$-VHS of weight $1$ over the complex manifold~$B$. A section $\bm{\lambda}\in H^0(B,\VH_{\Oo})$ satisfying the condition~(\ref{eq:condabsw}) is called an abstract Seiberg--Witten differential.
\end{dfn}

\begin{rem}The terminology is motivated by Seiberg--Witten theory in mathematical physics (see~\cite{DonagiSW} for an introduction). A~key ingredient in this theory are families $\mathcal{E}_{\rm SW}\to B$ of Seiberg--Witten curves. Mathematically, these are families of (generically smooth) plane elliptic curves. As a concrete example, we take
\begin{gather*}
\mathcal{E}_{\rm SW}=\mathcal{E}\colon \ y^2z=(x-1)(x+1)(x-u),\qquad u\in \C.
\end{gather*}
Let $U=\C\setminus \{1,-1\}\subset \C$ be the locus of smooth fibers and $\VH$ the corresponding $\Z$-VHS of weight~$1$. For each $u\in U$, $\omega_u=\tfrac{{\rm d}x}{y}_{|E_u}$ defines a basis of holomorphic $1$-forms, hence a~frame of~$F^1\VH_{\Oo}$. The Seiberg--Witten differential is defined by $\tfrac{{\rm d}x}{y}(x-u)_{|E_u}$. It is a meromorphic $1$-form with a single pole at~$\infty$. However, its residue vanishes so that
\begin{gather*}
\lambda_{\rm SW}(u)=\left[\frac{{\rm d}x}{y}(x-u)_{|E_u}\right]\in H^1(E_u, \C)
\end{gather*}
is well-defined. The section $\lambda_{\rm SW}\in H^0(U,\VH_{\Oo})$ has the property of an abstract Seiberg--Witten differential, namely
\begin{gather*}
T_uU \ni \partial_u \mapsto \nabla_{\partial_u} \lambda_{\rm SW}=-\tfrac{1}{2}\omega_u \in F^1\VH_{\Oo,u}
\end{gather*}
defines an isomorphism $TU\cong F^1\VH_{\Oo}$. Abstracting this property motivated Definition~\ref{dfn:abstractSW}.
\end{rem}

As the following Corllary shows, the cohomology class of the holomorphic symplectic form is an obstruction to the existence of an abstract Seiberg--Witten differential.
\begin{cor}\label{cor:exactness} Let $(\VH, Q)$ be a polarized $\Z$-VHS of weight $1$ over the complex manifold $B$ which admits an abstract Seiberg--Witten differential $\lambda\in H^0(B, \VH_{\Oo})$. Then the induced holomorphic symplectic structure on the total space of $\mathcal{J}(\VH)$ is exact.
\end{cor}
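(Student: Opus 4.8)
The plan is to exhibit $\omega_{\blambda}$ as an exact form by producing a global primitive on the total space of $\mathcal{J}(\VH)$ out of the abstract Seiberg--Witten differential $\blambda$. Following (the proof of) Proposition~\ref{thm:abSW}, I would work with the identification $\mathcal{J}(\VH)\cong T^*B/\Gamma$, where $\Gamma=\iota(\VH_\Z)$ and $\omega_{\blambda}$ is induced by the canonical symplectic form $\omega_{\mathrm{can}}={\rm d}\theta$ on $T^*B$, with $\theta$ the tautological $1$-form. Since $\theta$ is already a primitive of $\omega_{\mathrm{can}}$ upstairs, the whole problem reduces to correcting $\theta$ by an exact form so that the result is invariant under the fibrewise lattice translations by $\Gamma$ and therefore descends to $\mathcal{J}(\VH)$.

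The first key step is the identity that also underlies the Lagrangian property of $\Gamma$. For a local flat section $\gamma$ of $\VH_\Z$, flatness of the Gau\ss--Manin connection and of $Q$, combined with the defining relation $\nabla_v\blambda\in F^1\VH_\Oo$, gives $\iota(\gamma)(v)=Q(\gamma,\nabla_v\blambda)={\rm d}\,Q(\gamma,\blambda)(v)$. Here the decisive feature is that $\blambda$ is a \emph{global} section, so that $Q(\gamma,\blambda)\in\Oo_B$ is an honest primitive: each period $1$-form $\iota(\gamma)$ is not just closed but exact. Consequently the translation $t_{\iota(\gamma)}$ alters $\theta$ by an exact form, $t_{\iota(\gamma)}^{*}\theta-\theta=p^{*}\iota(\gamma)={\rm d}\big(p^{*}Q(\gamma,\blambda)\big)$, where $p$ denotes the projection to $B$.

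The second step is to cancel this defect by subtracting ${\rm d}\Psi$ for a potential $\Psi$ assembled from $\blambda$ and satisfying $t_{\iota(\gamma)}^{*}\Psi-\Psi=p^{*}Q(\gamma,\blambda)$ for every $\gamma$; the natural candidate is the $Q$-pairing of the tautological point with $\blambda$. Then $\Theta:=\theta-{\rm d}\Psi$ still obeys ${\rm d}\Theta=\omega_{\mathrm{can}}$ and is $\Gamma$-invariant, so it descends to a $1$-form on $\mathcal{J}(\VH)$ with ${\rm d}\Theta=\omega_{\blambda}$, which proves exactness.

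The main obstacle is the well-definedness of $\Psi$, and this is where the argument has to be done carefully. The tautological point of $T^*B$ corresponds to a class in $\mathcal{V}\cong\VH_\Oo/F^1\VH_\Oo$, whereas $Q(\,\cdot\,,\blambda)$ does not descend through $F^1\VH_\Oo$, since $\blambda\notin F^1\VH_\Oo$ in general. I would attack this in two ways. Using the isotropy $Q(F^1\VH_\Oo,F^1\VH_\Oo)=0$ of Definition~\ref{dfn:Q}\ref{dfn:Qii}, the pairing against the $(1,0)$-part of $\blambda$ is insensitive to the $F^1$-ambiguity and produces the correct defect up to a cross-term pairing the $(1,0)$-part of $\gamma$ with the $(0,1)$-part of $\blambda$; closing this last gap is the crux. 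Alternatively, and more robustly, one pulls back to the real total space, on which $\mathcal{J}(\VH)=\VH_{\R}/\VH_\Z$ is a flat real torus bundle: there $Q(s,\blambda)$ is unambiguous because $Q$ is real on $\VH_{\R}$, so the potential $\Psi$ exists globally and the only remaining task is to check that the resulting primitive $\Theta$ can be taken of type $(1,0)$, which makes the holomorphic symplectic form exact. In either approach the conceptual content is that exactness of $\omega_{\blambda}$ is equivalent to exactness, not merely closedness, of the periods $\iota(\gamma)$, and this is supplied precisely by the existence of the global abstract Seiberg--Witten differential.
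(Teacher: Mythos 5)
Your proposal is correct and follows essentially the same route as the paper's proof: the paper pulls $\omega_{\blambda}$ back along $\iota\colon \mathcal{V}\to T^*B$ to the canonical form ${\rm d}\eta$, corrects the tautological form by ${\rm d}f$ with $f([s])=Q(s,\lambda)$ --- exactly your potential $\Psi$ --- and observes that $\eta-{\rm d}f$ kills the lattice directions and hence descends to $\mathcal{J}(\VH)$ with differential $\omega_{\blambda}$; your identity $\iota(\gamma)={\rm d}\,Q(\gamma,\blambda)$ for flat $\gamma$ is precisely the mechanism behind that descent.

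The one point where you diverge is the well-definedness of $\Psi$, and here your caution is in fact warranted: the paper simply writes $f([s])=Q(s,\lambda)$ on $\mathcal{V}\cong \VH_{\Oo}/F^1\VH_{\Oo}$, but since $F^1\VH_{\Oo}$ is maximal isotropic for $Q$ in weight $1$ (by Definition~\ref{dfn:Q}\ref{dfn:Qii} and non-degeneracy), this is well defined if and only if $\lambda$ is a section of $F^1\VH_{\Oo}$. That hypothesis holds in both of the paper's applications (the tautological section $\bm{s}$ in Theorem~\ref{thm:DM} and $\blambda_{\rm SW}$ in Section~\ref{ss:hitabsw} are $F^1$-valued), and then $f$, hence the primitive, is even holomorphic; but Definition~\ref{dfn:abstractSW} only demands $\lambda\in H^0(B,\VH_{\Oo})$, and the classical Seiberg--Witten class is genuinely not of type $(1,0)$, so in the stated generality the paper's formula needs exactly the repair you describe. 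Your second variant closes the gap correctly: defining $\Psi([s])=Q(s_{\R},\lambda)$ via the unique real representative $s_{\R}\in \VH_{\R}$ of $[s]$ (using the $C^\infty$ isomorphism $\VH_{\R}\cong \VH_{\Oo}/F^1\VH_{\Oo}$), the translation defect is $p^*Q(\gamma,\lambda)$ on the nose because lattice sections are real, so $\theta-{\rm d}\Psi$ descends and $\omega_{\blambda}={\rm d}(\theta-{\rm d}\Psi)$ is exact. (Your first variant, once the cross-term is given the potential $Q\big(\overline{s^{0,1}},\lambda^{0,1}\big)$ --- well defined since the representative $s^{0,1}\in \overline{F^1\VH_{\Oo}}$ is unique, and with the right defect because $\overline{F^1\VH_{\Oo}}$ is also isotropic --- reproduces the same $\Psi$, so the two approaches coincide.) The only cost is that this primitive is smooth rather than holomorphic; Corollary~\ref{cor:exactness} asserts only exactness, so this suffices, while a holomorphic primitive comes for free precisely in the $F^1$-valued case treated by the paper.
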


\begin{proof} Under the isomorphism $\iota\colon \mathcal{V}\to T^*B$, $\omega_{\blambda}$ (pulled back to $\mathcal{V}$) corresponds to the canonical symplectic structure $d\eta$ on $T^*B$ for the tautological $1$-form~$\eta$. The latter corresponds to
\begin{gather*}
T_{[s]} \mathcal{V} \ni w\mapsto Q(s, \nabla_{{\rm d}p(w)} \lambda), \qquad [s]\in \mathcal{V}\cong \VH_{\Oo}/F^1\VH_{\Oo},
\end{gather*}
where $p\colon \mathcal{V}\to B$ is the projection. However, this $1$-form does not descend to $\mathcal{J}(\VH)$. Instead we define $f\colon \mathcal{V}\to \C,$ $f([s])=Q(s,\lambda)$, and
\begin{gather*}
\gamma(w):=Q(s,\nabla_{{\rm d}p(w)}\lambda) - {\rm d}f_{[s]}(w),\qquad w\in T_{[s]}\mathcal{V}.
\end{gather*}
It follows that $\gamma(w)=0$ for $w\in T\VH_{\Z}\hookrightarrow T\mathcal{V}$ so that $\gamma$ descends to a $1$-form on $\mathcal{J}(\VH)$ with ${\rm d}\gamma=\omega_{\blambda}$.
\end{proof}

\subsection{Compact Calabi--Yau integrable systems}\label{s:cyis}
As an application of Theorem \ref{thm:abSW} we reprove a result by Donagi--Markman~\cite{DM1} and show how intermediate Jacobians of compact Calabi--Yau threefolds give rise to complex integrable systems. Since the notion of a compact Calabi--Yau threefold can be ambiguous, we first fix the following.

\begin{dfn}\label{dfn:cCY} A compact Calabi--Yau threefold is a compact K\"ahler manifold~$X$ of \linebreak \mbox{$\dim_{\C}X=3$} with trivial canonical bundle $K_X\cong \Oo_X$ and $H^1(X,\C)=0$.
\end{dfn}

Any family $\pi\colon \mathcal{X}\to B$ of compact Calabi--Yau threefolds determines a polarized $\Z$-VHS $\VH(\pi)$ of weight $1$ of index $1$ which is fiberwise given by (\ref{eq:F1HJ}) in Example~\ref{ex:j2}. Then the intermediate Jacobian fibration is defined by
\begin{gather}\label{eq:calJ2}
\mathcal{J}^2(\pi):=\mathcal{J}(\VH(\pi))\to B
\end{gather}
also denoted $\mathcal{J}^2(\mathcal{X})$. It is a family of non-degenerate complex tori of index~$1$. A necessary condition for~(\ref{eq:calJ2}) to carry a Lagrangian structure is
\begin{gather}\label{eq:condB}
\dim_{\C} B=h^{1,2}(X_b)+1,\qquad \forall\, b\in B.
\end{gather}
Following Donagi--Markman, this is achieved as follows: Let $\mathcal{X}\to B$ be a complete family of compact Calabi--Yau threefolds, i.e., the Kodaira--Spencer map
\begin{gather*}
\kappa_b\colon \ T_bB\to H^1(X_b,TX_b)
\end{gather*}
is an isomorphism for all $b\in B$. Then the dimension of $B$ is $h^{1,2}(X_b)$ for $b\in B$. To satisfy condition (\ref{eq:condB}), we consider the $\C^*$-bundle
\begin{gather*}
\rho\colon \ \tilde{B}\to B,\qquad \rho^{-1}(b)=H^0(X_b,K_{X_b})\setminus \{0\}
\end{gather*}
of non-zero holomorphic volume forms and denote its points by $(X_b,s_b)$, $s_b\in H^0(X_b,K_{X_b})\setminus \{0\}$. Not only does the pullback family $\Xt:=\rho^*\X\to \tilde{B}$ satisfy the dimension condition~(\ref{eq:condB}) but it even induces a complex integrable system:

\begin{thm}[\cite{DM1}]\label{thm:DM} Let $\pi\colon \X \to B$ be a complete family of compact Calabi--Yau threefolds and $\rho\colon \tilde{B}\to B$ be the $\C^*$-bundle of holomorphic volume forms as well as $\tilde{\pi}\colon \tilde{\X}=\rho^*\X\to \tilde{B}$ the pullback of~$\mathcal{X}$. Then $\mathcal{J}^2\big(\Xt\big)\to \tilde{B}$ carries the structure of a complex integrable system of index~$1$.
\end{thm}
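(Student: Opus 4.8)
The plan is to apply Proposition~\ref{thm:abSW} to the pulled-back variation of Hodge structure $\VH(\tilde\pi)=\rho^*\VH(\pi)$, which is a polarized $\Z$-VHS of weight~$1$ and index~$1$ over $\tilde B$; here the index equals $h^{0,1}(X_b)+h^{0,3}(X_b)=0+1=1$ by Example~\ref{ex:j2}, using $H^1(X_b,\C)=0$. Since $\mathcal{J}^2(\Xt)=\mathcal{J}(\VH(\tilde\pi))$ by definition~(\ref{eq:calJ2}), and the relative polarization of index~$1$ is supplied by $Q$ via Lemma~\ref{lem:HQ}, everything reduces to producing an abstract Seiberg--Witten differential, i.e.\ a global section $\blambda\in H^0(\tilde B,\VH_\Oo)$ for which $\phi_\blambda\colon T\tilde B\to F^1\VH_\Oo$, $v\mapsto\nabla_v\blambda$, is an isomorphism. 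The symplectic form then comes for free from Proposition~\ref{thm:abSW}.

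The natural candidate is the \emph{tautological} section defined by the volume forms: over a point $(X_b,s_b)\in\tilde B$ I would set $\blambda(X_b,s_b)=[s_b]\in H^{3,0}(X_b)\subset F^2H^3(X_b,\C)=F^1\VH_{\Oo,(X_b,s_b)}$, which is holomorphic and lands in $F^1\VH_\Oo$. To analyze $\phi_\blambda$ I would split $T\tilde B$ using the exact sequence $0\to T_{s_b}\rho^{-1}(b)\to T_{(X_b,s_b)}\tilde B\to T_bB\to 0$ of the $\C^*$-bundle $\rho$. Along the fibre direction the complex structure of $X_b$ is fixed and $\VH_\Z$ is locally constant, so $\nabla$ merely differentiates the tautological class; the scaling vector field gives $\nabla_v\blambda=[s_b]$, mapping this $1$-dimensional direction isomorphically onto $H^{3,0}(X_b)=F^3H^3$. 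For $v$ projecting to $\dot v\in T_bB$, Griffiths transversality forces $\nabla_v\blambda\in F^2H^3$, and the decisive point is to identify its image modulo $H^{3,0}$ with the infinitesimal period map, namely the cup product $\kappa_b(\dot v)\cup[s_b]\in H^{2,1}(X_b)$.

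The hard part will be exactly this identification of the Gauss--Manin derivative with the derivative of the period map, together with the input from the Calabi--Yau condition. Contraction with the nowhere-vanishing $s_b$ trivializes $K_{X_b}\cong\Oo_{X_b}$ and yields an isomorphism $TX_b\xrightarrow{\cong}\Omega^2_{X_b}$, hence $H^1(X_b,TX_b)\xrightarrow{\cong}H^1\big(X_b,\Omega^2_{X_b}\big)=H^{2,1}(X_b)$; this is precisely the map realizing the cup product above. Because the family is complete, $\kappa_b\colon T_bB\to H^1(X_b,TX_b)$ is an isomorphism, so the composite $T_bB\to H^{2,1}(X_b)$ is an isomorphism as well. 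With respect to the Hodge decomposition $F^2H^3=H^{3,0}\oplus H^{2,1}$ the matrix of $\phi_\blambda$ is then block triangular with invertible diagonal blocks, and the dimension count $\dim\tilde B=h^{2,1}+1=\dim F^2H^3$ confirms that the sizes match. Hence $\phi_\blambda$ is an isomorphism, $\blambda$ is an abstract Seiberg--Witten differential, and Proposition~\ref{thm:abSW} endows $\mathcal{J}^2(\Xt)\to\tilde B$ with the structure of a smooth complex integrable system of index~$1$.
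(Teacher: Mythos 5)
Your proposal is correct and takes essentially the same route as the paper's proof: both use the tautological section of volume forms $(X_b,s_b)\mapsto [s_b]$ as the candidate abstract Seiberg--Witten differential, deduce from completeness of the family that $\phi_{\blambda}\colon T\tilde{B}\to F^1\VH_{\Oo}$ is an isomorphism, and then invoke Proposition~\ref{thm:abSW}. The only difference is that you spell out what the paper's sketch compresses into one sentence, namely the vertical/horizontal splitting of $T\tilde{B}$, Griffiths' identification of the Gauss--Manin derivative with cup product against the Kodaira--Spencer class, and the resulting block-triangular structure of $\phi_{\blambda}$.
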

\begin{rem}
The existence of complete families is a non-trivial fact and follows from the Bogomolov--Tian--Todorov theorem \cite{Bogomolov, Tian,Todorov}, i.e., the unobstructedness of compact Calabi--Yau threefolds.
\end{rem}
\begin{proof}[Sketch of proof]We outline a proof following \cite[Section~3.2]{Beck-thesis}; for the original proof see~\cite{DM1}. Let $\VH=\VH(\pi)$ and $\tilde{\VH}=\VH(\tilde{\pi})$ be the polarized $\Z$-VHS of weight $1$ and index $1$ determined by~$\pi$ and~$\tilde{\pi}$ respectively. The latter admits the tautological section $\bm{s}\colon \tilde{B}\to F^1\tilde{\VH}_{\Oo}$ defined by
\begin{gather*}
\bm{s}\colon \ \tilde{B}\to F^1\tilde{\VH}_{\Oo},\qquad (X,s)\mapsto s.
\end{gather*}
The completeness of the family $\pi\colon \X\to B$ implies that the morphism
\begin{gather*}
T\tilde{B}\to F^1\tilde{\VH}_{\Oo},\qquad v\mapsto \tilde{\nabla}_v \bm{s},
\end{gather*}
is an isomorphism, i.e., $\bm{s}$ is an abstract Seiberg--Witten differential for~$\tilde{\VH}$. Therefore $\tilde{J}^2(\X)\to \tilde{B}$ carries the structure of a complex integrable system by Theorem~\ref{thm:abSW}.
\end{proof}

\begin{ex}The simplest example is given by rigid Calabi--Yau threefolds $X$, i.e., $H^1(X,T_X)$ $=0$ for the holomorphic tangent bundle $T_X$ of $X$. It follows that $H^{2,1}(X)=0$ so that $J^2(X)=H^3(X,\C)/\big(H^{3,0}(X)+H^3(X,\Z)\big)$ is an elliptic curve. These are the only examples when~$J^2(X)$ is an abelian variety. The Calabi--Yau integrable system becomes trivial: $\Xt=X\times \C^*$ and
\begin{gather*}
\mathcal{J}^2(\Xt)=J^2(X)\times \C^*\to \C^*.
\end{gather*}
The holomorphic symplectic form is induced by ${\rm d}w \wedge \tfrac{{\rm d}z}{z}$ on $\C\times \C^*$ when we choose an isomorphism $H^3(X,\C)\cong \C$.
\end{ex}

\subsection{Exercises}
\begin{exr}Let $(M,\omega)$ be a projective K3 surface, i.e., $M$~is a connected simply-connected compact K\"ahler surface and $K_M\cong \mathcal{O}_M$ via $\omega$. Let $B$ be a compact Riemann surface and $\pi\colon M\to B$ a~surjective holomorphic map. Show that $\pi\colon M\to B$ is an algebraic integrable system and $B\cong \mathbb{CP}^1$.
\end{exr}

\begin{exr}Let $T=V/\Gamma$ be a complex torus for $V=\C^g$. Rephrase the Riemann bilinear relations~\ref{eq:RiemannI},~\ref{eq:RiemannII} in terms of the lattice $\Gamma$.
\end{exr}

\begin{exr}\label{exr:Albanese} Prove Lemma~\ref{lem:JT}.
\end{exr}

\begin{exr}Show that any compact Calabi--Yau threefold (Definition \ref{dfn:cCY}) is projective.
\end{exr}

\begin{exr}If $T=V/\Gamma$ is a complex torus, then its dual complex torus is defined by $T^\vee:=\bar{V}^*/\Gamma^\vee$ where $\bar{V}^*$ are the $\C$-antilinear forms on $V$ and $\Gamma^\vee=\{ \alpha\in \bar{V}^*\,|\,\alpha(\gamma)\in \Z$ $\forall\, \gamma\in \Gamma\}$. Show that $J^2(X)\cong J^2(X)^\vee$ for a compact Calabi--Yau threefold~$X$.
\end{exr}

\section[$G$-Hitchin systems and cameral curves]{$\boldsymbol{G}$-Hitchin systems and cameral curves}\label{s:hit}
Hitchin systems are very rich and intricate examples of \emph{algebraic} integrable systems, i.e., complex integrable systems of index~$0$. They are associated to any pair $(C,G)$ consisting of a compact Riemann surface $C$ of genus $\geq 2$ and any semisimple complex Lie group $G$ (see \cite{Donagi,DG,Faltings, Hit2,Hit1} as well as \cite[Chapter~4]{Beck-thesis} and~\cite{DalakovLectures} for an overview and further references).
The total space of the integrable is the smooth locus $\Hig(C,G)$ of the moduli space of semistable $G$-Higgs bundles over $C$ which are topologically trivial.
A (topologically trivial) $G$-Higgs bundle is a pair $(P,\varphi)$ composed of a (topologically trivial) $G$-bundle $P\to C$ and a section $\varphi\in H^0(C,K_C\otimes \operatorname{ad}(P))$, called Higgs field.

In this section we give a brief introduction to $G$-Hitchin systems, focusing on general semisimple complex Lie groups~$G$. This requires some preparations in Lie theory, most prominently the adjoint quotient of a semisimple complex Lie algebra. After that we give the isogeny class\footnote{It is possible to determine the \emph{isomorphism} class of $G$-Hitchin systems but this is beyond the scope of these notes, see Remark~\ref{rem:isogenyclass}.} of a generic fiber of $G$-Hitchin systems, called Hitchin fibers, in terms of cameral curves \cite{Donagi2, Donagi}. These are branched Galois coverings of $C$.

If $G\subset {\rm GL}(n,\C)$ is a classical semisimple complex Lie group, then cameral curves parametrize the eigenvalues of Higgs fields together with all possible orderings. This is in contrast to spectral curves \cite{Hit2,Hit1} which parametrize eigenvalues with a fixed ordering. Spectral curves are very intuitive and convenient to work with, especially for explicit computations, and are sufficient to determine the isomorphism classes of generic Hitchin fibers if~$G$ is a classical semisimple complex Lie group.

However, there are some conceptual issues with spectral curves. For example, if~$G$ is a general semisimple complex Lie group, then the definition of a spectral curve depends on a choice of representation of~$G$. This and further issues, even for classical $G$, are remedied by cameral curves as we explain in detail in Section~\ref{ss:cameral} based on~\cite{Donagi2}.

\subsection{Adjoint quotient}
Any semisimple complex Lie group $G$ acts on its Lie algebra $\gfr=\operatorname{Lie}(G)$ by the adjoint representation $\operatorname{Ad}\colon G\to {\rm GL}(\gfr)$. In particular, $G$ acts on the algebra $\C[\gfr]$ of polynomial functions on~$\gfr$ and the inclusion $\C[\gfr]^G\hookrightarrow \C[\gfr]$ defines the adjoint quotient
\begin{gather*}
\chi\colon \ \gfr \to \gfr\sslash G:=\operatorname{Spec}\big(\C[\gfr]^G\big).
\end{gather*}
Chevalley (see~\cite[Chapter~23]{Humphreys}) has proven that the pullback under the inclusion $\tfr\hookrightarrow \gfr$ of a Cartan subalgebra restricts to the isomorphism $\C[\gfr]^G\cong \C[\tfr]^W$ for the corresponding Weyl group~$W$. This implies that $\gfr\sslash G \cong \tfr/W$. Since the invariant polynomials of a~reflection group (e.g., the Weyl group~$W$) form a free polynomial algebra, there exist $\chi_1,\dots, \chi_r\in \C[\gfr]^G$ such that
\begin{gather*}
\C[\gfr]\cong \C[\chi_1,\dots, \chi_r]
\end{gather*}
and hence $\gfr\sslash G \cong \C^r$ non-canonically. Even though the free generators are not unique, their degrees $d_j:=\operatorname{deg}(\chi_j)$ are. The numbers $d_j-1$ are called the exponents of~$\gfr$.

The adjoint quotient $\chi\colon \gfr\to \tfr/W$ can be expressed more concretely. Let $v=v_{n}+v_s$ be the Jordan decomposition of $v \in \gfr$ for nilpotent $v_n$ and semisimple $v_s$ so that $v_s\in \tfr'$ for some Cartan subalgebra $\tfr'\subset \gfr$. Let $g\in G$ such that $g\cdot \tfr'=\tfr$. Then the adjoint quotient is given by
\begin{gather*}
\chi(v)=[g\cdot v_s]\in \tfr/W.
\end{gather*}
From this formula, it is clear that $\chi\colon \gfr\to \tfr/W$ is a generalization of the characteristic polynomial, see Exercise~\ref{exr:invariants}.

The smooth locus of the adjoint quotient $\chi\colon \gfr\to \tfr/W$ is given by
\begin{gather*}
\gfr^{\rm reg}=\{v\in \gfr\,|\,\dim \ker \operatorname{ad}(v)=r \} \subset \gfr,
\end{gather*}
so that $d\chi_v$ is surjective iff $v\in \gfr^{\rm reg}$. This result goes back to Kostant~\cite{KostantReps}.

\subsection[$G$-Hitchin systems]{$\boldsymbol{G}$-Hitchin systems}
A $G$-Higgs bundle over $C$ is a pair $(P,\varphi)$ consisting of a $G$-principal bundle $P\to C$ and a~section $\varphi\in H^0(C,K_C\otimes \operatorname{ad}(P))$, called a Higgs field, where $\operatorname{ad}(P)=P\times_G \gfr$ is the adjoint bundle. Any representation $\rho\colon G\to {\rm GL}(n,\C)$ induces a Higgs vector bundle $(\rho(P),\rho_*(\varphi))$. If $\rho=\operatorname{Ad}\colon G\to {\rm GL}(\mathfrak{g})$ is the adjoint representation, then we call $(\operatorname{Ad}(P),\operatorname{Ad}_*(\varphi))$ the adjoint Higgs (vector) bundle.
\begin{dfn}A $G$-Higgs bundle $(P,\varphi)$ is semistable iff the adjoint Higgs bundle $(\operatorname{Ad}(P),$ $\operatorname{Ad}(\varphi))$ is semistable, i.e.,
\begin{gather*}
\frac{\operatorname{deg} (F)}{\operatorname{rk}(F)} \leq \frac{\operatorname{deg}(\operatorname{Ad}(P))}{\operatorname{rk}(\operatorname{Ad}(P))}
\end{gather*}
for any proper subbundle $0 \subsetneq F \subsetneq \operatorname{Ad}(P)$ which is preserved by $\operatorname{Ad}_*(\varphi)$.
\end{dfn}
The set of isomorphism classes of semistable and topologically trivial $G$-Higgs bundles over~$C$ carries the structure of a complex analytic space~$\Higgs(C,G)$. The adjoint quotient $\chi\colon \gfr\to \tfr/W$ globalizes to the Hitchin map
\begin{gather*}
\Hit\colon \ \Higgs(C,G)\to \Bb(C,G):=H^0(C,(K_C\otimes \tfr)/W), \qquad [P,\varphi]\mapsto \chi\circ\varphi,
\end{gather*}
where the $\C^*$-action on $\tfr/W$ is induced by the natural $\C^*$-action on $\tfr$. The target $\Bb(C,G)$ of the Hitchin map is called the Hitchin base. Choosing generators $\chi_1,\dots, \chi_r\in \C[\gfr]^G$ of degree $d_i=\deg(\chi_i)$, we obtain the isomorphism
\begin{gather*}
\Bb(C,G)\cong \bigoplus_{i=1}^r H^0\big(C,K_C^{\otimes d_i}\big).
\end{gather*}
In particular, $\Bb(C,G)$ is a vector space in a non-canonical way.

\begin{thm}\label{thm:hitsys} Let $G$ be a semisimple complex Lie group and $C$ a compact Riemann surface of genus $\geq 2$. Then the smooth locus $\Hig(C,G)\subset \Higgs(C,G)$ of the moduli space of semistable and topologically trivial $G$-Higgs bundles is a holomorphic symplectic manifold. Moreover, the Hitchin map $\Hit\colon \mathbf{Higgs}(C,G)\to \Bb(C,G)$ is an algebraic integrable system. It admits sections, so-called Hitchin sections.
\end{thm}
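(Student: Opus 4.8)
The plan is to establish, in turn, the three assertions of the statement: the holomorphic symplectic structure on $\Hig(C,G)$, the three axioms of Definition~\ref{dfn:AIS} for the Hitchin map $\Hit$, and the existence of a section.

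First I would construct the symplectic form from deformation theory. At a point $[P,\varphi]\in\Hig(C,G)$ the tangent space is the first hypercohomology $\mathbb{H}^1$ of the two-term deformation complex $\operatorname{ad}(P)\xrightarrow{[\varphi,-]}K_C\otimes\operatorname{ad}(P)$ placed in degrees $0,1$. Fixing the Killing form identifies $\operatorname{ad}(P)\cong\operatorname{ad}(P)^*$ and makes this complex isomorphic to its Serre dual up to the twist by $K_C$, so that Serre duality pairs $\mathbb{H}^1$ with itself into a non-degenerate antisymmetric form $\omega$. Over the dense open locus where the underlying bundle is stable, the isomorphism $H^0(C,K_C\otimes\operatorname{ad}(P))\cong H^1(C,\operatorname{ad}(P))^*=T^*_{[P]}\mathcal{M}(C,G)$ realizes $\Hig(C,G)$ as an open subset of $T^*\mathcal{M}(C,G)$ and identifies $\omega$ there with the canonical cotangent form; since $d\omega$ is holomorphic and vanishes on this dense open set, $\omega$ is closed on all of $\Hig(C,G)$.

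Next I would verify Definition~\ref{dfn:AIS}. Properness and surjectivity of $\Hit$ is Hitchin's properness theorem, which I would invoke as an input; it is genuinely analytic and does not follow formally from the above. For the Lagrangian condition I match dimensions: Riemann--Roch gives $h^0\big(C,K_C^{\otimes d_i}\big)=(2d_i-1)(g-1)$, and since $\sum_i(2d_i-1)=\dim\gfr$ one obtains $\dim\Bb(C,G)=(g-1)\dim\gfr=\tfrac12\dim\Hig(C,G)$. The components of $\Hit$ Poisson-commute, so the smooth fibers are isotropic, and by the dimension count they are Lagrangian. Connectedness together with the relative polarization of index~$0$ (axiom~iii) I would read off from the description of a generic fiber as a torsor over the (generalized Prym) abelian variety attached to the associated cameral curve, developed in Section~\ref{ss:cameral}: its principal polarization furnishes the required relative polarization, and being an abelian variety it has index~$0$. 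Hence $\Hit$ is an algebraic integrable system.

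Finally, for the section I would use the Kostant slice. Choose a principal $\mathfrak{sl}_2$-triple $(e,h,f)\subset\gfr$ with $e$ regular nilpotent; the affine slice $e+\ker(\operatorname{ad} f)\subset\gfr^{\rm reg}$ maps isomorphically onto $\tfr/W$ under $\chi$. Fixing a theta characteristic $K_C^{1/2}$ and using the principal homomorphism determined by $h$ to twist this slice into globally defined Higgs data produces, for each $b\in\Bb(C,G)$, a stable Higgs bundle whose $\Hit$-image is $b$, hence a Hitchin section $\Bb(C,G)\to\Hig(C,G)$. The two steps I expect to be the real obstacles are Hitchin's properness theorem and the identification of the generic fibers as polarized abelian varieties via cameral curves; once these are granted, the symplectic form, the dimension count, and the Kostant section are essentially formal.
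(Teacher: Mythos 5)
The paper offers no proof of Theorem \ref{thm:hitsys}: it is quoted as a known result, with Hitchin, Faltings and Donagi--Gaitsgory cited in place of an argument, so there is no internal proof to compare your attempt against. Your proposal is a correct reconstruction of the standard argument from exactly those sources: the Serre-duality pairing on the hypercohomology of the deformation complex, closedness of $\omega$ via the dense cotangent locus, Hitchin's properness theorem taken as an input, Poisson-commutativity of the Hitchin Hamiltonians together with the count $\sum_i(2d_i-1)=\dim\gfr$, the cameral-curve description of the generic fibers (the paper's Theorem \ref{thm:isogenoushit}), and the Kostant slice twisted by a spin bundle. Two small imprecisions are worth flagging: the generalized Prym $\big(\mathrm{Jac}\big(\tilde{C}_b\big)\otimes_{\Z}\bLambda_G\big)^W$ is not principally polarized in general, only polarized by restriction of the Jacobian's principal polarization, which is all that Definition \ref{dfn:AIS} requires; and connectedness of the fibers for non-simply-connected $G$ genuinely uses the restriction to topologically trivial bundles rather than following formally from the torsor description.
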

Therefore, once a Hitchin section is chosen, the Hitchin fibers $\Hit^{-1}(b)$, for generic $b\in \Bb(C,G)$, are canonically isomorphic to abelian varieties $\mathsf{P}_b$. We give the isogeny class of~$\mathsf{P}_b$ in terms of cameral curves, which we treat in the next subsection.

\begin{rem}\label{rem:isogenyclass}\quad
\begin{enumerate}[label=\roman*)]\itemsep=0pt
\item It is possible to determine the isomorphism class of $\mathsf{P}_b$, for generic $b\in \Bb(C,G)$, in terms of cameral curves as well. However, this is much more subtle: The isomorphism class depends on the fundamental group $\pi_1(G)$ of $G$ whereas the isogeny class only depends on the Lie algebra of~$G$. We refer to~\cite{DG} and~\cite{DP} for more details.
\item The first two statements of Theorem~\ref{thm:hitsys} hold for any reductive complex Lie group $G$ as long as the topologically type of the $G$-Higgs bundles is fixed (see in particular~\cite{DP, Faltings}).
\end{enumerate}
\end{rem}

\subsection{Interlude: Cameral versus spectral curves}\label{ss:cameral}
Let $\iota\colon G\hookrightarrow {\rm GL}(n,\C)$ be a classical semisimple complex Lie group of rank $r$ and fix free generators $\chi_1,\dots, \chi_r$ of $\C[\gfr]^G$ (cf.\ Exercise~\ref{exr:invariants}). Then we identify $\Bb(C,G)=\oplus_{j=1}^r H^0\big(C,K_C^{\otimes d_j}\big)$ for $d_j=\deg(\chi_j)$. For a given $G$-Higgs bundle $(P,\varphi)$, we define the spectral curve $\tilde{C}_{\varphi,\iota}$ as
\begin{gather}\label{eq:specclassical}
\tilde{C}_{\varphi,\iota}=\{ \alpha\in \operatorname{tot}(K_C) \,|\,\det(\iota_*(\varphi) -\alpha\cdot \mathrm{id})=0\}.
\end{gather}
It parametrizes the (ordered) eigenvalues of $\iota_*(\varphi)$ and only depends on $b=(\chi_i\circ \phi)_{i=1,\dots, r}\in \Bb(C,G)$. Hence we simply write $\tilde{C}_{b,\iota}$. Hitchin determined the isomorphism classes of the fiber $\Hit^{-1}(b)$ for generic $b\in \Bb(C,G)$ in terms of the spectral curve $\tilde{C}_{b,\iota}$ \cite{Hit2, Hit1}.
\begin{ex}\label{ex:SL2fiber} Let $\iota\colon G={\rm SL}(2,\C)\hookrightarrow {\rm GL}(2,\C)$. Clearly, $\det\colon \gfr\to \C,$ $A\mapsto \det(A)$ is $G$-invariant of degree $2$ and generates $\C[\gfr]^G$. Hence the adjoint quotient $\chi\colon \gfr\to \tfr/W$ becomes $A\mapsto \det A$. The Hitchin map is then given by
\begin{gather*}
\Hit\colon \ \Hig(C,G)\to H^0\big(C,K_C^{\otimes 2}\big),\qquad [P,\varphi]\mapsto \det \varphi
\end{gather*}
where we identify $\Bb(C,G)= H^0\big(C,K_C^{\otimes 2}\big)$. For $b\in H^0\big(C,K_C^{\otimes 2}\big)$ the spectral curve is
\begin{gather*}
\tilde{C}_{b,\iota}=\big\{ \alpha\in \operatorname{tot}(K_C)\,|\,\alpha^2-b=0 \big\}.
\end{gather*}
If $b$ has simple zeros, then $\tilde{C}_{b,\iota}$ is smooth. In that case
\begin{gather*}
\Hit^{-1}(b)\cong \operatorname{Prym}\big(\tilde{C}_{b,\iota}/C\big)
\end{gather*}
for the Prym variety $\operatorname{Prym}\big(\tilde{C}_{b,\iota}/C\big)=\{ L\in \mathrm{Jac}\,|\,\sigma^*L=L^*\}$ of the double covering $\Ct_{b,\iota}\to C$ with covering involution $\sigma\colon \tilde{C}_{b,\iota}\to \tilde{C}_{b,\iota}$.
\end{ex}
Spectral curves are very concrete and convenient to work with but can be generically reducible (e.g., for $G={\rm SO}(2n+1,\C)$) or singular (e.g., for $G={\rm SO}(2n,\C)$). These are mild difficulties because one can work with appropriate irreducible components and normalizations respectively. A more serious drawback of spectral curves is that for a general semisimple complex Lie group~$G$ they depend on a representation $\rho\colon G\to {\rm GL}(n,\C)$. In particular, if $G$ is an exceptional Lie group, there is no canonical way to construct spectral curves. Donagi introduced cameral curves~\cite{Donagi2} to resolve these issues. Given $b\in \Bb(C,G)\cong \oplus_{j=1}^r H^0\big(C,K_C^{\otimes d_j}\big)$, the cameral curve $\tilde{C}_b\to C$ is defined as the pullback
\begin{equation}\label{eq:Cb}
\begin{tikzcd}
\tilde{C}_b \ar[r] \ar[d] & K_C\otimes \tfr \ar[d, "\bm{q}"] \\
C \ar[r, "b"] & (K_C\otimes \tfr)/W,
\end{tikzcd}
\end{equation}
which is a $W$-Galois covering for the Weyl group $W$\footnote{Occasionally, such cameral curves are called $K_C$-valued cameral curves. A more general concept are abstract cameral curves, see \cite{DG} for details.}

In this subsection we explain, based on \cite{Donagi2}, how cameral curves determine all spectral curves and how they resolve the aforementioned issues. For simplicity, we restrict to the local case, i.e., $C=\C$ is the complex line. The discussion readily generalizes to arbitrary Riemann surfaces $C$ if one twists with the canonical bundle~$K_C$.

Let $G$ be any semisimple complex Lie group and $\gfr$ be its Lie algebra. Any Higgs field $\varphi\colon C\to \gfr$ (for the trivial bundle $C\times P$) and representation $\rho\colon \gfr\to \mathfrak{gl}(V)$, $\dim_{\C}(V)=n$, defines a spectral curve
\begin{gather*}
\Ct_{\varphi,\rho}:=\{ (\zeta, z)\in C\times \C\,|\,\det(\rho\circ \varphi(\zeta)-z\cdot \mathrm{id})=0 \},
\end{gather*}
compare the classical case (\ref{eq:specclassical}). Again it parametrizes the eigenvalues of $\rho\circ\varphi$ over $C$ and decomposes as follows. The decomposition $\rho=\oplus_i \rho_i$ into irreducible representations $\rho_i$ clearly induces a corresponding decomposition of $\Ct_{\varphi,\rho}$.

However, even if $\rho\colon \gfr\to \mathfrak{gl}(V)$ is irreducible, $\Ct_{\varphi,\rho}$ decomposes further: Let \mbox{$R^s(\gfr)=\{ \alpha_i\,|\,i\!\in\! I\}$} be fixed simple roots with respect to a fixed Cartan subalgebra $\tfr\subset \gfr$, and let
\begin{gather*}
\mathsf{C}:=\{ v\,|\,\langle v,\alpha_i\rangle \geq 0 \, \forall\, i\in I \}
\end{gather*}
be the closed Weyl chamber in the corresponding root space $(V(\gfr),\langle \bullet, \bullet \rangle)$. Then the weight decomposition of the representation $V$ reads as
\begin{gather*}
V=\bigoplus_{\lambda\in \mathsf{C}} \bigoplus_{\mu\in W\cdot \lambda} V_{\mu}.
\end{gather*}
Every $\lambda\in \mathsf{C}$ defines a $G$-invariant $P_{\lambda}\in \C[\gfr]^G[z]$ by the requirement
\begin{gather}\label{eq:Plambda}
P_{\lambda}(t,z)=\prod_{\mu\in W\cdot \lambda} (\mu(t)-z), \qquad \forall\, (t,z)\in \tfr\times \C
\end{gather}
cf.\ Chevalley's theorem. This defines the curve
\begin{gather*}
\Ct_{\varphi,\lambda}=V(\varphi^*P_{\lambda})\subset C\times \C,
\end{gather*}
the vanishing locus of $\varphi^*P_{\lambda}\colon C\times \C \to \C$.
\begin{lem}\label{lem:decomp} Let $\rho\colon \gfr\to \mathfrak{gl}(V)$ be an irreducible representation of the semisimple complex Lie algebra $\gfr$, $\dim_{\C}(V)=n$ and $\varphi\colon C\to \gfr$ a Higgs field. Then the spectral curve $\Ct_{\varphi,\rho}$ decomposes into irreducible components
\begin{gather*}
\Ct_{\varphi,\rho}=\coprod_{\lambda\in \mathsf{C}} m_\lambda \Ct_{\varphi,\lambda}, \qquad m_{\lambda}=\dim V_{\lambda}.
\end{gather*}
\end{lem}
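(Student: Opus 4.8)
The plan is to separate the claim into two parts: a purely formal factorization of the defining polynomial of $\Ct_{\varphi,\rho}$, and the irreducibility of each resulting factor. For the factorization I would first note that the coefficients of $\det(\rho(v)-z\cdot\mathrm{id})$, viewed as a polynomial in $z$, are $G$-invariant polynomials in $v\in\gfr$, so that $\det(\rho(\bullet)-z\cdot\mathrm{id})\in\C[\gfr]^G[z]$. By Chevalley's theorem the restriction map $\C[\gfr]^G\to\C[\tfr]^W$ is an isomorphism (coefficient by coefficient in $z$), so it suffices to verify the identity
\begin{gather*}
\det(\rho(t)-z\cdot\mathrm{id})=\prod_{\lambda\in\mathsf{C}}P_\lambda(t,z)^{m_\lambda}
\end{gather*}
on the Cartan subalgebra $\tfr$.

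On $\tfr$ the operator $\rho(t)$ acts on each weight space $V_\mu$ by the scalar $\mu(t)$, so its eigenvalues are exactly the values $\mu(t)$ of the weights $\mu$ of $V$, each with multiplicity $\dim V_\mu$. Grouping weights into $W$-orbits and invoking the standard fact that $\dim V_\mu=\dim V_{w\mu}$ for all $w\in W$ — so that $\dim V_\mu=m_\lambda$ is constant along $W\cdot\lambda$ — the characteristic polynomial on $\tfr$ becomes $\prod_\lambda\big(\prod_{\mu\in W\cdot\lambda}(\mu(t)-z)\big)^{m_\lambda}=\prod_\lambda P_\lambda(t,z)^{m_\lambda}$, which is the desired identity. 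By the injectivity of Chevalley restriction this then holds on all of $\gfr$, and pulling back along $\varphi\colon C\to\gfr$ gives $\det(\rho\circ\varphi(\zeta)-z\cdot\mathrm{id})=\prod_\lambda(\varphi^*P_\lambda)(\zeta,z)^{m_\lambda}$. Hence the defining equation of $\Ct_{\varphi,\rho}$ factors, and $\Ct_{\varphi,\rho}=\coprod_\lambda m_\lambda\,\Ct_{\varphi,\lambda}$ as a divisor in $C\times\C$, with $\Ct_{\varphi,\lambda}=V(\varphi^*P_\lambda)$ occurring with multiplicity $m_\lambda$.

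It remains to show each $\Ct_{\varphi,\lambda}$ is irreducible, which I would argue through the cameral curve. The roots of $P_\lambda$ in $z$ are the values $\{\mu(t):\mu\in W\cdot\lambda\}$, on which $W$ acts transitively by definition of the orbit; thus $P_\lambda$ is irreducible as a polynomial in $z$ over the fraction field $\C(\tfr/W)\cong\C(\gfr)^G$. After pullback by $b=\chi\circ\varphi$, the splitting field of the full characteristic polynomial over $\C(C)$ is the function field of the cameral curve $\Ct_b\to C$, whose monodromy (Galois) group is the full Weyl group $W$ for generic $\varphi$. Since $W$ permutes $\{\mu(\tilde t)\}_{\mu\in W\cdot\lambda}$ transitively, $\varphi^*P_\lambda$ remains irreducible over $\C(C)$; equivalently, $\Ct_{\varphi,\lambda}$ is birational to the quotient $\Ct_b/W_\lambda$ of the connected cameral curve by the stabilizer $W_\lambda=\mathrm{Stab}_W(\lambda)$, hence irreducible. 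The main obstacle is precisely this final step: the decomposition into the $P_\lambda$ is formal and holds for every $\varphi$, whereas the \emph{irreducibility} of the factors genuinely requires a genericity hypothesis on $\varphi$ guaranteeing that the cameral cover is connected with full monodromy $W$; without it the components $\Ct_{\varphi,\lambda}$ can split further, and controlling this is where the real care lies.
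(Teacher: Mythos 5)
Your first two paragraphs are exactly the paper's proof, just written out in full: the paper's entire argument is the single identity $\det(\rho\circ\varphi(\zeta)-z\cdot\mathrm{id})=\prod_{\lambda\in\mathsf{C}}P_{\lambda}(\varphi(\zeta)-z\cdot\mathrm{id})^{m_\lambda}$, asserted without further justification, and your verification of it (reduction to $\tfr$ via the Chevalley isomorphism $\C[\gfr]^G\cong\C[\tfr]^W$, the weight-space decomposition, and the $W$-invariance of multiplicities $\dim V_\mu=\dim V_{w\mu}$) is the correct way to fill that in. Where you depart from the paper is your third paragraph: the paper's proof says nothing at all about irreducibility of the factors $\Ct_{\varphi,\lambda}$, and you are right that this is not automatic --- for special $\varphi$ (say, $\varphi$ valued in $\tfr$) the curves $\Ct_{\varphi,\lambda}$ do split further, so the lemma's phrase ``irreducible components'' should be read as the canonical decomposition indexed by $\lambda\in\mathsf{C}$, with genuine irreducibility only under a genericity hypothesis. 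In the paper that genericity is supplied by the surrounding context rather than by the proof: immediately after the lemma the standing assumption is made that the cameral curve $\Ct_b$ is smooth, and the transversality hypothesis appears explicitly in Lemma~\ref{lem:specdecomp}, which is essentially your claim that $\Ct_{\varphi,\lambda}$ is birational to the quotient $\Ct_b/W_\lambda$ (there written $p_{b,\lambda}\colon \Ct_{b,J}\to\Ct_{b,\lambda}$). So your proposal establishes the factorization exactly as the paper does, and in addition proves (modulo the genericity you correctly flag) the irreducibility statement that the paper defers to its next lemma; the caveat you identify is real and is shared by the paper's own formulation.
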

\begin{proof}This follows from the fact that
\begin{gather*}
\det(\rho\circ \varphi(\zeta)-z\cdot \mathrm{id})=\prod_{\lambda\in \mathsf{C}} P_{\lambda}(\varphi(\zeta)-z\cdot \mathrm{id})^{m_\lambda}.\tag*{\qed}
\end{gather*}\renewcommand{\qed}{}
\end{proof}

By construction, $\Ct_{\varphi,\rho}$ and $\Ct_{\varphi,\lambda}$ only depend on $b=\chi\circ \varphi$, so we denote $\Ct_{b,\rho}=\Ct_{\varphi,\rho}$ and $\Ct_{b,\lambda}=\Ct_{\varphi,\lambda}$ from now on. It is easy to see that the cameral curve $p_b\colon \Ct_b\to C$ factorizes as
\begin{equation*}
\begin{tikzcd}
\Ct_b \ar[dd] \ar[rd] & \\
& \Ct_{b,\lambda}. \ar[ld] \\
C
\end{tikzcd}
\end{equation*}
The curve $\Ct_{b,\lambda}$ is in general singular even if~$\Ct_b$ is smooth. In the following we assume that~$\Ct_b$ is smooth and construct finitely many smooth birational models of the infinitely many curves~$\Ct_{b,\lambda}$. To do so, let $W_\lambda\subset W$ be the stabilizer group of~$\lambda$. It is generated by a subset $J\subset I$ of simple roots. We denote by
\begin{gather*}
W_{J}=\langle s_{\alpha_j}\,|\,j\in J\rangle, \quad J\subset I
\end{gather*}
the generated subgroup of $W$. Clearly, there are only finitely many such subgroups. Moreover, $W_{\lambda}=W_{\lambda'}=W_J$ iff $\lambda,\lambda'$ lie in the same face $\mathsf{C}_J=\{ v\,|\,\langle v, \alpha_j\rangle=0,\, j\in J \} $ of the closed Weyl chamber $\mathsf{C}$.
\begin{rem}The subgroups $W_J\subset W$ for $J\subset I$ are called parabolic Weyl subgroups, since they correspond to parabolic subgroups $P\subset G$, see \cite[Section~30]{Humphreys}.
\end{rem}
For every parabolic subgroup $W_J\subset W$, the quotient $\Ct_{b,J}:=\Ct_b/W_J$ is smooth as well and defines the intermediate covering
\begin{gather*}
\Ct_{b,J}\to C.
\end{gather*}
If $\lambda\in \mathsf{C}_J$, we obtain the morphism
\begin{gather}\label{eq:birational}
p_{b,\lambda}\colon \ \Ct_{b,J}\to \Ct_{b,\lambda}, \qquad (z,[t]_P)\mapsto (z, \lambda(t)).
\end{gather}
Since $\Ct_{b,J}$ is smooth, $\Ct_{b,\lambda}$ is smooth if $p_{b,\lambda}$ is an isomorphism.
\begin{lem}[\cite{Donagi2}]\label{lem:specdecomp} Let $\lambda\in \wt_{\gfr}$ be a weight and let $J\subset I$ be such that $\lambda\in \mathsf{C}_J^\circ$, the interior of the face $\mathsf{C}_J\subset \mathsf{C}$.
Further, let $b\colon C\to \tfr/W$ be transversal to the discriminant of $q\colon \tfr\to \tfr/W$. Then $p_{b,\lambda}\colon \Ct_{b,J}\to \Ct_{b,\lambda}$ is birational. It is an isomorphism if $\lambda-w\cdot \lambda$ is a multiple of a root for all $w\in W$. In that case $\lambda$ is a multiple of a fundamental weight.
\end{lem}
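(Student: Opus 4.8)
The plan is to analyze the morphism $p_{b,\lambda}\colon \Ct_{b,J}\to \Ct_{b,\lambda}$ fiber by fiber over $C$, using the transversality hypothesis to control the branching behaviour. First I would recall that $\Ct_{b,J}=\Ct_b/W_J$ parametrizes pairs $(z,[t]_{W_J})$ with $t$ a point in the cameral fiber and that the map sends such a pair to the eigenvalue $(z,\lambda(t))\in C\times\C$. The key observation is that $\Ct_{b,\lambda}$ is, by construction, the image of the cameral curve under $t\mapsto \lambda(t)$, i.e.\ it parametrizes the values $\{\mu(t)\mid \mu\in W\cdot\lambda\}$, and since $\lambda$ is $W_J$-invariant (as $\lambda\in\mathsf{C}_J^\circ$ with $W_\lambda=W_J$), the value $\lambda(t)$ depends only on the coset $[t]_{W_J}$. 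Thus $p_{b,\lambda}$ is well-defined and, because the $W/W_J$-cosets are exactly the distinct conjugates $w\cdot\lambda$ with $\lambda\in\mathsf{C}_J^\circ$ (here the assumption that $\lambda$ lies in the \emph{interior} of the face is essential, so that $W_\lambda$ is exactly $W_J$ and not larger), the generic fiber of $p_{b,\lambda}$ over $C$ is a bijection. This gives birationality.

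Next I would establish the birational claim more carefully by a degree count. Over a generic point $\zeta\in C$ the cameral fiber consists of $|W|$ points, so $\Ct_{b,J}\to C$ has degree $|W/W_J|=|W\cdot\lambda|$, which is precisely the degree of $\Ct_{b,\lambda}\to C$ (the number of distinct weights in the $W$-orbit of $\lambda$). Since $p_{b,\lambda}$ commutes with the projections to $C$ and is generically one-to-one on fibers by the previous paragraph, it is a birational morphism of curves over $C$. Here the transversality of $b$ to the discriminant of $q\colon\tfr\to\tfr/W$ guarantees that $\Ct_b$ is smooth and that the branch points are as simple as possible, so that the generic behaviour indeed holds and no unexpected identifications of weights occur away from a finite set.

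For the isomorphism statement, the plan is to show that $p_{b,\lambda}$ is an isomorphism precisely when it fails to glue distinct points of $\Ct_{b,J}$ over the ramification locus. A failure of injectivity happens exactly when two cosets $w_1W_J\neq w_2W_J$ satisfy $\lambda(w_1^{-1}t)=\lambda(w_2^{-1}t)$, i.e.\ $(w_1\cdot\lambda)(t)=(w_2\cdot\lambda)(t)$, over a branch point where $t$ degenerates. The condition that $\lambda-w\cdot\lambda$ be a multiple of a root for every $w\in W$ is exactly what forces the relevant coincidences of $\lambda$-values to be compatible with the simple (codimension-one, type $A_1$) branching dictated by transversality, so that $p_{b,\lambda}$ separates all points and is an isomorphism of smooth curves. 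The final assertion, that such a $\lambda$ must be a multiple of a fundamental weight, I would deduce purely representation-theoretically: expanding $\lambda-w\cdot\lambda$ in the basis of simple roots and imposing that each such difference (in particular for $w=s_{\alpha_i}$ the simple reflections) be a multiple of a single root forces all but one coordinate of $\lambda$ in the fundamental-weight basis to vanish.

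\textbf{Main obstacle.} I expect the hard part to be the isomorphism criterion rather than the birationality. Birationality is a clean degree-plus-generic-injectivity argument, but pinning down \emph{exactly} when $p_{b,\lambda}$ fails to be injective over the ramification locus requires a careful local analysis of the cameral cover near the branch points, matching the order of vanishing of $\lambda(t)-\lambda(t')$ for nearby sheets against the type-$A_1$ degeneration forced by transversality. Translating the geometric ``no collision of sheets'' condition into the clean algebraic statement that $\lambda-w\cdot\lambda$ is always a root multiple, and then back into ``$\lambda$ is a multiple of a fundamental weight,'' is where the real content lies.
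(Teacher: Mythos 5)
Your birationality argument is sound and is essentially the paper's proof in fiberwise form: the paper constructs the universal map $i_\lambda\colon \tfr/W_J\to(\tfr/W)_\lambda$ over $\tfr/W$, observes that it is an isomorphism away from the divisor cut out by $\prod_{w\in W\setminus W_J}(\lambda-w\cdot\lambda)$, and pulls back along $b$; your degree count plus generic injectivity on fibers amounts to the same thing (and both versions need the same implicit genericity, namely that the curve is not contained in that divisor). The first genuine gap is in your isomorphism criterion: you locate the possible failure of injectivity ``over the ramification locus'' and propose a local analysis at branch points, but this is backwards. Precisely because $b$ is transversal to the discriminant, $b$ meets the images of the extra hyperplanes $\{\lambda-w\cdot\lambda=0\}$, $w\notin W_J$, at points which are generically \emph{not} on the discriminant; so the identifications that make $\Ct_{b,\lambda}$ singular occur over points where $\Ct_b\to C$ and $\Ct_{b,J}\to C$ are unramified, while over the actual branch points the $\mathrm{A}_1$-local picture forced by transversality causes no harm. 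Concretely, for $\gfr=\mathfrak{so}(2n,\C)$ and $\lambda=\omega_1=e_1$, take $w\in W(\mathrm{D}_n)$ with $w(e_1)=-e_1$: then $\lambda-w\cdot\lambda=2e_1$ is not a multiple of a root of $\mathrm{D}_n$, and the spectral curve acquires nodes (the two branches $z=\pm e_1(t)$ crossing) exactly where $e_1(t)=0$, which for transversal $b$ are regular, unramified points of the cameral cover. The paper's formulation avoids this confusion: the sufficient condition is the non-vanishing of~(\ref{eq:failureiso}) on $\tfr^{\rm reg}/W_J$, i.e., each linear form $\lambda-w\cdot\lambda$ vanishes only inside the union of root hyperplanes; since its zero locus is a single hyperplane, it must then \emph{be} a root hyperplane, i.e., $\lambda-w\cdot\lambda$ is a multiple of a root.

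The second gap is fatal to your last step. For \emph{any} reflection one has $\lambda-s_\alpha\cdot\lambda=\langle\lambda,\alpha^\vee\rangle\,\alpha$, which is automatically a multiple of the root $\alpha$; so imposing the condition ``in particular for $w=s_{\alpha_i}$ the simple reflections,'' as you propose, imposes no constraint on $\lambda$ whatsoever, and cannot force $\lambda$ to be a multiple of a fundamental weight. The condition only bites for non-reflection elements, e.g., $w=s_\alpha s_\beta$ with $\alpha\perp\beta$, where $\lambda-w\cdot\lambda=\langle\lambda,\alpha^\vee\rangle\,\alpha+\langle\lambda,\beta^\vee\rangle\,\beta$ is a multiple of a root only in special circumstances; extracting the conclusion ``$\lambda$ is a multiple of a fundamental weight'' from the full family of such conditions is exactly \cite[Lemma~4.2]{Donagi2}, which the paper cites rather than reproves. (A smaller point: even granting injectivity everywhere, a finite birational bijection onto a possibly non-normal curve need not be an isomorphism --- think of the normalization of a cusp --- so ``separates all points'' must be supplemented by an argument on differentials or on the scheme structure of $\Ct_{b,\lambda}$.)
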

\begin{proof}Let $P_{\lambda}\colon \tfr/W\to \C[z]$ be as defined in (\ref{eq:Plambda}) and consider
\begin{gather*}
(\tfr/W)_{\lambda}:=V(P_{\lambda})=\{ ([t],z)\in \tfr/W\times \C\,|\,P_{\lambda}([t],z)=0 \}.
\end{gather*}
The morphism
\begin{gather*}
i_{\lambda}\colon \ \tfr/W_J\to (\tfr/W)_{\lambda}, \qquad [t]_J\mapsto (\lambda(t),[t]),
\end{gather*}
is defined over $\tfr/W$ (for the obvious maps $\tfr/W_J\to \tfr/W \leftarrow (\tfr/W)_{\lambda}$). It fails to be an isomorphism if there exist $[t]_J\neq [t']_J\in \tfr^{\rm reg}/W_J$ with $[t]=[t']\in \tfr^{\rm reg}/W$ and $\lambda(t)=\lambda(t')$. In particular, $i^\circ_\lambda\colon \tfr^{\rm reg}/W_J\to (\tfr^{\rm reg}/W)_{\lambda}$ is a birational morphism because it is an isomorphism on the complement of the divisor defined by
\begin{gather}\label{eq:failureiso}
\prod_{w\in W-W_J} (\lambda-w\cdot \lambda)\colon \ \tfr^{\rm reg}/W_J\to \C.
\end{gather}
Since the morphism $p_{b,\lambda}\colon \tilde{C}_{b,J}\to \tilde{C}_{b,\lambda}$ is the pullback of $\iota_\lambda$ via $b$, a similar argument shows that $p_{b,\lambda}$ is a birational morphism as well.

A sufficient condition that $p_{b,\lambda}$ is an isomorphism is the non-vanishing of~(\ref{eq:failureiso}). Equivalently, $\lambda-w\cdot \lambda$ only vanishes along $\tfr-\tfr^{\rm reg}$ for all $w\in W\setminus W_J$. But then $\lambda-w\cdot \lambda$ must be a multiple of a root for all $w\in W$. This implies that $\lambda=m\omega$ for a fundamental weight $\omega$ and $m\in \Z$, see in \cite[Lemma~4.2]{Donagi2}.
\end{proof}

\begin{ex}Let $\gfr=\mathfrak{sl}(n+1,\C)$ be the simple complex Lie algebra of type $\mathrm{A}_{n}$. The associated root space is $\R^{n+1}/\Big\langle \sum\limits_{i=1}^{n+1} e_i \Big\rangle_{\R}$ with inner product induced by the standard inner product on~$\R^{n+1}$. We choose the simple roots $\alpha_i=\bar{e}_i-\bar{e}_{i+1}$ (where $\bar{e}_i$ is the class of $e_i$) with corresponding fundamental weights $\omega_j=\sum\limits_{i=1}^j \bar{e}_i$ and closed Weyl chamber~$\mathsf{C}$. The standard representation $\rho\colon \mathfrak{sl}(n+1,\C)\hookrightarrow \mathfrak{gl}(V)$, $V=\C^{n+1}$, has weights $\omega_1,\dots, \omega_n$ and $\omega_1$ is the only weight in the closed Weyl chamber~$\mathsf{C}$.

In particular, $V=\bigoplus_{\mu\in W\cdot \omega_1} V_\mu$, i.e., the representation is minuscule and each weight space is one-dimensional. The corresponding parabolic subgroup $W_J\subset W=S_{n+1}$ is generated by $s_{\alpha_2},\dots, s_{\alpha_n}\in W$ and is hence isomorphic to~$S_n$. In particular, if $\varphi\colon C\to \mathfrak{sl}(n+1,\C)$ is generic, then we recover the familiar spectral curve:
\begin{gather*}
\tilde{C}_b/W_J \cong \tilde{C}_{\varphi,\omega_1}\cong \tilde{C}_{\varphi, \rho}, \qquad b=\chi\circ \varphi.
\end{gather*}
\end{ex}
The previous example is misleading: Among the Dynkin types $\mathrm{A}_n$, $\mathrm{B}_n$, $\mathrm{C}_n$, $\mathrm{D}_n$, $\mathrm{G}_2$ the morphism $p_{b,\lambda}\colon \tilde{C}_{b,J}\to \tilde{C}_{b,\lambda}$ is an isomorphism only for the following multiples $\lambda$ of fundamental weights (see~\cite{Donagi2} and Exercise~\ref{exr:table}):
\begin{table}[h]
\centering
\begin{tabular}{c | c}
& $\lambda$
\\
\hline
$\mathrm{A}_n$ & $\omega_1$, $\omega_n$
\\
$\mathrm{B}_n$ ($n\geq 3$) & $\omega_1$, $\omega_2$
\\
$\mathrm{C}_n$ ($n\geq 3$) & $\omega_1$, $\omega_2$
\\
$\mathrm{D}_n$ & $-$
\\
$\mathrm{G}_2 $ & $\omega_1$, $\omega_2$
\end{tabular}
\caption{Weights $\lambda$ such that $p_{b,\lambda}$ is an isomorphism. \label{table}}
\end{table}
Lemmas \ref{lem:decomp} and \ref{lem:specdecomp} show how singularities of the spectral curves $\Ct_{b,\rho}$ arise even if $\tilde{C}_{b}$ is smooth and irreducible:
\begin{enumerate}[label=\roman*)]\itemsep=0pt
\item If the representation $\rho\colon \gfr\to \mathfrak{gl}(n,\C)$ is not minuscule, then $\Ct_{b,\rho}$ is reducible. Moreover, it is non-reduced if any of the weights $\lambda$ of $\rho$ has multiplicity $m_{\lambda}>1$.
\item Even if $\rho\colon \gfr\to \mathfrak{gl}(n,\C)$ is minuscule and all its weights have $m_{\lambda}=1$, $\Ct_{b,\rho}=\Ct_{b,\lambda}$ might be singular if $\lambda-w\cdot \lambda$ is \emph{not} a multiple of a root for all $w\in W$.

If $C$ is compact and we twist $\tfr/W_P$ and $(\tfr/W)_{\lambda}$ with the canonical bundle $K_C$, then $\Ct_{b,\lambda}$ is necessarily singular for such weights. For example, this is the case for $\gfr=\mathfrak{so}(2n+1,\C)$ and the standard representation $\rho\colon \gfr\hookrightarrow \mathfrak{gl}(2n+1,\C)$ (see Table~\ref{table}) and one has to work with the normalization of $\tilde{C}_{b,\rho}$.
\end{enumerate}
Even though cameral curves are mostly better behaved than spectral curves, we point out that the latter are much more convenient for actual computations. For example, if $\gfr=\mathfrak{sl}(n,\C)$, then the degree of the covering $\Ct_b\to C$ is $|W|=(n+1)!$ whereas the degree of $\Ct_{b,\rho}\to C$ is $n+1$ for the standard representation $\rho\colon \mathfrak{sl}(n,\C)\hookrightarrow \mathfrak{gl}(n,\C)$.

\subsection{Isogeny class of generic Hitchin fibers}
After this short interlude, we give the isogeny class of generic Hitchin fibers in terms of the cameral curves $\tilde{C}_b$, $b\in \Bb$, cf.~(\ref{eq:Cb}). These are smooth if
\begin{gather*}
b\in \Bb^\circ:=\{ b\in \Bb \,|\,b\mbox{ is transversal to }\mathrm{disc}(\qf)\},
\end{gather*}
where $\mathrm{disc}(\qf)$ is the discriminant locus of $\qf\colon K_C\otimes \tfr\to (K_C\otimes \tfr)/W$. It is proven in~\cite{Sco1} that $\Bbo\subset \Bb$ is a Zariski-dense open subset by using Bertini's theorem.

Finally, we need the cocharacter lattice
\begin{gather*}
\bLambda_G:=\operatorname{Hom}(\C^*, T),
\end{gather*}
where $T\subset G$ is a fixed maximal torus.

\begin{thm}[\cite{Sco1}]\label{thm:isogenoushit} Let $G$ be a semisimple complex Lie group and $C$ a compact Riemann surface of genus $g\geq 2$. Then the abelian variety $\mathsf{P}_b\cong \Hit^{-1}(b)$ is isogenous to $(\mathrm{Jac}(\tilde{C}_b)\otimes_{\Z} \bLambda_G)^W$ where the Weyl group acts diagonally from the left.
\end{thm}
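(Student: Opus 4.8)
The plan is to identify $\mathsf{P}_b \cong \Hit^{-1}(b)$ with an abelian variety built from the cameral curve $\tilde{C}_b \to C$ and then compute its isogeny type via characters of $W$. The starting point is that, once a Hitchin section is chosen, the Hitchin fiber over $b \in \Bbo$ is (by the theory reviewed in Section~\ref{s:hit}) a torsor under the group of $T$-bundles on $C$ whose associated cameral data is $W$-equivariant; concretely, one expects $\mathsf{P}_b$ to be given by the $W$-invariants of $\operatorname{Pic}^0(\tilde{C}_b) \otimes_{\Z} \bLambda_G$, where $W$ acts simultaneously on $\tilde{C}_b$ (by the Galois action of the covering~(\ref{eq:Cb})) and on the cocharacter lattice $\bLambda_G$ (by the standard Weyl action). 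Since $\operatorname{Pic}^0(\tilde{C}_b) = \mathrm{Jac}(\tilde{C}_b)$, this already produces the group $(\mathrm{Jac}(\tilde{C}_b) \otimes_{\Z} \bLambda_G)^W$ appearing in the statement, so the content of the theorem is to show the fiber is \emph{isogenous} to this, not that it is equal on the nose.

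First I would make precise the description of a generic Hitchin fiber as a space of $W$-equivariant line bundle data on $\tilde{C}_b$. The key geometric input is that a $G$-Higgs bundle whose Hitchin image is $b$ determines, via the cameral cover, a $W$-equivariant $T$-bundle on $\tilde{C}_b$ together with compatibility conditions along the ramification divisor; conversely such data reconstruct the Higgs bundle up to the relevant equivalence. Passing to the associated groups, a $T$-bundle is the same as an element of $\operatorname{Pic}(\tilde{C}_b) \otimes_{\Z} \bLambda_G$ (using $T = \bLambda_G \otimes_{\Z} \C^*$), and the $W$-equivariance is encoded by taking the diagonal $W$-invariants. The degree-zero part then yields the abelian variety, and I would record that the ramification/compatibility conditions only alter the answer by a finite group, which is precisely why the result is stated up to isogeny.

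Next I would extract the isogeny type by decomposing over $\Q$. Tensoring with $\Q$ kills finite kernels, so it suffices to compute $(\mathrm{Jac}(\tilde{C}_b) \otimes_{\Z} \bLambda_G)^W$ up to isogeny, and here one uses the representation theory of $W$: the $\Q[W]$-module $\bLambda_G \otimes \Q$ decomposes into irreducibles, and correspondingly $\mathrm{Jac}(\tilde{C}_b) \otimes \Q$ decomposes under the Galois $W$-action into isotypic pieces. Taking $W$-invariants of the tensor product then pairs up matching isotypic components, and standard facts about $W$-equivariant abelian varieties (idempotents in $\Q[W]$ giving rise, after clearing denominators, to genuine isogeny factors) show that this $\Q$-linear-algebra computation lifts to an honest isogeny of abelian varieties. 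The smoothness of $\tilde{C}_b$ for $b \in \Bbo$ guarantees that $\mathrm{Jac}(\tilde{C}_b)$ is a genuine abelian variety carrying a $W$-action, which is what makes these manipulations legitimate.

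The main obstacle I expect is the first step: rigorously establishing the $W$-equivariant correspondence between points of the generic Hitchin fiber and $W$-invariant elements of $\operatorname{Pic}(\tilde{C}_b) \otimes_{\Z} \bLambda_G$, including a careful account of the boundary/ramification conditions along the branch locus of $\tilde{C}_b \to C$. This is exactly the subtle point flagged in Remark~\ref{rem:isogenyclass}: the \emph{isomorphism} class of $\mathsf{P}_b$ depends on $\pi_1(G)$ through these finite correction data, whereas the \emph{isogeny} class depends only on $\gfr$. Since we only claim an isogeny, I can afford to be cavalier about the finite discrepancies and simply absorb them after tensoring with $\Q$; this is what makes the theorem tractable at the present level of generality, and it is why I would cite the detailed construction from~\cite{Donagi2, DG, Sco1} rather than reprove the full equivariant dictionary here.
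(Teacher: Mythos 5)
The paper offers no proof of this theorem at all: it is quoted directly from Scognamillo~\cite{Sco1}, with the same deferral to the literature that you make for the key abelianization dictionary. Your sketch --- identifying the generic fiber with $W$-equivariant $T$-bundle data on the cameral curve via $T\cong \bLambda_G\otimes_{\Z}\C^*$, so that the degree-zero part gives $(\mathrm{Jac}(\tilde{C}_b)\otimes_{\Z}\bLambda_G)^W$, and absorbing the finite ramification corrections (the source of the $\pi_1(G)$-dependence noted in Remark~\ref{rem:isogenyclass}) into the isogeny --- is exactly the route of \cite{Sco1} and \cite{DG}, so your proposal is correct and follows essentially the same approach as the paper's cited source.
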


\begin{ex}We compare Theorem \ref{thm:isogenoushit} with Example~\ref{ex:SL2fiber} in case $G={\rm SL}(2,\C)$. Let $T\subset G$ be the maximal torus of diagonal matrices so that $\bLambda_G\cong \Z$. The Weyl group $W=\Z/2\Z$ acts on $\mathrm{Jac}(\tilde{C}_b)$ via pullback and on $\bLambda_G$ in the natural way.
It follows that
\begin{gather*}
\mathrm{Jac}\big(\tilde{C}_b\otimes_{\Z} \bLambda_G\big)^W\cong \operatorname{Prym}\big(\tilde{C}_b/C\big),\qquad b\in \Bb^\circ.
\end{gather*}
Hence Theorem \ref{thm:isogenoushit} gives the isomorphism class of generic Hitchin fibers in this case. However, this is false in general.
\end{ex}

\subsection{Abstract Seiberg--Witten differential}\label{ss:hitabsw}
We next determine the holomorphic symplectic structure of $\Hig(C,G)$ over $\Bbo$ in terms of an abstract Seiberg--Witten differential~(\ref{ss:absw}). In order to do so, we first give a polarizable $\Z$-VHS $\VH^H$ of weight $1$ over $\Bbo$ which is isogenous to the polarizable $\Z$-VHS $\VH(\Hit)$ defined by $\Hit$. The $\Z$-VHS $\VH^H$ is defined via the universal cameral curve
\begin{gather*}
\bm{p}\colon \ \tC:=\operatorname{ev}^* (K_C\otimes \tfr)\to C\times \Bb \to \Bb
\end{gather*}
for the evaluation map $\operatorname{ev} \colon C\times \Bb\to (K_C\otimes \tfr)/W$. It is clear that $\tC_b=\tilde{C}_b$ as defined in~(\ref{eq:Cb}). If $\bLambda_G$ is the cocharacter lattice of the semisimple complex Lie group $G$, then we define the polarizable $\Z$-VHS
\begin{gather*}
\VH^H=\big( (\VH_{\Z}(\bm{p}^\circ)\otimes \bLambda_G)^W, F^\bullet (\VH_{\Oo}(\bm{p}^\circ)\otimes \tfr)^W \big)
\end{gather*}
of weight $1$ where $W$ acts diagonally from the left. It is shown in \cite{Beck} that $\VH^H$ is isogenous to\footnote{Formally, there is an issue with the weights of the VHS which we neglect in these notes. We refer the reader to \cite{Beck} for more details.}~$\VH(\Hit)^*$. By Theorem~\ref{thm:abSW}, any abstract Seiberg--Witten differential of~$\VH^H$ induces one on~$\VH(\Hit)$. A natural candidate is the section
\begin{gather*}
\bm{\lambda}_{\rm SW}\colon \ \Bbo\to F^1\VH^H_{\Oo}, \qquad \bm{\lambda}_{\rm SW}(b)=\lambda_b\in H^0\big(\tilde{C}_b, K_{\tilde{C}_b}\otimes \tfr\big)^W,
\end{gather*}
where $\lambda_b$ is the restriction of the tautological section on the total space of $K_C\otimes \tfr$ to $\tilde{C}_b$.

\begin{thm}[{\cite[Corollary 2]{Beck}}] Let $C$ be a compact Riemann surface of genus $g\geq 2$ and~$G$ a semisimple complex Lie group. Then $\bm{\lambda}_{\rm SW}\in H^0\big(\Bbo,F^1\VH^{H}_{\Oo}\big)$ is an abstract Seiberg--Witten differential and
\begin{gather*}
(\mathcal{J}(\VH_H), \omega_{\bm{\lambda}_{\rm SW}})\cong (\Hig^\circ(C,G),\omega_H)
\end{gather*}
as smooth algebraic integrable systems over $\Bbo$.
\end{thm}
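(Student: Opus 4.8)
The plan is to establish the two assertions in turn. By Proposition~\ref{thm:abSW}, once we know that $\blambda_{\mathrm{SW}}$ satisfies the isomorphism condition~(\ref{eq:condabsw}) we automatically obtain a holomorphic symplectic form $\omega_{\blambda_{\mathrm{SW}}}$ on $\mathcal{J}(\VH^H)$ making the zero section Lagrangian; and since $\VH^H$ is isogenous to $\VH(\Hit)^*$ (suppressing the Tate-twist subtlety noted in the footnote), the isogeny-invariance clause of the same proposition transports this structure across to the Hitchin fibration. The genuine content of the theorem is therefore (i) verifying condition~(\ref{eq:condabsw}) for $\blambda_{\mathrm{SW}}$, and (ii) identifying the abstractly produced form $\omega_{\blambda_{\mathrm{SW}}}$ with the canonical Hitchin form $\omega_H$.

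For step (i) I would compute the Gauss--Manin derivative $\nabla_v \blambda_{\mathrm{SW}}$ explicitly, using the identifications $F^1\VH^H_{\Oo,b} = H^0\big(\tilde C_b, K_{\tilde C_b}\otimes\tfr\big)^W$ and $T_b\Bbo = \Bb = \bigoplus_j H^0\big(C,K_C^{\otimes d_j}\big)$. A tangent vector $v$ deforms the classifying map $b\colon C\to (K_C\otimes\tfr)/W$ and hence moves the cameral curve $\tilde C_b$ inside $\operatorname{tot}(K_C\otimes\tfr)$; differentiating the tautological section $\lambda_b$ along this deformation expresses $\nabla_v\blambda_{\mathrm{SW}}$ as a $W$-equivariant Poincar\'e residue of $v$ along $\tilde C_b$. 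This is precisely the globalization of the model computation behind the elliptic Seiberg--Witten differential and of Example~\ref{ex:SL2fiber}: for $G={\rm SL}(2,\C)$ one has $\tilde C_b=\{\alpha^2=b\}$ and the derivative sends $v$ to the anti-invariant form $v/(2\alpha)\big|_{\tilde C_b}$. The transversality $b\in\Bbo$ (i.e.\ $b$ meets $\mathrm{disc}(\qf)$ transversally) is exactly what guarantees the residue is everywhere holomorphic, so that $\phi_{\blambda_{\mathrm{SW}}}$ is well defined; I would then show it is an isomorphism by checking injectivity and comparing dimensions, both sides having dimension equal to that of the Hitchin base $\Bb$, since the generic fiber is Lagrangian.

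For step (ii), which I expect to be the main obstacle, the cleanest route is to invoke the uniqueness clause of Proposition~\ref{thm:abSW}: $\omega_{\blambda_{\mathrm{SW}}}$ is characterized as the unique symplectic form on $\mathcal{J}(\VH^H)$ for which the zero section is Lagrangian and which induces the map $\iota=\phi_{\blambda_{\mathrm{SW}}}^*\circ\psi_Q\colon\mathcal{V}\to T^*\Bbo$. It then suffices to check that, under the isomorphism $\mathcal{J}(\VH^H)\cong\Hig^\circ(C,G)$ of abelian fibrations refining the isogeny of Theorem~\ref{thm:isogenoushit}, the canonical form $\omega_H$ \emph{also} makes a Hitchin section Lagrangian and induces the \emph{same} $\iota$. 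Concretely, I would pair the deformation-theoretic description of $\omega_H$ on $\Hig^\circ(C,G)$ against the residue pairing defining $\iota$ and verify they agree; in action--angle language this amounts to showing that the periods $\int_\gamma\blambda_{\mathrm{SW}}$ furnish the action coordinates of the Hitchin system, so that $\omega_H=\sum_i \mathrm{d}a_i\wedge \mathrm{d}\theta_i$ coincides with the canonical form descended to $T^*\Bbo/\iota(\VH_\Z)$. Uniqueness then forces $\omega_{\blambda_{\mathrm{SW}}}=\omega_H$, completing the identification as smooth algebraic integrable systems over $\Bbo$. The delicate point throughout is the upgrade from the mere isogeny of Theorem~\ref{thm:isogenoushit} to a genuine isomorphism of fibrations compatible with the tautological section, which is where the finer analysis of \cite{Beck} (depending on the choice of maximal torus and the cocharacter lattice $\bLambda_G$) is needed.
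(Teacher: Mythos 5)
The paper does not actually prove this theorem---it is imported verbatim from \cite[Corollary~2]{Beck}---so your proposal can only be measured against the strategy the surrounding text sets up. Against that, your step~(i) is essentially the intended argument: reduce the first assertion to condition~(\ref{eq:condabsw}) via Proposition~\ref{thm:abSW}, compute $\nabla_v\blambda_{\rm SW}$ as a residue-type expression generalizing the ${\rm SL}(2,\C)$ model $v\mapsto v/(2\alpha)$, use transversality to $\mathrm{disc}(\qf)$ to get holomorphy (the simple poles at ramification are absorbed into $K_{\tilde{C}_b}=p^*K_C\otimes\Oo(R)$), and conclude by injectivity together with $\dim H^0\big(\tilde{C}_b,K_{\tilde{C}_b}\otimes\tfr\big)^W=\dim\Bb$.

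The genuine gap is in step~(ii). You presuppose an isomorphism of fibrations $\mathcal{J}(\VH^H)\cong\Hig^\circ(C,G)$ ``refining the isogeny of Theorem~\ref{thm:isogenoushit}'' and only then invoke the uniqueness clause of Proposition~\ref{thm:abSW}. But that isomorphism \emph{is} the substantive content of the displayed claim, and none of the tools you cite can deliver it: Theorem~\ref{thm:isogenoushit} is an isogeny statement only; the ``Moreover'' clause of Proposition~\ref{thm:abSW} transports symplectic forms along isogenies but never produces an isomorphism of total spaces; and the paper explicitly warns, in the ${\rm SL}(2,\C)$ example following Theorem~\ref{thm:isogenoushit}, that reading off the \emph{isomorphism} class of $\Hit^{-1}(b)$ from $\big(\mathrm{Jac}(\tilde{C}_b)\otimes_{\Z}\bLambda_G\big)^W$ is false in general. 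Indeed, the two tori you want to identify do not even coincide: taking $W$-invariants does not commute with passing to the quotient torus, so the fiber $\mathcal{J}(\VH^H)_b$ is only the identity component of $\big(\mathrm{Jac}(\tilde{C}_b)\otimes_{\Z}\bLambda_G\big)^W$, the discrepancy being controlled by the finite group $H^1\big(W,H^1(\tilde{C}_b,\Z)\otimes\bLambda_G\big)$. Supplying the fibration isomorphism requires the cameral abelianization of $\Hig^\circ(C,G)$ due to Donagi--Gaitsgory and Donagi--Pantev \cite{DG,DP}, with the integral structure matched through the cocharacter lattice $\bLambda_G$ and through the dualization hidden in the statement that $\VH^H$ is isogenous to $\VH(\Hit)^*$; this is precisely the ``finer analysis'' you defer to \cite{Beck}, i.e., to the source of the very statement being proven, so as a standalone argument the crux is missing. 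Granting that input, your closing move---identifying $\omega_{\blambda_{\rm SW}}$ with $\omega_H$ via uniqueness, the Lagrangian Hitchin section, and action coordinates given by periods of $\blambda_{\rm SW}$---is the right way to finish.
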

\begin{cor} The holomorphic symplectic form $\omega_H$ is exact on $\Hig^\circ(C,G)$.
\end{cor}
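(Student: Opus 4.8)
The plan is to deduce this corollary as an immediate consequence of the two results that precede it. The previous theorem establishes that $\bm{\lambda}_{\rm SW}\in H^0(\Bbo,F^1\VH^H_{\Oo})$ is an abstract Seiberg--Witten differential and that there is an isomorphism of smooth algebraic integrable systems $(\mathcal{J}(\VH^H),\omega_{\bm{\lambda}_{\rm SW}})\cong(\Hig^\circ(C,G),\omega_H)$ over $\Bbo$. Meanwhile, Corollary~\ref{cor:exactness} asserts precisely that whenever a polarized $\Z$-VHS of weight $1$ admits an abstract Seiberg--Witten differential, the induced holomorphic symplectic structure on the total space of $\mathcal{J}(\VH^H)$ is exact. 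So the strategy is simply to chain these two facts together.

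First I would invoke the preceding theorem to obtain the abstract Seiberg--Witten differential $\bm{\lambda}_{\rm SW}$ for the $\Z$-VHS $\VH^H$, together with the symplectomorphism $\Phi\colon(\mathcal{J}(\VH^H),\omega_{\bm{\lambda}_{\rm SW}})\xrightarrow{\ \sim\ }(\Hig^\circ(C,G),\omega_H)$. Next I would apply Corollary~\ref{cor:exactness} to $(\VH^H,Q)$ and $\bm{\lambda}_{\rm SW}$: this yields a holomorphic $1$-form $\gamma$ on the total space of $\mathcal{J}(\VH^H)$ with $d\gamma=\omega_{\bm{\lambda}_{\rm SW}}$, exactly as constructed in the proof of that corollary via $\gamma(w)=Q(s,\nabla_{dp(w)}\lambda)-df_{[s]}(w)$. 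Finally I would transport this primitive through the symplectomorphism: setting $\gamma_H:=(\Phi^{-1})^*\gamma$, one gets $d\gamma_H=(\Phi^{-1})^*\omega_{\bm{\lambda}_{\rm SW}}=\omega_H$ since $\Phi$ pulls $\omega_H$ back to $\omega_{\bm{\lambda}_{\rm SW}}$. Hence $\omega_H$ is exact on $\Hig^\circ(C,G)$.

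There is essentially no serious obstacle here, since the heavy lifting has already been done: the exactness mechanism is Corollary~\ref{cor:exactness}, and the identification of the two integrable systems is the content of the preceding theorem. The only point requiring a word of care is that exactness is a statement about the total space of the integrable system, so one must ensure the isomorphism $\Phi$ is an isomorphism of the ambient holomorphic symplectic manifolds (not merely of the torus fibrations), which is exactly what ``as smooth algebraic integrable systems'' guarantees. One might also note, for completeness, that $\Hig^\circ(C,G)$ coincides with $\mathcal{J}(\VH^H)$ only over $\Bbo$; since exactness of a form is a local-to-global statement about a $d$-closed primitive on the given space, this restriction to the smooth locus is harmless for the assertion as stated. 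With these remarks, the corollary follows directly, and I would keep the write-up to a single short sentence combining the preceding theorem with Corollary~\ref{cor:exactness}.
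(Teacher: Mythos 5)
Your proposal is correct and is exactly the paper's argument: the author likewise deduces the corollary in one line by combining Corollary~\ref{cor:exactness} with the preceding theorem. Your additional remarks (transporting the primitive through the symplectomorphism $\Phi$, and noting that the isomorphism is one of ambient holomorphic symplectic manifolds over $\Bbo$) merely make explicit what the paper leaves implicit.
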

\begin{proof}This follows directly from Corollary \ref{cor:exactness} and the previous theorem.
\end{proof}

This fact was known before (in fact on all of $\Hig(C,G)$) from the construction of \linebreak $\Hig(C,G)$. In contrast, our proof simply follows from the properties of the algebraic integrable system.

\subsection{Exercises}

\begin{exr}\label{exr:invariants}Let $G\subset {\rm GL}(n,\C)$ be a classical semisimple complex Lie group and $\gfr=\operatorname{Lie}(G)$.
\begin{enumerate}[label=\alph*)]\itemsep=0pt
\item Find explicit generators of $\C[\gfr]^G$. (Hint: If $\gfr=\mathfrak{so}(2m,\C)$, then $\det\in \C[\gfr]^G$ is the square root of the Pfaffian $\mathrm{pf}\in \C[\gfr]^G$.)
\item Specialize to $G={\rm SL}(n,\C)$. Determine the fibers of $\chi\colon \gfr\to \tfr/W$ where $\tfr\subset \gfr$ is the Cartan subalgebra of diagonal matrices in $\gfr$.
\item Check explicitly for $n=2$ that $d\chi_A\colon \mathfrak{sl}(2,\C)\to \tfr/W\cong\C$ is surjective iff $A\in \mathfrak{sl}(2,\C)$ is regular.
\end{enumerate}
\end{exr}

\begin{exr}\label{exr:table}Check Table \ref{table}.
\end{exr}

\section{Relation between Calabi--Yau integrable and Hitchin systems}\label{s:relation}
In the last two sections we have seen two complex integrable systems, namely compact Calabi--Yau integrable systems $\mathcal{J}^2(\Xt)\to \tilde{B}$ and Hitchin systems $\Hig(C,G)\to \Bb$.
A fundamental difference between the two is that the former is of index $1$ whereas the latter is of index $0$.
Hence they cannot be isomorphic to each other.

In this section we outline the construction of an algebraic integrable system out of non-compact Calabi--Yau threefolds, called non-compact Calabi--Yau integrable systems, and show that they are isomorphic to $G$-Hitchin systems. The first instances of such isomorphisms goes back to \cite{DDD, DDP} where $G$ is the simple adjoint complex Lie group with $\mathrm{ADE}$-Dynkin diagram. To `geometrically engineer' the simple complex group $G$ in general, we need (an extended version of) the McKay correspondence and Slodowy slices that are summarized in the following two subsections.

\subsection{McKay correspondence}
One part of the McKay correspondence \cite{McKay} is a bijection\footnote{In fact, this part was already known to du Val~\cite{duVal}; the full McKay correspondence~\cite{McKay} takes into account the irreducible representations of the groups as well.} between finite subgroups $\Gamma\subset {\rm SL}(2,\C)$ and irreducible $\ADE$-Dynkin diagrams. We next explain how this correspondence extends to arbitrary irreducible Dynkin diagrams following Slodowy~\cite{Slo}.

Let $\Delta$ be any irreducible Dynkin diagram. It is obtained by an irreducible $\ADE$-Dynkin diagram $\Delta_h$ via folding by graph automorphisms. More precisely, we need a special class of graph automorphisms.

\begin{dfn}Let $\Delta$ be an irreducible Dynkin diagram. A Dynkin graph automorphism is a graph automorphism $\tau\in \Aut(\Delta)$ such that $\alpha$ and $\tau(\alpha)$ are not neighbors for every vertex $\alpha\in \Delta$.
We denote by $\Aut_D(\Delta)\subset \Aut(\Delta)$ the subgroup of Dynkin graph automorphisms.
\end{dfn}

This condition is best understood on the level of root systems. Let $(R, (V,\langle \bullet, \bullet \rangle))$ be the root system $R=R(\Delta)$ corresponding to $\Delta$ and denote by $Q=\langle R \rangle_{\Z}\subset V$ the abelian subgroup generated by $R$. If $\tau \in \Autd(\Delta)$, then we define $\aD\in {\rm GL}(V)$ by
\begin{gather*}
\tau(e_{\alpha})=e_{\tau(\alpha)},
\end{gather*}
where $e_\alpha\in V$ is the basis vector corresponding to $\alpha\in R$. Hence $\tau$ is a Dynkin graph automorphism iff
\begin{gather*}
\langle \tau(\alpha), \alpha \rangle =0.
\end{gather*}
It is not difficult to see that
\begin{gather*}
\Aut_D(\Delta)=
\begin{cases}
1, & \Delta=\mathrm{A}_{2n},\\
\Aut(\Delta), & \Delta\neq \mathrm{A}_{2n}.
\end{cases}
\end{gather*}
In particular, for every subgroup $\Cc\subset \Autd(\Delta)$, there is an element $\tau \in \Cc$ of maximal order. Let $Q^\tau\subset Q=\langle R \rangle_{\Z}$ be the invariants and define
\begin{gather}\label{eq:aO}
R^{\Cc}:=R^\tau=\bigg\{ \alpha_O:=\sum_{\alpha'\in O(\alpha)} \alpha'~\Bigg|~\alpha\in R\bigg\}\subset Q^{\aD},
\end{gather}
where $O(\alpha)$ denotes the orbit of $\alpha\in R$ under $\aD$.
\begin{rem}
Even though an element $\tau\in \Cc$ of maximal order is not unique, $R^{\Cc}$ is well-defined.
\end{rem}
For the next lemma we introduce the group
\begin{gather*}
{\rm AS}(\Delta)=
\begin{cases}
\Z/2\Z,& \Delta=\mathrm{B}_k, \,\mathrm{C}_k,\, \mathrm{F}_4,\\
S_3,& \Delta=\mathrm{G}_2, \\
1,& \mbox{else}
\end{cases}
\end{gather*}
of symmetries associated with $\Delta$.

\begin{lem}\label{lem:folding}The mapping $(R,\Cc) \mapsto R^{\Cc}$ on the level of root systems induces the bijection\footnote{We borrow Slodowy's notation here and use the subscript $h$ in $\Delta_h$ for `homogeneous' which he uses synonymous to `simply-laced'.}
\begin{gather*}
(\Delta_h,\Cc)\mapsto \Delta=\Delta_h^\Cc, \\
(\Delta_h,{\rm AS}(\Delta)) \mapsfrom \Delta,
\end{gather*}
where the non-trivial cases are summarized in the following table
\begin{gather}\label{table:folding}
\begin{array}{@{}c|c@{}}
(\Delta_h,\Cc) & \Delta=\Delta_h^{\Cc} \\ \hline
(\mathrm{A}_{2k+1},\Z/2\Z) & \mathrm{B}_{k+1} \\
(\mathrm{D}_{k+1}, \Z/2\Z) & \mathrm{C}_{k} \\
(\mathrm{E}_6, \Z/2\Z) & \mathrm{F}_4 \\
(\mathrm{D}_4, S_3) & \mathrm{G}_2.
\end{array}
\end{gather}
In particular, any irreducible Dynkin diagram $\Delta$ of type $\mathrm{B}_k$, $\mathrm{C}_k$, $\mathrm{F}_4$, $\mathrm{G}_2$ $(\mathrm{BCFG}$-Dynkin diagram for short$)$ is obtained by folding of $(\Delta_h, \Cc)$ for a unique $\ADE$-Dynkin diagram $\Delta_h$ and subgroup $\Cc\subset \Autd(\Delta_h)$.
\end{lem}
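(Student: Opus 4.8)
The plan is to prove the lemma by a finite case-by-case computation organized by the classification of simply-laced Dynkin diagrams and their automorphisms, reducing everything to reading off Cartan matrices of orbit-sums.

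First I would pin down the source data. Recalling the automorphism groups of the $\ADE$-diagrams, one has $\Aut(\Delta_h)=1$ except for $\Delta_h\in\{\mathrm{A}_n\ (n\geq 2),\,\mathrm{D}_n\ (n\geq 4),\,\mathrm{E}_6\}$, where it equals $\Z/2\Z$, with the sole exception of $\Delta_h=\mathrm{D}_4$, whose automorphism group is $S_3$. Feeding this into the explicit description of $\Autd(\Delta_h)$ already recorded in the excerpt (which kills only the $\mathrm{A}_{2n}$ symmetry), the pairs $(\Delta_h,\Cc)$ with $\Cc\subset\Autd(\Delta_h)$ giving a nontrivial maximal-order element are exactly $(\mathrm{A}_{2k+1},\Z/2\Z)$, $(\mathrm{D}_{k+1},\Z/2\Z)$, $(\mathrm{E}_6,\Z/2\Z)$ and $(\mathrm{D}_4,S_3)$, matching the left-hand column of Table~\ref{table:folding}.

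Second, and this is the heart of the argument, I would compute the folded root system $R^\Cc$ in each of these four cases. Fixing an $\aD$-stable base, the simple roots of $R^\Cc$ are the orbit-sums $\alpha_O=\sum_{\alpha'\in O(\alpha)}\alpha'$ of \eqref{eq:aO} taken over the $\aD$-orbits of simple roots. The key simplification is that the Dynkin condition $\langle \aD(\alpha),\alpha\rangle=0$ forces the simple roots in each orbit to be pairwise orthogonal, so that $\langle \alpha_O,\alpha_O\rangle=|O(\alpha)|\,\langle\alpha,\alpha\rangle$. Hence orbits of size $1$, $2$, $3$ produce simple roots of squared length $2$, $4$, $6$ respectively, and computing the pairwise inner products of the orbit-sums yields the Cartan matrix. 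Carrying this out shows that size-$2$ orbits create the short-to-long length ratio $\sqrt{2}$ of types $\mathrm{B}$, $\mathrm{C}$, $\mathrm{F}$, while the single size-$3$ orbit of the outer nodes for $(\mathrm{D}_4,S_3)$ creates the ratio $\sqrt{3}$ of type $\mathrm{G}_2$; counting orbits fixes the ranks. This verifies all four entries of Table~\ref{table:folding}.

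Finally, I would assemble the bijection. Folding the four canonical pairs of Table~\ref{table:folding} produces the four $\mathrm{BCFG}$-types, each exactly once (the integer $k$ being recovered from the rank within the $\mathrm{B}$- and $\mathrm{C}$-families), so folding is injective on these pairs. For the inverse I would read $\Delta_h$ off the table and check that the accompanying group is precisely the symmetry group ${\rm AS}(\Delta)$ introduced before the lemma; since ${\rm AS}(\mathrm{B}_k)={\rm AS}(\mathrm{C}_k)={\rm AS}(\mathrm{F}_4)=\Z/2\Z$ and ${\rm AS}(\mathrm{G}_2)=S_3$, the assignment $\Delta\mapsto(\Delta_h,{\rm AS}(\Delta))$ is a two-sided inverse to folding, and adjoining the trivial foldings $(\Delta,1)\mapsto\Delta$ of the simply-laced diagrams extends this to all irreducible Dynkin diagrams. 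The main obstacle is the second step: confirming that the orbit-sums assemble into a root system of \emph{precisely} the claimed type, rather than merely matching in rank and in the coarse long/short dichotomy, requires pinning down every Cartan integer. The $(\mathrm{D}_4,S_3)\mapsto\mathrm{G}_2$ case is the most delicate, since one must verify that the resulting angle between the two simple roots is the $150^\circ$ of $\mathrm{G}_2$, and since $R^\Cc$ depends only on the maximal-order element the cyclic subgroup $\Z/3\Z$ folds identically to $S_3$; a clean bijection therefore forces the canonical choice $S_3={\rm AS}(\mathrm{G}_2)$.
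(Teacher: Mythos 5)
Your first and third steps are sound and essentially parallel what the paper leaves implicit: the enumeration of admissible pairs $(\Delta_h,\Cc)$, the matching with ${\rm AS}(\Delta)$, and your closing observation that $\Z/3\Z\subset S_3$ folds $\mathrm{D}_4$ to the same $R^{\Cc}$ (so that a clean bijection forces the canonical choice $\Cc={\rm AS}(\mathrm{G}_2)=S_3$) are all correct. The gap sits in what you call the heart of the argument. You declare that ``the simple roots of $R^{\Cc}$ are the orbit-sums'' of the simple roots and then read off the type from their Cartan matrix. This presupposes exactly what has to be proved: that $R^{\Cc}$, i.e., the set of orbit-sums of \emph{all} roots as in~(\ref{eq:aO}), is a root system at all, and that the orbit-sums of an $\aD$-stable base form a base of it. The paper's proof is devoted to precisely this point: besides the orthogonality computation you also make (which gives $0\notin R^{\aD}$ and $\langle\alpha_O,\alpha_O^\vee\rangle=2$), the crucial step is stability of $R^{\aD}$ under the reflections $s_{\alpha_O}$, obtained from the identity $s_{\alpha_O}=\prod_{\alpha'\in O(\alpha)}s_{\alpha'}$ of~(\ref{SloFormulaFolding}) --- an identity that holds only because the Dynkin condition makes the roots in each orbit pairwise orthogonal, so the factors commute --- combined with $\aD s_{\alpha}\aD^{-1}=s_{\aD(\alpha)}$ to conclude $s_{\alpha_O}(\beta_O)\in R^{\aD}$ for every root $\beta$. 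Without this step, your Cartan-matrix calculation identifies the type of the root system \emph{generated} by the folded base, but gives no reason why that system should coincide with $R^{\Cc}$; in particular the entries of Table~\ref{table:folding}, which are assertions about $R^{\Cc}$ itself, are not yet verified.

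The repair is either to prove the product formula~(\ref{SloFormulaFolding}) (Exercise~\ref{exr:SloFormula}) and argue as the paper does --- at which point your explicit computation of the folded Cartan matrices is a legitimate fleshing-out of the step the paper dismisses as ``straightforward,'' since simple roots of $R$ give simple roots of $R^{\aD}$ --- or to verify case by case that the set of orbit-sums of all roots coincides with the root system generated by the folded base, for instance by listing both sides (for $(\mathrm{A}_{2k+1},\Z/2\Z)$ one can check that the $2(k+1)^2$ orbit-sums are exactly the roots of a $\mathrm{B}_{k+1}$-system). Either way, some argument about all of $R^{\Cc}$, and not only about the images of the simple roots, cannot be avoided.
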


\begin{proof}We first consider $R^{\aD}\subset V^{\aD}$. It is clear that $R^{\aD}$ spans $V^{\aD}$. By definition of Dynkin graph automorphisms, we compute
\begin{gather*}
\langle \alpha_O,\alpha_O\rangle:= \sum_{\alpha'\in O(\alpha)} \langle \alpha',\alpha'\rangle\neq 0,
\end{gather*}
cf.~(\ref{eq:aO}), hence $0\notin R^{\aD}$ and further
\begin{gather*}
\langle \alpha_O,\alpha_O^\vee \rangle=2.
\end{gather*}
It remains to show that the reflections $s_{\alpha_O}\colon V^{\aD}\to V^{\aD}$ preserve $R^\aD$. Then we claim
\begin{gather}\label{SloFormulaFolding}
s_{\alpha_O}=\prod_{\alpha'\in O(\alpha)} s_{\alpha'} \qquad \forall\, \alpha\in R,
\end{gather}
see Exercise~\ref{exr:SloFormula} below. This formula implies $s_{\alpha_O}(R^\aD)=R^\aD$: Using $\aD s_\alpha \aD^{-1}=s_{\aD(\alpha)}$ we see from~(\ref{SloFormulaFolding}) that
\begin{gather*}
s_{\alpha_O}(\beta_O)=\bigg(\bigg(\prod_{\alpha'\in O(\alpha)} s_{\alpha'}\bigg)(\beta)\bigg)_O\in R^\aD,\qquad \forall\, \beta\in R.
\end{gather*}
This makes sense because $\aD$ acts cyclically. It is now straightforward (since simple roots of~$R$~($R^\vee$) give simple roots in~$R^{\aD}$ ($R^{\vee,\aD}$)) to compute the different types as claimed in the above table.
\end{proof}

\begin{rem}The name `folding' becomes evident if we depict the action of the graph automorphisms $\Cc$ on~$\Delta_h$ and `fold' $\Delta_h$ correspondingly. As an illustration we give the following example:
\begin{figure}[h]\centering
\begin{tikzpicture}
\coordinate (A) at (0,0) {};
\coordinate (B) at (1,0) {};
\coordinate (C) at (2,0) {};
\coordinate (D) at (3,0) {};
\coordinate (E) at (4,0) {};
\coordinate (F) at (5,0) {};
\coordinate (G) at (6,0) {};
\coordinate (H) at (7,0) {};
\coordinate (I) at (8,0) {};
\coordinate (J) at (9,0) {};
\coordinate (A5) at (-1.5,.25) {};
\coordinate (Cc) at (-1.5,-.25) {};
\coordinate (B4) at (10.5,0) {};
\coordinate (Bhalf) at (1.5, -0.9) {};
\coordinate (Hhalf) at (1.5,-1.45) {};

\node [fill=black, circle, inner sep=0pt, minimum size=5pt] at (A) {};
\node [fill=black, circle, inner sep=0pt, minimum size=5pt] at (B) {};
\node [fill=black, circle, inner sep=0pt, minimum size=5pt] at (C) {};
\node [fill=black, circle, inner sep=0pt, minimum size=5pt] at (D) {};
\node [fill=black, circle, inner sep=0pt, minimum size=5pt] at (E) {};
\node [fill=black, circle, inner sep=0pt, minimum size=5pt] at (H) {};
\node [fill=black, circle, inner sep=0pt, minimum size=5pt] at (I) {};
\node [fill=black, circle, inner sep=0pt, minimum size=5pt] at (J) {};
\node at (A5) {$\Delta_h=\mathrm{A}_5$};
\node at (B4) {$\Delta=\mathrm{B}_3$};
\node at (Cc) {$\Cc=\Z/2\Z$};

\draw (A) -- (B);
\draw (B) -- (C);
\draw (C) -- (D);
\draw (C) -- (E);
\draw [<->, shorten <= 0.15cm, shorten >=0.15cm] (A) to [in=90, out=90] (E);
\draw [<->, shorten <= 0.15cm, shorten >=0.15cm] (B) to [in=90, out=90] (D);
\draw (H) -- (I);
\draw[<->] (F) to (G);

\draw(8,-0.04) -- (9,-0.04);
\draw(8,0.04) -- (9,0.04);

\draw
(8.6,0) --++ (-135:.2)
(8.6,0) --++ (135:.2);

\end{tikzpicture}
\caption{Folding of $\Delta_h=\mathrm{A}_5$ to $\Delta_h^{\Cc}=\Delta=\mathrm{B}_3$.}\label{Figure1}
\end{figure}
\end{rem}

\begin{thm}[McKay correspondence] There is a one-to-one correspondence
\begin{gather}\label{eq:mckay}
\{ \Gamma\trianglelefteq \Gamma' \subset {\rm SL}(2,\C) \mbox{ finite subgroups} \} \leftrightarrow \{ \Delta \mbox{ irreducible Dynkin diagram}\} ,\\
\Gamma\trianglelefteq \Gamma' \mapsto \Delta=\Delta_h^{\Cc}, \qquad \Delta_h=\Delta_h(\Gamma),\qquad \Cc=\Gamma'/\Gamma. \nonumber
\end{gather}
\end{thm}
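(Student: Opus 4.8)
The plan is to establish the McKay correspondence~(\ref{eq:mckay}) by combining the classical du~Val part with the folding bijection of Lemma~\ref{lem:folding}. First I would recall the classical statement: finite subgroups $\Gamma\subset {\rm SL}(2,\C)$ are classified up to conjugacy (cyclic, binary dihedral, binary tetrahedral, binary octahedral, binary icosahedral), and the resolution graph of the quotient singularity $\C^2/\Gamma$ is an irreducible $\ADE$-Dynkin diagram $\Delta_h=\Delta_h(\Gamma)$. This gives the bijection $\{\Gamma\subset{\rm SL}(2,\C)\text{ finite}\}\leftrightarrow\{\Delta_h \text{ irreducible }\ADE\text{-Dynkin diagram}\}$, which is the case $\Gamma=\Gamma'$ of the claimed correspondence.

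Next I would handle the general pair $\Gamma\trianglelefteq\Gamma'$. The quotient group $\Cc:=\Gamma'/\Gamma$ acts on $\C^2/\Gamma$, hence on the exceptional fibre of the minimal resolution, and therefore by graph automorphisms on $\Delta_h$. The key point is to verify that this induced action lands in $\Autd(\Delta_h)$, i.e.\ that the symmetry coming from $\Gamma'/\Gamma$ never interchanges two neighboring vertices. Given this, Lemma~\ref{lem:folding} produces the folded diagram $\Delta=\Delta_h^{\Cc}$, and conversely assigns to any irreducible $\Delta$ its unique preimage $(\Delta_h,{\rm AS}(\Delta))$. Thus I would construct the map in~(\ref{eq:mckay}) as the composite of du~Val's bijection with the folding bijection.

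To prove bijectivity I would exhibit a two-sided inverse. Injectivity follows because $\Delta$ determines $\Delta_h$ and $\Cc$ uniquely via Lemma~\ref{lem:folding}, and $\Delta_h$ determines $\Gamma$ up to conjugacy via du~Val; one then recovers $\Gamma'$ as the preimage in ${\rm SL}(2,\C)$ of the symmetry subgroup $\Cc$ acting on $\C^2/\Gamma$. Surjectivity requires, for each irreducible $\Delta$, realizing the abstract group $\Cc={\rm AS}(\Delta)$ as an honest subgroup $\Gamma'/\Gamma$ of normalizers inside ${\rm SL}(2,\C)$. This amounts to a finite case-check over the four nontrivial foldings in table~(\ref{table:folding}): for instance $(\mathrm{A}_{2k+1},\Z/2\Z)\mapsto\mathrm{B}_{k+1}$ is realized by a cyclic $\Gamma$ inside a binary dihedral $\Gamma'$, and $(\mathrm{D}_4,S_3)\mapsto\mathrm{G}_2$ by the binary tetrahedral group sitting in the binary octahedral group with the extra triality realized geometrically.

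The main obstacle is precisely this last verification: matching the \emph{group-theoretic} datum $\Gamma'/\Gamma$ with the \emph{combinatorial} datum $\Cc\subset\Autd(\Delta_h)$, and checking that the induced action is genuinely by Dynkin graph automorphisms rather than arbitrary graph automorphisms. Concretely one must confirm that the normalizer structure of the finite subgroups of ${\rm SL}(2,\C)$ reproduces exactly the groups ${\rm AS}(\Delta)$ and the folding data of table~(\ref{table:folding}); the triality case $\mathrm{D}_4\to\mathrm{G}_2$ with $\Cc=S_3$ is the most delicate, since one needs the full $S_3$ of outer symmetries and not merely a $\Z/2\Z$. I would carry this out by a direct inspection of each of the five conjugacy types of $\Gamma$ together with their possible normal overgroups $\Gamma'$, reading off the induced diagram symmetry and comparing with Lemma~\ref{lem:folding}.
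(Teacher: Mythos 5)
Your overall strategy coincides with the paper's: du Val's bijection $\Gamma\mapsto\Delta_h(\Gamma)$ via the dual graph of the minimal resolution of $\C^2/\Gamma$, followed by lifting the action of $\Cc=\Gamma'/\Gamma$ on $\C^2/\Gamma$ to the minimal resolution (using its uniqueness) and hence to the diagram $\Delta_h(\Gamma)$, then composing with the folding bijection of Lemma~\ref{lem:folding}. The verifications you add (that the induced symmetries are genuinely Dynkin graph automorphisms, and that every pair in table~(\ref{table:folding}) is realized by some $\Gamma\trianglelefteq\Gamma'$) are exactly what the paper's sketch leaves implicit, so the architecture of your argument is sound.

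However, your case check contains a concrete error that would derail the surjectivity step as written. The binary tetrahedral group $2T$ is the subgroup with $\Delta_h(2T)=\mathrm{E}_6$, not $\mathrm{D}_4$, and it has index $2$ in the binary octahedral group $2O$; the pair $2T\trianglelefteq 2O$ therefore realizes the folding $(\mathrm{E}_6,\Z/2\Z)\mapsto\mathrm{F}_4$ and can never produce $\Cc\cong S_3$, since the quotient there has order $2$. The pair realizing $(\mathrm{D}_4,S_3)\mapsto\mathrm{G}_2$ is instead $\Gamma=Q_8$, the quaternion group (the binary dihedral group of order~$8$, whose diagram is $\mathrm{D}_4$), inside $\Gamma'=2O$: one has $Q_8$ characteristic in $2T$ and $2T\trianglelefteq 2O$, hence $Q_8\trianglelefteq 2O$ with $2O/Q_8\cong S_3$. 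Note also that the intermediate pair $Q_8\trianglelefteq 2T$, with quotient $\Z/3\Z$, folds $\mathrm{D}_4$ to $\mathrm{G}_2$ as well; this shows the naive assignment $(\Gamma,\Gamma')\mapsto\Delta$ is not injective on the nose, which is why the inverse of the correspondence must single out the full symmetry group ${\rm AS}(\Delta)$ as in Lemma~\ref{lem:folding} -- a point your injectivity argument, which recovers $\Gamma'$ from $\Cc$, should state explicitly. With these corrections your plan goes through and agrees with the paper's proof.
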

\begin{proof}[Sketch of proof] Let $\Gamma=\Gamma'$ so that $\Cc=1$. The quotient orbifold $\C^2/\Gamma$ has a unique singularity at $[0]\in \C^2/\Gamma$. It admits the minimal resolution
\begin{gather*}
\pi\colon \ \widehat{\C^2/\Gamma}\to \C^2/\Gamma.
\end{gather*}
Its resolution graph , i.e., the intersection graph of the irreducible components of its exceptional divisor, is dual to an irreducible $\ADE$-Dynkin diagram $\Delta_h(\Gamma)$. It turns out that the resolution graph uniquely determines $\Gamma$. Hence the mapping $\Gamma \mapsto \Delta_h(\Gamma)$ is a bijection onto irreducible $\ADE$-Dynkin diagrams.

Now let $\Gamma \trianglelefteq \Gamma'$ be two finite and distinct subgroups of ${\rm SL}(2,\C)$. Then the group $\Cc:=\Gamma'/\Gamma$ acts on $\C^2/\Gamma$. By the uniqueness of the minimal resolution, $\Cc$ also acts on $\widehat{\C^2/\Gamma}$, in particular on the dual $\Delta_h(\Gamma)$ of its resolution graph. In that way we obtain the full bijection~(\ref{eq:mckay}).
\end{proof}

\begin{dfn}Let $\Delta=\Delta_h^\Cc$ be an irreducible Dynkin diagram where $(\Delta_h,\Cc)$ is as in~(\ref{table:folding}). Further let $\Gamma\trianglelefteq \Gamma'$ correspond to $\Delta$ under the McKay correspondence~(\ref{eq:mckay}). A~$\Delta$-singularity is a~(germ of a) surface singularity~$(Y,0)$ together with the action of a finite subgroup $H\subset \Aut(Y,0)$ which is isomorphic to $(\C^2/\Gamma, [0])$ together with the action of $\Cc\cong \Gamma'/\Gamma$.
\end{dfn}

Any $\Delta$-singularity $(Y,H)$ admits a semi-universal deformation. This is a semi-universal deformation $\sigma\colon \mathcal{Y}\to (B,0)$ of $Y$ such that $H$ acts fiber-preserving on $\mathcal{Y}$ inducing the $H$-action on $\sigma^{-1}(0)\cong Y$.

\begin{ex}\label{ex:B2} Let $\Delta=\mathrm{B}_2=\Delta_h^{\Cc}$ for $(\Delta_h,\Cc)=(\mathrm{A}_3,\Z/2\Z)$. The corresponding pair $\Gamma\trianglelefteq \Gamma'$ is given by
\begin{gather*}
\Gamma= \left\{ \begin{pmatrix}
\exp\big(\tfrac{\pi {\rm i}}{2}k\big) & 0 \\
0 & \exp\big({-}\tfrac{\pi {\rm i}}{2}k\big)
 \end{pmatrix}, \ k=0,1,2,3
\right\}
\trianglelefteq
\Gamma'= \left\langle \Gamma, \begin{pmatrix}
0 & 1 \\ -1 & 0
\end{pmatrix}\right\rangle.
\end{gather*}
Choosing appropriate generators of $\C[x,y]^\Gamma$, the quotient $\C^2/\Gamma$ is expressed as the hypersurface singularity $u^4-vw=0$ in $\C^3$. The action by $\Cc\cong \Gamma'/\Gamma=\Z/2\Z$ is $-1\cdot(u,v,w)=(-u, w,v)$. A~semi-universal deformation of this $\Delta$-singularity is given by
\begin{gather*}
\mathcal{Y}=\big\{ (u,v,w,(a_1,a_2))\in \C^3\times \C^2\,|\,u^4-vw+a_1 x^2+a_2=0 \big\} \to \C^2
\end{gather*}
with $\Cc$-action $(u,v,w,(a_1,a_2))\mapsto (-u,w,v,(a_1,a_2))$.
\end{ex}

\subsection{Slodowy slices}
Even though the quotient singularities $\C^2/\Gamma$ with the group action of $\Cc\cong \Gamma'/\Gamma$ are classified by irreducible Dynkin diagrams $\Delta=\Delta_h(\Gamma)^{\Cc}$, $\Cc={\rm AS}(\Delta)$, it is a priori unclear how they are related to the simple complex Lie algebra $\gfr=\gfr(\Delta)$ corresponding to $\Delta$. This relation was elucidated by Brieskorn~\cite{Bri} and Slodowy~\cite{Slo} after a suggestion by Grothendieck.

Let $x\in \gfr$ be a nilpotent element which is subregular, i.e.,
\begin{gather*}
\dim \ker \operatorname{ad}(x)=r+2, \qquad r=\operatorname{rk}(\gfr).
\end{gather*}
By the Jacobson--Morozov theorem \cite{CollingwoodMcGovern}, there exists a monomorphism $\rho\colon \mathfrak{sl}(2,\C)\hookrightarrow \gfr$ and $y,h\in \gfr$ such that
\begin{gather*}
\rho\begin{pmatrix} 0 & 1 \\ 0 & 0 \end{pmatrix}=x, \qquad \rho\begin{pmatrix} 0 & 0 \\ 1 & 0 \end{pmatrix}=y,\qquad \rho\begin{pmatrix} 1 & 0 \\ 0 & -1 \end{pmatrix}=h.
\end{gather*}
The triple $(x,y,h)$ is called an $\mathfrak{sl}_2$-triple for $x$. The Slodowy slice through $x$ is defined by
\begin{gather*}
S:=x+\ker \operatorname{ad}(y).
\end{gather*}
It carries a $\C^*\times\Cc$-action as we next explain. For the $\C^*$-action observe that $\rho\colon \mathfrak{sl}(2,\C)\hookrightarrow \gfr$ exponentiates to yield the group homomorphism $\tilde{\rho}\colon {\rm SL}(2,\C)\to G=G_{\rm ad}$. Letting $G$ act on $\gfr$ by the adjoint action, we define
\begin{gather*}
\C^*\times S\to S,\qquad (\zeta,v)\mapsto \zeta^2\tilde{\rho}\begin{pmatrix}
\zeta^{-1} & 0 \\
0 & -\zeta^{-1}
\end{pmatrix}\cdot v.
\end{gather*}
This is a well-defined action, i.e., it preserves $S$. For the $\Cc$-action define the group
\begin{gather*}
C(x,h):=\{ g\in G_{\rm ad} \,|\,g\cdot x=x, \, g\cdot h=h\},
\end{gather*}
which acts on $S\subset \gfr$ via the adjoint action. Let $C(x,h)^\circ\subset C(x,h)$ be the connected component of the identity. Then $C(x,h)/C(x,h)^\circ\cong \Cc$ and the exact sequence
\begin{equation*}
\begin{tikzcd}
1 \ar[r] & C(x,h)^\circ \ar[r] & C(x,h) \ar[r] & \Cc \ar[r] & 1
\end{tikzcd}
\end{equation*}
splits \cite[Section~7.5]{Slo}. In that way, $\Cc$ acts on $S$ as well. Since the $\C^*$-action commutes with the $\Cc$-action, we obtain a $\C^*\times \Cc$-action on $S$. It interacts with the restriction
\begin{gather*}
\sigma=\chi_{|S}\colon \ S \to \tfr/W
\end{gather*}
of the adjoint quotient $\chi\colon \gfr\to \tfr/W$ as follows. If we let $\C^*$ act on $\tfr/W$ by
\begin{gather*}
\zeta\cdot [t]=\big[\zeta^2 t\big],
\end{gather*}
i.e., by twice the standard weights, and $\Cc$ act trivially, then $\sigma\colon S\to \tfr/W$ is $\C^*\times \Cc$-equivariant. The importance of Slodowy slices for our purposes is summarized in the following.
\begin{thm}[Slodowy] Let $\Delta=\Delta_h^{\Cc}$ be an irreducible Dynkin diagram and $\gfr=\gfr(\Delta)$ the corresponding simple complex Lie algebra. If $x\in \gfr(\Delta)$ is a subregular nilpotent element and $S=x+\ker \operatorname{ad}(y)$ a Slodowy slice through $x$, then the following holds:
\begin{enumerate}[label=$\alph*)$]\itemsep=0pt
\item $(S_{[0]}, x)$ together with the induced $\Cc$-action is a $\Delta$-singularity.
\item The restriction $\sigma=\chi_{|S}\colon S\to \tfr/W$ of the adjoint quotient together with the $\Cc$-action is a~semi-universal deformation of the $\Delta$-singularity $(S_{[0]},x)$.
\end{enumerate}
\end{thm}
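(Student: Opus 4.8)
The plan is to reduce the whole statement to the simply-laced case, where it is the theorem of Brieskorn and Grothendieck, and to organize both the identification of the central fibre and the versality step around the Grothendieck--Springer simultaneous resolution, carrying the $\Cc$-action along throughout. First I would settle the dimension bookkeeping and flatness. Subregularity gives $\dim S=\dim\ker\operatorname{ad}(y)=\dim\ker\operatorname{ad}(x)=r+2$, while $\chi\colon\gfr\to\tfr/W\cong\C^r$ is flat with equidimensional fibres. A dimension count shows that $S$ meets every fibre of $\chi$ in a two-dimensional subvariety; since $S$ is smooth (hence Cohen--Macaulay) and $\tfr/W$ is smooth, miracle flatness yields that $\sigma=\chi_{|S}\colon S\to\tfr/W$ is flat with two-dimensional fibres. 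In particular the central fibre $S_{[0]}=\sigma^{-1}([0])=S\cap\chi^{-1}([0])$ is a surface whose smooth locus lies in the subregular nilpotent orbit, so that $x$ is its only singular point.

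For part~a) the geometric input is the Grothendieck--Springer resolution $\tilde\gfr=\{(v,\mathfrak{b})\,|\,v\in\mathfrak{b}\}\to\gfr$ together with $\tilde\chi\colon\tilde\gfr\to\tfr$, which sits over $\chi$ and the quotient $\tfr\to\tfr/W$. Restricting to $S$ produces $\tilde\sigma\colon\tilde S\to\tfr$ resolving $\sigma$ after the $W$-cover, and the exceptional fibre of $\tilde S\to S$ over $x$ is precisely the Springer fibre over the subregular element. The classical description of the subregular Springer fibre identifies this configuration, as a dual graph, with $\Delta_h$, so that $S_{[0]}$ is the Kleinian singularity $\C^2/\Gamma$ of type $\Delta_h$. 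The component group $C(x,h)/C(x,h)^\circ\cong\Cc$ then acts on this fibre by permuting its components, realizing the folding automorphisms on $\Delta_h$; comparing with the defining $\Gamma'/\Gamma$-action shows that $(S_{[0]},x)$ with its $\Cc$-action is exactly a $\Delta$-singularity.

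For part~b) the same simultaneous resolution is the main tool. By the Brieskorn--Tyurina characterization, the semi-universal deformation of a Du~Val singularity is pinned down by the property that it admits a simultaneous minimal resolution after base change along $\tfr\to\tfr/W$; equivalently, its Kodaira--Spencer map $T_{[0]}(\tfr/W)\to T^1_{(S_{[0]},x)}$ is an isomorphism. Both spaces have dimension $r$ (the Milnor number of a simple singularity equals the rank), so versality reduces to a single nondegeneracy statement, and the strictly positive weights of the $\C^*$-action on $\tfr/W$ promote first-order versality to honest semi-universality by the theory of deformations with good $\C^*$-action. In the non-simply-laced case I would run the entire argument $\Cc$-equivariantly: the space of $\Cc$-invariant first-order deformations has dimension equal to the number of $\Cc$-orbits on the nodes of $\Delta_h$, which is $\operatorname{rk}\gfr(\Delta)=r=\dim\tfr/W$, so the $\Cc$-equivariant Kodaira--Spencer map is again an isomorphism.

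The hard part will be two intertwined points. First, the Springer-theoretic identification of the subregular exceptional fibre with the diagram $\Delta_h$ is the geometric heart of Brieskorn's theorem and is genuinely nontrivial. Second, one must verify that the component-group action matches the folding data $\Cc\subset\Autd(\Delta_h)$ compatibly with the simultaneous resolution, so that the $\Cc$-equivariant deformation theory applies and the versality dimension count closes up. The positivity of the $\C^*$-weights guarantees that a first-order isomorphism suffices, but establishing the \emph{equivariant} versality genuinely requires Slodowy's refinement of Brieskorn--Tyurina rather than following as a formal consequence.
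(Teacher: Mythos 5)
The paper itself contains no proof of this theorem: it is quoted as a known result of Slodowy \cite{Slo} (building on Brieskorn \cite{Bri}), so there is no internal argument to compare against. Measured against that literature, your outline does reconstruct the genuine strategy: flatness of $\sigma$ by the smooth-base/smooth-total-space dimension argument, identification of the central fibre through the Grothendieck--Springer simultaneous resolution and Brieskorn's theorem that the subregular Springer fibre is a Dynkin curve with dual graph $\Delta_h$, the component group $C(x,h)/C(x,h)^\circ\cong\Cc$ inducing the folding automorphisms, and a Kodaira--Spencer plus $\C^*$-weight argument for semi-universality run $\Cc$-equivariantly. Your identification of the two hard points (the Springer-fibre theorem, and the compatibility of the component-group action with the folding data) is also where Slodowy's actual work lies.

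Two corrections are needed in part b), both substantive. First, your claim that ``both spaces have dimension $r$ (the Milnor number of a simple singularity equals the rank)'' fails exactly in the cases the theorem is designed for: when $\Delta\neq\Delta_h$ the central fibre is a singularity of type $\Delta_h$, so $\dim T^1_{(S_{[0]},x)}=|\Delta_h|>r=\dim\tfr/W$, the plain Kodaira--Spencer map cannot be bijective, and $\sigma$ is \emph{not} a semi-universal deformation of the underlying surface singularity. Semi-universality holds only in the category of $\Cc$-equivariant deformations, which is how the paper's definition of deformations of a $\Delta$-singularity must be read; compare Example~\ref{ex:B2}, where the underlying $\mathrm{A}_3$-singularity has a three-dimensional versal base while the equivariant family has base $\C^2$. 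So your equivariant paragraph is not a refinement of an already-valid statement --- it \emph{is} the statement, and your dimension count $\dim\bigl(T^1\bigr)^{\Cc}=\#\{\Cc\text{-orbits on nodes of }\Delta_h\}=r$, while true, is not a formal count: it rests on Slodowy's identification $(\tfr_h/W_h)^{\Cc}\cong\tfr/W$ of the invariant part of the versal base with the folded adjoint-quotient base, which must be proved. Second, admitting a simultaneous minimal resolution after base change along $\tfr\to\tfr/W$ does \emph{not} pin down the semi-universal deformation --- a trivial product family also admits one --- so this property is not ``equivalent'' to the Kodaira--Spencer map being an isomorphism. The operative criterion is surjectivity of the Kodaira--Spencer map (versality, by the standard completeness theorem for isolated hypersurface singularities) together with injectivity, with the contracting $\C^*$-action used to pass from the germ at $x$ to the global affine slice.
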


\subsection{Non-compact Calabi--Yau integrable systems}
As before we fix an irreducible Dynkin diagram $\Delta=\Delta_h^{\Cc}$ and $S\subset \gfr=\gfr(\Delta)$ a Slodowy slice. Moreover, we fix a compact Riemann surface $C$ of genus $\geq 2$ together with a spin bundle $L$, i.e., $L^2=K_C$. This data defines a family $\mathcal{X}\to \Bb=H^0(C, (K_C\otimes \tfr)/W)$ of threefolds via the diagram\footnote{Here we abuse notation and identify a line bundle with the $\C^*$-bundle obtained by removing the zero section.}
\begin{equation}\label{eq:constr3folds}
\begin{tikzcd}
\mathcal{X} \ar[r] \ar[d] \ar[dd, bend right, in=270, out=270,"\bm{\pi}"''] & L\times_{\C^*} S \ar[d, "\bm{\sigma}"] \\
C\times \Bb \ar[r, "{\rm ev}"] \ar[d] & L\times_{\C^*}\tfr/W \\
\Bb,
\end{tikzcd}
\end{equation}
where the square is the fiber product. Here we let $\C^*$ act by twice its weight so that $L\times_{\C^*} \tfr/W\cong (K_C\otimes \tfr)/W$. Since the $\C^*$-action on~$S$ commutes with the $\Cc$-action, $\pib\colon \X\to \Bb$ carries a fiber-preserving $\Cc$-action.

\begin{prop}[\cite{Beck2}]\label{prop:cy3s} Let $L\to C$ be a spin bundle and let $\pib\colon \X\to \Bb$ be the family of threefolds with $\Cc$-action constructed in~\eqref{eq:constr3folds}.
Then the following hold:
\begin{enumerate}[label=$\roman*)$]\itemsep=0pt
\item Every $X_b$, $b\in \Bb$, is quasi-projective and Gorenstein so that the canonical sheaf $\omega_{X_b}$ is a~line bundle $K_{X_b}$.
\item For every $b\in \Bb$, there is a nowhere-vanishing section $s_b\in H^0(X_b,K_{X_b})$ which is $\Cc$-invariant. In particular, every $X_b$ is a quasi-projective $($possibly singular$)$ Calabi--Yau threefold.
\item If $b\in \Bbo$, then $X_b$ is smooth.
\end{enumerate}
\end{prop}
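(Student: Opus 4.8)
The plan is to analyze the three claims by reducing them to local statements about the Slodowy slice $S$ and its semi-universal deformation $\sigma\colon S\to \tfr/W$, then globalizing via the fiber-product construction in~\eqref{eq:constr3folds}. The central observation is that $X_b$ is the pullback of $L\times_{\C^*}S\to L\times_{\C^*}\tfr/W$ along the section $\mathrm{ev}(\cdot,b)\colon C\to L\times_{\C^*}\tfr/W\cong (K_C\otimes \tfr)/W$; thus $X_b$ fibers over $C$ with fibers the slices $\sigma^{-1}(b(\zeta))\subset S$. Since $S$ is an affine slice in $\gfr$ and $\sigma$ is a morphism of affine varieties, each $X_b$ is quasi-projective, which together with a local computation of the canonical sheaf will give statement~$i)$.

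For statement~$i)$, I would first recall that by Slodowy's theorem the total space $S$ is smooth (it is an affine subspace of $\gfr$) and $\sigma\colon S\to \tfr/W$ is a flat map whose central fiber is the $\Delta$-singularity $(S_{[0]},x)$, a Gorenstein surface singularity. Gorenstein-ness of the fibers $X_b$ should follow from the Gorenstein property of the $\Delta$-singularities (rational double points are Gorenstein) propagated through the flat family and the twist by $L$; the key point is that the relative dualizing sheaf $\omega_{\bm\sigma}$ of $\bm\sigma\colon L\times_{\C^*}S\to L\times_{\C^*}\tfr/W$ is a line bundle, whence its restriction to each $X_b$ is the line bundle $K_{X_b}$. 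Here the $\C^*$-equivariance of $\sigma$ under the weight-$2$ action is essential for the twist by the spin bundle $L$ (with $L^2=K_C$) to produce the correct bundle.

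The heart of the argument, and what I expect to be the main obstacle, is statement~$ii)$: producing a nowhere-vanishing, $\Cc$-invariant holomorphic volume form $s_b\in H^0(X_b,K_{X_b})$. The strategy is to first construct a canonical trivializing section of the relative dualizing sheaf $\omega_{\bm\sigma}$ of the two-dimensional fibration $\bm\sigma$, which amounts to a $\C^*$-equivariant relative volume form on the Slodowy slice family $\sigma\colon S\to \tfr/W$; such a form exists because the fibers are Gorenstein with trivializable dualizing sheaves, but one must check that it is genuinely nowhere-vanishing (not merely generically nonzero) and that its $\C^*$-weight is compatible with the weight of the spin bundle $L$ so that the tensor product over $C$ yields a global section of $K_{X_b}$ rather than a section of a nontrivially-twisted bundle. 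The $\Cc$-invariance is then reduced to the fact that the $\Cc$-action on $S$ arises from the adjoint action of $C(x,h)/C(x,h)^\circ$, which preserves the chosen $\mathfrak{sl}_2$-triple and hence the equivariant volume form; one verifies that $\Cc$ fixes this distinguished section. The delicate bookkeeping of $\C^*$-weights — matching the weight-$2$ action on $S$ against $L^2=K_C$ — is precisely where the hypothesis that $L$ is a spin bundle is used, and getting the weights to balance is the crux.

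Finally, statement~$iii)$ is the most direct: for $b\in\Bbo$ the section $b$ is transversal to $\mathrm{disc}(\qf)$, the discriminant of $\qf\colon K_C\otimes\tfr\to (K_C\otimes\tfr)/W$. Since $\sigma\colon S\to\tfr/W$ is a semi-universal deformation of the surface singularity, its fibers acquire singularities exactly over the discriminant, and transversality of $b$ guarantees that the pulled-back family has only smooth fibers; combined with the smoothness of the total space $S$ and of $C$, a standard transversality argument shows $X_b$ is smooth. I would phrase this as observing that the singular locus of $\bm\sigma$ maps into the discriminant divisor, so that transversality of the section $b$ forces $X_b$ to avoid the critical locus, yielding smoothness of each $X_b$ for $b\in\Bbo$.
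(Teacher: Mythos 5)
The paper gives you nothing to compare against here: the proposition is imported from \cite{Beck2}, and the text explicitly declines to comment on the proof, giving an example instead. Measured against what a complete argument needs (and against the cited source), your outline has the right architecture, and part~i) is essentially fine --- it can even be streamlined: $X_b=\bm{\sigma}^{-1}(b(C))$ is a local complete intersection inside the smooth quasi-projective variety $L\times_{\C^*}S$ (pull back the $r$ local equations of the curve $b(C)$ under the flat map $\bm{\sigma}$), hence quasi-projective and Gorenstein. But your argument for part~iii) fails. Transversality of $b$ to $\mathrm{disc}(\qf)$ does \emph{not} force $X_b$ to avoid the critical locus of $\bm{\sigma}$, nor does it make the fibers of $X_b\to C$ smooth. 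Already for $\Delta=\mathrm{A}_1$, where $\Bb=H^0\big(C,K_C^{\otimes 2}\big)$ and the discriminant is the zero section, $b\in\Bbo$ means the quadratic differential $b$ has simple zeros --- but it has $4g-4>0$ of them, and over each zero the surface fiber of $X_b\to C$ is singular. So $X_b$ necessarily passes through critical points of $\bm{\sigma}$; what must be proved is that the total space is smooth there anyway. This follows from versality: at a critical point $p$ lying over a smooth point of the discriminant (transversality ensures $b(C)$ meets only such points), the image of ${\rm d}\bm{\sigma}_p$ is exactly the tangent space of the discriminant, so transversality gives $\operatorname{im}({\rm d}\bm{\sigma}_p)+T_{\bm{\sigma}(p)}b(C)=T_{\bm{\sigma}(p)}\big(L\times_{\C^*}\tfr/W\big)$, and the preimage of $b(C)$ is smooth of dimension $3$ at $p$. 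Note that if your claim were literally true, the threefolds would contain no vanishing cycles of the surface singularities, and the comparison with the Hitchin system (Theorem~\ref{thm:nccyhit}) would be vacuous.

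For part~ii) you correctly isolate the crux --- an equivariant trivialization of $\omega_\sigma$ whose $\C^*$-weight matches $L^2=K_C$ --- but you never compute the weight, and that computation \emph{is} the content of the claim. It is short: since $S$ is an affine space and $\tfr/W\cong\C^r$, adjunction gives $\omega_\sigma\cong K_S\otimes\sigma^*K_{\tfr/W}^{-1}$, equivariantly trivialized by the ratio of translation-invariant volume forms; this also settles your worry about nowhere-vanishing, since both factors are constant and nonzero, so there is nothing to rule out. The Kazhdan action $\zeta\cdot(x+v)=\zeta^2\operatorname{Ad}\big(\tilde{\rho}(\operatorname{diag}(\zeta^{-1},\zeta))\big)(x+v)$ acts on $\ker\operatorname{ad}(y)$ with weights $m_j+2$, where $m_1,\dots,m_{r+2}$ are the highest weights of the $r+2$ irreducible $\mathfrak{sl}_2$-summands of $\gfr$; since $\sum_j(m_j+1)=\dim\gfr$, these sum to $\dim\gfr+r+2$. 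The weights on $\tfr/W$ are $2d_1,\dots,2d_r$, summing to $2\sum_i d_i=\dim\gfr+r$. Hence the relative volume form has weight exactly~$2$, so after the associated-bundle construction $\omega_{X_b/C}\cong p^*L^{-2}=p^*K_C^{-1}$, and $K_{X_b}\cong\omega_{X_b/C}\otimes p^*K_C\cong\Oo_{X_b}$, the trivializing section $s_b$ being the twisted relative volume form. Also, for $\Cc$-invariance it is not enough to say that $\Cc$ ``preserves the chosen $\mathfrak{sl}_2$-triple'': a priori the finite group $C(x,h)/C(x,h)^\circ$ scales the volume form on $\ker\operatorname{ad}(y)$ by a determinant character, and one must check that this character is trivial. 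Without the weight count, ``the weights balance'' is an assertion, not a proof.
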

A closely related construction, without the $\Cc$-action, appeared in \cite{DDP} inspired by~\cite{Sz1}. Instead of commenting on the proof of this proposition, we give an example.
\begin{ex}Let $\Delta=\mathrm{B}_2$ so tha $\Bb=H^0\big(\cu,K_C^{\otimes 2}\big) \oplus H^0\big(\cu, K_C^{\otimes 4}\big)$. Using the explicit description in Example~\ref{ex:B2}, the corresponding family $\X\to \Bb$ is written as
\begin{gather*}
\mathcal{X}=\big\{ (\alpha,\beta,\gamma,(b_1,b_2))\,|\,\alpha^{\otimes 4}-\beta\otimes\gamma+b_1\otimes \alpha^{\otimes 2}+b_2=0 \in \operatorname{tot}\big(K_C^4\big) \big\}
\end{gather*}
in $\operatorname{tot}\big(K_C\oplus K_C^2 \oplus K_C^2\big)\times \Bb$ with $\Cc$-action $(-1)\cdot (\alpha,\beta,\gamma,(b_1,b_2))=(-\alpha,\gamma,\beta,(b_1,b_2))$.
\end{ex}
For every $b\in \Bbo$, we define the $\Cc$-invariant intermediate Jacobian
\begin{gather*}
J^2_{\Cc}(X_b)=H^3(X_b,\C)^{\Cc}/\big(F^2H^3(X_b,\C)^{\Cc}+H^3(X_b,\Z)^\Cc\big).
\end{gather*}
Observe that these do not coincide with the $\Cc$-fixed points in $J^2(X_b)$.
\begin{thm}[\cite{Beck}]\label{thm:ncIS} Let $\pi^\circ\colon \X^\circ\to \Bbo$ be the smooth family of quasi-projective Calabi--Yau threefolds obtained in Proposition~{\rm \ref{prop:cy3s}}. Then
\begin{gather*}
\mathcal{J}^2_{\Cc}(\X^\circ/\Bbo)\to \Bbo
\end{gather*}
carries the structure of an algebraic integrable system.
\end{thm}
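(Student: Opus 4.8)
The plan is to realize $\mathcal{J}^2_{\Cc}(\X^\circ/\Bbo)$ as the complex-torus fibration $\mathcal{J}(\VH)$ attached to a polarized $\Z$-VHS $\VH$ of weight $1$ and index $0$ over $\Bbo$, and then to exhibit an abstract Seiberg--Witten differential, so that Proposition~\ref{thm:abSW} equips the total space with a holomorphic symplectic form turning the zero section into a Lagrangian. Since the fibers will turn out to be abelian varieties, the resulting complex integrable system has index $0$ and is therefore algebraic in the sense of Definition~\ref{dfn:AIS}; properness of $\mathcal{J}^2_{\Cc}(\X^\circ/\Bbo)\to\Bbo$ is automatic as its fibers are compact.

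\emph{The variation of Hodge structure.} First I would assemble the $\Cc$-invariant cohomologies into a VHS over $\Bbo$. The lattices $H^3(X_b,\Z)^{\Cc}$ form a local system $\VH_{\Z}$, the $\Cc$-invariant Hodge filtration $F^2H^3(X_b,\C)^{\Cc}$ defines a holomorphic subbundle which, after the weight-$3$-to-weight-$1$ relabelling of Example~\ref{ex:j2} (see~\eqref{eq:F1HJ}), plays the role of $F^1\VH_{\Oo}$, and the intersection pairing $Q(\alpha,\beta)=\int_{X_b}\alpha\wedge\beta$ supplies the skew form. Because $X_b$ is non-compact, this pairing must be set up via compactly supported cohomology, so the honest object is the pure weight-$3$ Hodge structure carried by $\operatorname{im}\big(H^3_c(X_b,\C)\to H^3(X_b,\C)\big)^{\Cc}$, on which the cup product is non-degenerate and defined over $\Z$. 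Griffiths transversality and local constancy then make $\VH$ a polarized $\Z$-VHS of weight $1$ with $\mathcal{J}(\VH)=\mathcal{J}^2_{\Cc}(\X^\circ/\Bbo)$, exactly as in Section~\ref{ss:VHSsmooth}.

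\emph{Index zero --- the hard part.} The crucial and most delicate point is that this polarization has index $0$, in contrast to the compact Calabi--Yau integrable system of Theorem~\ref{thm:DM}, which has index $1$. By Lemma~\ref{lem:HQ} this amounts to showing that the $\Cc$-invariant part of the Hodge structure is concentrated in bidegrees $(2,1)$ and $(1,2)$, i.e.\ that $H^{3,0}(X_b)^{\Cc}=0=H^{0,3}(X_b)^{\Cc}$. Although the holomorphic volume form $s_b$ of Proposition~\ref{prop:cy3s} is $\Cc$-invariant, the non-compactness of $X_b$ prevents it from contributing a class to the relevant pure weight-$3$ cohomology. To make this precise I would analyze $H^3(X_b)$ through the fibration of $X_b$ over $C$ with ALE-surface fibers, the minimal resolutions of the $\Delta$-singularity: a Leray/Gysin computation identifies the relevant cohomology with the $W$- and $\Cc$-invariants of $H^1$ of $C$ (equivalently, of the cameral curve $\tilde{C}_b$) with coefficients in the local system spanned by the exceptional curves, i.e.\ the root lattice. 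This is a Tate-twisted weight-$1$ Hodge structure of pure curve type with no $(3,0)$ summand, hence of index $0$; compatibly with Theorem~\ref{thm:isogenoushit}, the fibers are the abelian varieties $\big(\mathrm{Jac}(\tilde{C}_b)\otimes\bLambda_G\big)^W$ up to isogeny. This cohomological computation is where the real content lies.

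\emph{Seiberg--Witten differential and conclusion.} It remains to produce an abstract Seiberg--Witten differential and invoke Proposition~\ref{thm:abSW}. The natural candidate is the tautological $1$-form on $\operatorname{tot}(K_C\otimes\tfr)$ restricted fiberwise, which matches the Hitchin differential $\blambda_{\rm SW}$ of Section~\ref{ss:hitabsw}; under the identification of the previous step it gives a section $\blambda\in H^0(\Bbo,F^1\VH_{\Oo})$. One then verifies that $v\mapsto\nabla_v\blambda$ is an isomorphism $T\Bbo\to F^1\VH_{\Oo}$, the analogue of the completeness/Kodaira--Spencer computation in Theorem~\ref{thm:DM}, now read off from the semi-universal deformation property of the Slodowy slice: the deformation directions of the $\Delta$-singularity, spread over $C$ and twisted by $L$, are precisely the directions of $\Bb$. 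Proposition~\ref{thm:abSW} then provides the holomorphic symplectic form $\omega_{\blambda}$, and since the index is $0$ the fibers are polarized abelian varieties, yielding the relative polarization required by Definition~\ref{dfn:AIS}. Hence $\mathcal{J}^2_{\Cc}(\X^\circ/\Bbo)\to\Bbo$ is a smooth algebraic integrable system. The main obstacle throughout is the index-$0$ computation, together with the care needed to define the integral polarization $Q$ on the non-compact threefolds.
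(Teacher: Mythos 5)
Your proposal takes essentially the same route as the paper's proof: establish that the a priori mixed Hodge structures on $H^3(X_b,\Z)^{\Cc}$ are in fact pure (of weight~$1$ after relabelling) and of index~$0$, so that they assemble into a polarizable $\Z$-VHS over $\Bbo$, and then construct a $\Cc$-invariant abstract Seiberg--Witten differential and invoke Proposition~\ref{thm:abSW}. Your elaborations of the two steps (the Leray/Gysin computation over $C$ reducing to curve-type cohomology with root-lattice coefficients, the use of compactly supported cohomology for the integral polarization, and the tautological form on $\operatorname{tot}(K_C\otimes\tfr)$ as the differential) are consistent with how the cited work carries them out.
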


The previous theorem gives an example of a non-compact Calabi--Yau integrable system (also see~\cite{DDP}). Here `non-compact' refers to the fact that the underlying Calabi--Yau threefolds are non-compact. For that reason we cannot apply the result of Donagi--Markman (Theorem~\ref{thm:DM}) because it requires deformation theory of compact Calabi--Yau threefolds. Another important difference is that no base change is required.
\begin{proof}[Sketch of proof] Since $X_b$, $b\in \Bbo$, is only quasi-projective, $H^3(X_b,\Z)$ a priori carries a mixed Hodge structure~\cite{DeligneII}. In the first step, we prove that it is in fact pure of weight~$1$ and index~$0$. Hence it induces a polarizable $\Z$-VHS $\VH^{\rm CY}$ of weight~$1$. In the second step, we construct an abstract Seiberg--Witten differential $\blambda_{\rm CY}\colon \Bbo\to \VH^{\rm CY}_{\Oo}$ which is $\Cc$-invariant.
This shows that $\mathcal{J}^2_{\Cc}(\X^\circ)\to \Bbo$ is an algebraic integrable system.
\end{proof}

\subsection[Isomorphism with $G$-Hitchin systems]{Isomorphism with $\boldsymbol{G}$-Hitchin systems}
After all this preparation, we finally come to the precise relation between Calabi--Yau integrable and Hitchin systems.
\begin{thm}[\cite{Beck}]\label{thm:nccyhit} Let $\Delta=\Delta_{h}^{\Cc}$ be an irreducible Dynkin diagram and $G=G_{\rm ad}$ the corresponding simple adjoint complex Lie group with Hitchin base $\Bb=\Bb(C,G_{\rm ad})$. Further let $\mathcal{J}^2_{\Cc}(\X^\circ/\Bbo)\to \Bbo$ be a non-compact Calabi--Yau integrable system constructed in Theorem~{\rm \ref{thm:ncIS}}. Then $\mathcal{J}^2_{\Cc}(\X^\circ/\Bbo)\cong \Hig^\circ(C,G)$ as algebraic integrable systems over $\Bbo$.
\end{thm}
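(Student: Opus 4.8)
\section*{Proof proposal}

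The plan is to exploit the common origin of the two sides: each is an instance of the abstract Seiberg--Witten machine of Proposition~\ref{thm:abSW}. Indeed, $\Hig^\circ(C,G)$ is realized as $\big(\mathcal{J}(\VH^H),\omega_{\blambda_{\rm SW}}\big)$ for the cameral $\Z$-VHS $\VH^H$ and its differential $\blambda_{\rm SW}$, while $\mathcal{J}^2_{\Cc}(\Xo/\Bbo)$ is $\big(\mathcal{J}(\VH^{\rm CY}),\omega_{\blambda_{\rm CY}}\big)$ for the polarizable $\Z$-VHS $\VH^{\rm CY}$ and differential $\blambda_{\rm CY}$ produced in Theorem~\ref{thm:ncIS}. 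Since Proposition~\ref{thm:abSW} shows that the holomorphic symplectic form depends only on the pair (VHS, abstract Seiberg--Witten differential) up to symplectomorphism --- and is insensitive to the polarization and even to isogeny --- it suffices to produce (i) an isomorphism $\VH^{\rm CY}\cong \VH^H$ of polarizable $\Z$-VHS over $\Bbo$, and (ii) a check that this isomorphism sends $\blambda_{\rm CY}$ to $\blambda_{\rm SW}$. The $\Bbo$-equivariance is automatic, as both constructions are relative over $\Bbo$.

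The heart of step~(i) is a cohomological computation for the surface fibration $f_b\colon X_b\to C$ induced by $\pib$, cf.~\eqref{eq:constr3folds}. Over $\zeta\in C$ the fibre is the Slodowy-slice surface $\sigma^{-1}(b(\zeta))$, which for $b\in\Bbo$ is a smoothing (Milnor fibre) of the $\Delta$-singularity. By Brieskorn--Slodowy theory \cite{Bri,Slo} its reduced cohomology is concentrated in degree~$2$, is of pure Hodge type $(1,1)$ (the vanishing cycles are algebraic $(-2)$-spheres), and carries the root-lattice structure: as $b(\zeta)$ crosses the discriminant, Picard--Lefschetz monodromy realizes the reflections of $W$, so that $R^2 f_{b*}\underline{\Z}$ is exactly the local system on $C$ cut out by the cameral cover $\tilde{C}_b\to C$ with coefficients in $\bLambda_G$. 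The Leray spectral sequence for $f_b$ degenerates and, since $R^1 f_{b*}\underline{\Z}=0$ and $C$ is a curve, feeds the weight-$1$ piece relevant to the intermediate Jacobian through the single term $H^1\big(C,R^2 f_{b*}\underline{\Z}\big)$; this underlies the purity and index-$0$ assertions already used in Theorem~\ref{thm:ncIS}. Passing to $\Cc$-invariants implements the folding of $\Delta_h$ to $\Delta$ from Lemma~\ref{lem:folding}, selecting precisely the cocharacter lattice $\bLambda_G$ of the adjoint group $G=G_{\rm ad}(\Delta)$. Rewriting the $W$-equivariant local system on $C$ in terms of the cover yields the fibrewise identification
\begin{gather*}
\VH^{\rm CY}_b\;\cong\;\big(H^1\big(\tilde{C}_b,\Z\big)\otimes \bLambda_G\big)^W\;=\;\VH^H_b,
\end{gather*}
and the $(1,1)$-computation matches the filtration $F^2H^3(X_b)^{\Cc}$ with $F^1\VH^H$. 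Assembling these over $\Bbo$ gives $\VH^{\rm CY}\cong\VH^H$.

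For step~(ii) I would compute the image of $\blambda_{\rm CY}$ as a Poincar\'e residue. The class $\blambda_{\rm CY}(b)$ is built from the $\Cc$-invariant volume form $s_b\in H^0(X_b,K_{X_b})$ of Proposition~\ref{prop:cy3s}, which is the fibrewise Slodowy volume form wedged with a form pulled back from $C$ through the spin bundle $L$; taking its Leray residue along the vanishing cycles pairs the volume form with the eigenvalue data and reproduces the restriction to $\tilde{C}_b$ of the tautological section of $K_C\otimes\tfr$, that is, $\lambda_b$. Hence $\blambda_{\rm CY}$ corresponds to $\blambda_{\rm SW}$, and Proposition~\ref{thm:abSW} upgrades the isomorphism of families of abelian varieties to an isomorphism of algebraic integrable systems over $\Bbo$. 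The main obstacle is step~(i): identifying $R^2 f_{b*}\underline{\Z}$ with the cameral local system \emph{integrally} and $\Cc$-equivariantly, which requires the precise Brieskorn--Slodowy description of the simultaneous resolution, of the $W$-action by vanishing-cycle monodromy, and of how the $\Cc$-action folds the root lattice onto the cocharacter lattice $\bLambda_{G_{\rm ad}}$; controlling the integral (not merely rational) structure and the correct adjoint-type lattice is exactly what pins down the statement. By comparison, the residue matching in step~(ii) is a routine local computation.
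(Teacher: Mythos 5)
Your proposal follows the same skeleton as the paper's proof: realize both sides via Proposition~\ref{thm:abSW}, construct an isomorphism of polarizable $\Z$-VHS between the ($\Cc$-invariant) Calabi--Yau VHS and the cameral VHS $\VH^H$, check that it intertwines $\blambda_{\rm CY}$ with the Hitchin Seiberg--Witten differential, and then conclude. Where you genuinely diverge is in how the VHS isomorphism is established. The paper invokes M.~Saito's theory of mixed Hodge modules to carry out the comparison directly in families over $\Bbo$, whereas you argue fiberwise: Leray for the surface fibration $X_b\to C$, Brieskorn--Slodowy identification of $H^2$ of the Slodowy fibers with the (co)root/cocharacter lattice, Picard--Lefschetz monodromy producing the $W$-action, and $\Cc$-invariants implementing the folding --- essentially the Diaconescu--Donagi--Pantev route, which is indeed how the fiberwise ADE case was first proved. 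What the paper's MHM formalism buys is precisely what your phrase ``assembling these over $\Bbo$'' conceals: that the fiberwise identifications vary holomorphically, are flat for the Gauss--Manin connections, and respect the Hodge filtrations in the relative setting, i.e.\ that one gets an isomorphism of \emph{variations} rather than a collection of isomorphisms of Hodge structures; your approach buys geometric transparency and makes the lattice/monodromy content visible, and you correctly single out the integral structure and the adjoint-type lattice $\bLambda_{G_{\rm ad}}$ as the delicate point, just as the paper does. One local error: your parenthetical justification for purity --- ``the vanishing cycles are algebraic $(-2)$-spheres'' --- is wrong, since vanishing cycles in these affine surfaces are Lagrangian, not holomorphic, and an affine surface contains no compact curves at all; the purity and $(1,1)$-type of $H^2$ of the Slodowy fibers instead follow from a compactification with rational boundary (or from the weight arguments that the MHM framework packages), so this step needs a different justification than the one you give, though the claim itself is true.
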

If $\Delta=\Delta_h$ is an irreducible $\ADE$-Dynkin diagram, then the fiberwise version of Theorem~\ref{thm:nccyhit} appeared in~\cite{DDP} (see~\cite{DDD} for~$\mathrm{A}_1$).
\begin{proof}[Sketch of proof] Let $\VH^{\rm CY}$ and $\VH^H$ be the $\Z$-VHS of weight $1$ determined by $\pib^\circ\colon \X^\circ\to \Bbo$ and $\Hit^\circ\colon \Hig(C,G)^{\circ}\to \Bbo$ respectively. We denote by $\VH^{\rm CY}_{\Cc}\subset \VH^{\rm CY}$ the $\Cc$-invariant part. From our preparatory work, we know that both $\VH^{\rm CY}_{\Cc}$ and $\VH^H$ admit abstract Seiberg--Witten differentials $\blambda_{\rm CY}$ and $\blambda_{H}$ respectively. Using M.~Saito's mixed Hodge modules~\cite{SaitoMixedHodgeModules}, which are a~vast generalization of variations of Hodge structures, we prove an isomorphism $\Psi\colon \VH^{\rm CY}_{\Cc}\rightarrow \VH^H$
of polarizable $\Z$-VHS. Here are a subtle point is to match the integral structures. In yet another step, we prove that $\Psi$ intertwines $\blambda_{\rm CY}$ with $\blambda_{H}$. An application of Proposition \ref{thm:abSW} then implies that $\mathcal{J}^2_{\Cc}(\X^\circ/\Bbo)\to \Bbo$ and $\Hig^\circ(C,G)\to \Bbo$ are isomorphic as algebraic integrable systems over $\Bbo$.
\end{proof}

By working with compactly supported cohomology instead of ordinary cohomology, we obtain Theorem~\ref{thm:nccyhit} for the Langlands dual group~$^L G$ of~$G$, see~\cite{Beck}. In particular, this proves Theorem~\ref{thm:nccyhit} for all simply-connected simple complex Lie groups. Applying Poincar\'e duality (more precisely Poincar\'e--Verdier duality, see \cite[Chapter~3]{KashiwaraSchapira}, we conclude that
\begin{gather*}
\mathcal{J}_{\Cc}^2(\Xo/\Bbo)\cong \mathcal{J}_2^{\Cc}(\Xo/\Bbo)^\vee
\end{gather*}
over $\Bbo$. Combined with Theorem \ref{thm:nccyhit} this gives a proof of the Langlands duality statement
\begin{gather*}
\Hig^\circ(C,G)\cong \Hig^\circ\big(C,^L G\big)^\vee
\end{gather*}
over $\Bbo$ which is independent of \cite{DP}.

Finally, in \cite{Beck2} we prove a geometric version of Theorem~\ref{thm:nccyhit} by using Calabi--Yau orbifolds, i.e., global quotient stacks with trivial canonical class. In that way taking $\Cc$-invariants is already `built in'.

\subsection{Open questions}
Even though the basic relationship between Calabi--Yau integrable and Hitchin systems has been established, there are many open questions. We close this article with two of them.
\begin{itemize}\itemsep=0pt
\item (Geometric objects on CY3s) There is no immediate modular interpretation of the intermediate Jacobians $J^2(X_b)$, $b\in \Bbo$.
Hence a natural question is, are there geometric objects on $X_b$, $b\in \Bbo$, that explicitly correspond to Higgs bundles on~$C$.
\item (Branes) As mentioned in the introduction, the total space $\Hig(C,G)$ of a $G$-Hitchin system is a Hyperk\"ahler manifold. In particular, it carries three complex structures $I_k$ as well as three corresponding symplectic/K\"ahler forms $\omega_k$ ($k=1,2,3$).

If we fix $k$, then a submanifold $L\subset \Hig(C,G)$ is called an $A/B$-brane if $L$ is a Lagrangian/complex submanifold with respect to $I_k$\footnote{This terminology orginated in physics (see, e.g.,~\cite{KapustinWitten}). The actual `physical branes' carry additional data.}. A submanifold maybe Lagrangian or complex with respect to any of the three complex structures. Therefore it makes sense to speak of $(B,B,B)$-, $(B,A,A)$-, $(A,B,A)$- and $(A,A,B)$-branes. A very active area of research is to find such branes (see \cite[Section~5]{AndersonEtAlAIM} for an overview). Is it possible to construct known or new branes in $\Hig(C,G)$ using the Calabi--Yau threefolds $\X\to \Bb(C,G)$?
\end{itemize}

\subsection{Exercises}

\begin{exr}\label{exr:SloFormula}Show formula (\ref{SloFormulaFolding}) in the proof of Lemma \ref{lem:folding}.
\end{exr}

\begin{exr}Let $\gfr_n=\mathfrak{sl}(n,\C)$ with its standard Cartan subalgebra $\tfr_n\subset \gfr_n$ of diagonal matrices.
\begin{enumerate}[label=\alph*)]\itemsep=0pt
\item 
Give an example of a Slodowy slice $S_n\subset \gfr_n$.
\item Let $S_3\subset \gfr_3=\mathfrak{sl}(3,\C)$ be a Slodowy slice. Use Exercise~\ref{exr:invariants} to check directly that the fiber of $\sigma\colon S_3\to \tfr_3/W \cong \C^2$ over $0$ is isomorphic to the $\mathrm{A}_2$-singularity $u^3-vw=0$.
\end{enumerate}
\end{exr}

\begin{exr}Let $\gfr$ be a semisimple complex Lie group and $(x,y,h)$ a \emph{regular} nilpotent element. Prove that the `regular Slodowy slice' $S_{\rm reg}=x+\ker \operatorname{ad}(y)\subset \gfr$, which is more commonly known as Kostant slice, is a section of the adjoint quotient $\chi\colon \gfr\to \tfr/W$. In particular, $\chi_{|S_{\rm reg}}\colon S_{\rm reg}\to \tfr/W$ is an isomorphism.
\end{exr}

\subsection*{Acknowledgements}
It is a pleasure to thank Lara Anderson and Laura Schaposnik for the invitation to contribute to the SIGMA Special Issue on Geometry and Physics of Hitchin Systems and to give lectures on the topics of these notes at the `Workshop on the Geometry and Physics of Higgs bundles IV' on March 16--17, 2019. Moreover, I~thank Murad Alim, Aswin Balasubramanian, Peter Dalakov and Martin Vogrin for comments on the first draft of these notes. This work is financially supported by the NSF grant ``NSF CAREER Award DMS 1749013'', the Simons Center for Geometry and Physics and the DFG Emmy-Noether grant on ``Building blocks of physical theories from the geometry of quantization and BPS states'', number AL 1407/2-1.

\pdfbookmark[1]{References}{ref}
\LastPageEnding


\begin{thebibliography}{99}
\footnotesize\itemsep=0pt

\bibitem{AndersonEtAlAIM}
Anderson L.B., Esole M., Fredrickson L., Schaposnik L.P., Singular geometry and
 {H}iggs bundles in string theory, \href{https://doi.org/10.3842/SIGMA.2018.037}{\textit{SIGMA}} \textbf{14} (2018), 037,
 27~pages, \href{https://arxiv.org/abs/1710.08453}{arXiv:1710.08453}.

\bibitem{Tbranes}
Anderson L.B., Heckman J.J., Katz S., T-branes and geometry, \href{https://doi.org/10.1007/JHEP05(2014)080}{\textit{J.~High
 Energy Phys.}} \textbf{2014} (2014), no.~5, 080, 68~pages, \href{https://arxiv.org/abs/1310.1931}{arXiv:1310.1931}.

\bibitem{Tbranes2}
Anderson L.B., Heckman J.J., Katz S., Schaposnik L.P., T-branes at the limits
 of geometry, \href{https://doi.org/10.1007/jhep10(2017)058}{\textit{J.~High Energy Phys.}} \textbf{2017} (2017), no.~10, 058,
 56~pages, \href{https://arxiv.org/abs/1702.06137}{arXiv:1702.06137}.

\bibitem{Beck-thesis}
Beck F., Hitchin and {C}alabi--{Y}au integrable systems, Ph.D.~Thesis,
 \href{https://doi.org/10.6094/UNIFR/11668}{Albrecht-Ludwigs Universit\"at Freiburg}, 2016.

\bibitem{Beck}
Beck F., Hitchin and {C}alabi--{Y}au integrable systems via variations of
 {H}odge structures, \href{https://arxiv.org/abs/1707.05973}{arXiv:1707.05973}.

\bibitem{Beck2}
Beck F., Calabi--{Y}au orbifolds over {H}itchin bases, \href{https://doi.org/10.1016/j.geomphys.2018.10.010}{\textit{J.~Geom. Phys.}}
 \textbf{136} (2019), 14--30, \href{https://arxiv.org/abs/1807.05134}{arXiv:1807.05134}.

\bibitem{BLTori}
Birkenhake C., Lange H., Complex tori, \href{https://doi.org/10.1007/978-1-4612-1566-0}{\textit{Progress in Mathematics}}, Vol.~177, Birkh\"{a}user Boston, Inc., Boston, MA, 1999.

\bibitem{BLAbelian}
Birkenhake C., Lange H., Complex abelian varieties, 2nd ed., \href{https://doi.org/10.1007/978-3-662-06307-1}{\textit{Grundlehren der
 Mathematischen Wissenschaften}}, Vol.~302, Springer-Verlag, Berlin, 2004.

\bibitem{Bogomolov}
Bogomolov F.A., K\"{a}hler manifolds with trivial canonical class, \textit{Izv.
 Akad. Nauk SSSR Ser. Mat.} \textbf{38} (1974), 11--21.

\bibitem{Bri}
Brieskorn E., Singular elements of semi-simple algebraic groups, in Actes du
 {C}ongr\`es {I}nternational des {M}ath\'{e}maticiens ({N}ice, 1970), {T}ome~2, Gauthier-Villars, Paris, 1971, 279--284.

\bibitem{ClemensGriffiths}
Clemens C.H., Griffiths P.A., The intermediate {J}acobian of the cubic
 threefold, \href{https://doi.org/10.2307/1970801}{\textit{Ann. of Math.}} \textbf{95} (1972), 281--356.

\bibitem{CollingwoodMcGovern}
Collingwood D.H., McGovern W.M., Nilpotent orbits in semisimple {L}ie algebras,
 \textit{Van Nostrand Reinhold Mathematics Series}, Van Nostrand Reinhold Co., New
 York, 1993.

\bibitem{DalakovLectures}
Dalakov P., Lectures on {H}iggs moduli and abelianisation, \href{https://doi.org/10.1016/j.geomphys.2017.01.001}{\textit{J.~Geom.
 Phys.}} \textbf{118} (2017), 94--125, \href{https://arxiv.org/abs/1609.00646}{arXiv:1609.00646}.

\bibitem{delAngelMS}
del Angel P.L., M\"{u}ller-Stach S., Picard--{F}uchs equations, integrable
 systems and higher algebraic {$K$}-theory, in Calabi--{Y}au Varieties and
 Mirror Symmetry ({T}oronto, {ON}, 2001), \textit{Fields Inst. Commun.},
 Vol.~38, Amer. Math. Soc., Providence, RI, 2003, 43--55,
 \href{https://arxiv.org/abs/math.AG/0207196}{math.AG/0207196}.

\bibitem{DeligneII}
Deligne P., Th\'{e}orie de {H}odge.~{II}, \href{https://doi.org/10.1007/BF02684692}{\textit{Inst. Hautes \'{E}tudes Sci.
 Publ. Math.}} (1971), 5--57.

\bibitem{DDD}
Diaconescu D.E., Dijkgraaf R., Donagi R., Hofman C., Pantev T., Geometric
 transitions and integrable systems, \href{https://doi.org/10.1016/j.nuclphysb.2006.04.016}{\textit{Nuclear Phys.~B}} \textbf{752}
 (2006), 329--390, \href{https://arxiv.org/abs/hep-th/0506196}{hep-th/0506196}.

\bibitem{DDP}
Diaconescu D.E., Donagi R., Pantev T., Intermediate {J}acobians and {$ADE$}
 {H}itchin systems, \href{https://doi.org/10.4310/MRL.2007.v14.n5.a3}{\textit{Math. Res. Lett.}} \textbf{14} (2007), 745--756,
 \href{https://arxiv.org/abs/hep-th/0607159}{hep-th/0607159}.

\bibitem{Donagi2}
Donagi R., Decomposition of spectral covers, \textit{Ast\'{e}risque}
 \textbf{218} (1993), 145--175.

\bibitem{Donagi}
Donagi R., Spectral covers, in Current Topics in Complex Algebraic Geometry
 ({B}erkeley, {CA}, 1992/93), \textit{Math. Sci. Res. Inst. Publ.}, Vol.~28,
 Cambridge University Press, Cambridge, 1995, 65--86,
 \href{https://arxiv.org/abs/alg-geom/9505009}{alg-geom/9505009}.

\bibitem{DonagiSW}
Donagi R., Seiberg--{W}itten integrable systems, in Algebraic Geometry~--
 {S}anta {C}ruz 1995, \textit{Proc. Sympos. Pure Math.}, Vol.~62, Amer. Math.
 Soc., Providence, RI, 1997, 3--43, \href{https://arxiv.org/abs/alg-geom/9705010}{alg-geom/9705010}.

\bibitem{DG}
Donagi R., Gaitsgory D., The gerbe of {H}iggs bundles, \href{https://doi.org/10.1007/s00031-002-0008-z}{\textit{Transform.
 Groups}} \textbf{7} (2002), 109--153, \href{https://arxiv.org/abs/math.AG/0005132}{math.AG/0005132}.

\bibitem{DM1}
Donagi R., Markman E., Cubics, integrable systems, and {C}alabi--{Y}au
 threefolds, in Proceedings of the {H}irzebruch 65 {C}onference on {A}lgebraic
 {G}eometry ({R}amat {G}an, 1993), \textit{Israel Math. Conf. Proc.}, Vol.~9,
 Bar-Ilan University, Ramat Gan, 1996, 199--221, \href{https://arxiv.org/abs/alg-geom/9408004}{alg-geom/9408004}.

\bibitem{DM2}
Donagi R., Markman E., Spectral covers, algebraically completely integrable,
 {H}amiltonian systems, and moduli of bundles, in Integrable Systems and
 Quantum Groups ({M}ontecatini {T}erme, 1993), \href{https://doi.org/10.1007/BFb0094792}{\textit{Lecture Notes in
 Math.}}, Vol.~1620, Springer, Berlin, 1996, 1--119, \href{https://arxiv.org/abs/alg-geom/9507017}{alg-geom/9507017}.

\bibitem{DP}
Donagi R., Pantev T., Langlands duality for {H}itchin systems, \href{https://doi.org/10.1007/s00222-012-0373-8}{\textit{Invent.
 Math.}} \textbf{189} (2012), 653--735, \href{https://arxiv.org/abs/math.AG/0604617}{math.AG/0604617}.

\bibitem{duVal}
Du~Val P., Homographies, quaternions and rotations, \textit{Oxford Mathematical
 Monographs}, Clarendon Press, Oxford, 1964.

\bibitem{EsnaultViehweg}
Esnault H., Viehweg E., Deligne--{B}e\u{\i}linson cohomology, in
 Be\u{\i}linson's Conjectures on Special Values of {$L$}-Functions,
 \textit{Perspect. Math.}, Vol.~4, Academic Press, Boston, MA, 1988, 43--91.

\bibitem{Faltings}
Faltings G., Stable {$G$}-bundles and projective connections,
 \textit{J.~Algebraic Geom.} \textbf{2} (1993), 507--568.

\bibitem{GriffithsI}
Griffiths P.A., Periods of integrals on algebraic manifolds.~{I}.
 {C}onstruction and properties of the modular varieties, \href{https://doi.org/10.2307/2373545}{\textit{Amer.~J.
 Math.}} \textbf{90} (1968), 568--626.

\bibitem{HauselThaddeus}
Hausel T., Thaddeus M., Mirror symmetry, {L}anglands duality, and the {H}itchin
 system, \href{https://doi.org/10.1007/s00222-003-0286-7}{\textit{Invent. Math.}} \textbf{153} (2003), 197--229,
 \href{https://arxiv.org/abs/math.AG/0205236}{math.AG/0205236}.

\bibitem{Hit2}
Hitchin N., Stable bundles and integrable systems, \href{https://doi.org/10.1215/S0012-7094-87-05408-1}{\textit{Duke Math.~J.}}
 \textbf{54} (1987), 91--114.

\bibitem{Hit1}
Hitchin N.J., The self-duality equations on a {R}iemann surface, \href{https://doi.org/10.1112/plms/s3-55.1.59}{\textit{Proc.
 London Math. Soc.}} \textbf{55} (1987), 59--126.

\bibitem{Humphreys}
Humphreys J.E., Introduction to {L}ie algebras and representation theory, 2nd ed.,
 \href{https://doi.org/10.1007/978-1-4612-6398-2}{\textit{Graduate Texts in Mathe\-matics}}, Vol.~9, Springer-Verlag, New
 York~-- Berlin, 1978.

\bibitem{KapustinWitten}
Kapustin A., Witten E., Electric-magnetic duality and the geometric {L}anglands
 program, \href{https://doi.org/10.4310/CNTP.2007.v1.n1.a1}{\textit{Commun. Number Theory Phys.}} \textbf{1} (2007), 1--236,
 \href{https://arxiv.org/abs/hep-th/0604151}{hep-th/0604151}.

\bibitem{KashiwaraSchapira}
Kashiwara M., Schapira P., Sheaves on manifolds, \href{https://doi.org/10.1007/978-3-662-02661-8}{\textit{Grundlehren der
 Mathematischen Wissenschaften}}, Vol.~292, Springer-Verlag, Berlin, 1990.

\bibitem{KostantReps}
Kostant B., Lie group representations on polynomial rings, \href{https://doi.org/10.2307/2373130}{\textit{Amer.~J.
 Math.}} \textbf{85} (1963), 327--404.

\bibitem{McKay}
McKay J., Graphs, singularities, and finite groups, in The {S}anta {C}ruz
 {C}onference on {F}inite {G}roups ({U}niv. {C}alifornia, {S}anta {C}ruz,
 {C}alif., 1979), \textit{Proc. Sympos. Pure Math.}, Vol.~37, Amer. Math.
 Soc., Providence, R.I., 1980, 183--186.

\bibitem{SaitoMixedHodgeModules}
Saito M., Mixed {H}odge modules, \href{https://doi.org/10.2977/prims/1195171082}{\textit{Publ. Res. Inst. Math. Sci.}}
 \textbf{26} (1990), 221--333.

\bibitem{Sco1}
Scognamillo R., An elementary approach to the abelianization of the {H}itchin
 system for arbitrary reductive groups, \href{https://doi.org/10.1023/A:1000235107340}{\textit{Compositio Math.}} \textbf{110}
 (1998), 17--37, \href{https://arxiv.org/abs/alg-geom/9412020}{alg-geom/9412020}.

\bibitem{Slo}
Slodowy P., Simple singularities and simple algebraic groups, \href{https://doi.org/10.1007/BFb0090294}{\textit{Lecture
 Notes in Math.}}, Vol.~815, Springer, Berlin, 1980.

\bibitem{Sz1}
Szendr\H{o}i B., Artin group actions on derived categories of threefolds,
 \href{https://doi.org/10.1515/crll.2004.047}{\textit{J.~Reine Angew. Math.}} \textbf{572} (2004), 139--166,
 \href{https://arxiv.org/abs/math.AG/0210121}{math.AG/0210121}.

\bibitem{Tian}
Tian G., Smoothness of the universal deformation space of compact
 {C}alabi--{Y}au manifolds and its {P}etersson--{W}eil metric, in Mathematical
 Aspects of String Theory ({S}an {D}iego, {C}alif., 1986), \href{https://doi.org/10.1142/9789812798411_0029}{\textit{Adv. Ser.
 Math. Phys.}}, Vol.~1, World Sci. Publishing, Singapore, 1987, 629--646.

\bibitem{Todorov}
Todorov A.N., The {W}eil--{P}etersson geometry of the moduli space of {${\rm
 SU}(n\geq 3)$} ({C}alabi--{Y}au) manifolds.~{I}, \href{https://doi.org/10.1007/BF02125128}{\textit{Comm. Math. Phys.}}
 \textbf{126} (1989), 325--346.

\end{thebibliography}
\end{document}